\crefname{thm}{Theorem}{Theorems}
\crefname{pro}{Proposition}{Propositions}
\crefname{lem}{Lemma}{Lemmas}
\crefname{rmk}{Remark}{Remarks}
\crefname{cor}{Corollary}{Corollaries}
\crefname{dfn}{Definition}{Definitions}
\crefname{ex}{Example}{Examples}
\crefname{section}{Section}{Sections}
\crefname{subsection}{Subsection}{Subsections}
\newcommand{\as}{{\rm d}\mathbb{P}\times{\rm d} t-a.e.}
\newcommand{\F}{\mathcal{F}}
\newcommand{\E}{\mathbb{E}}
\newcommand{\T}{[0,T]}
\newcommand{\R}{{\mathbb R}}
\newcommand{\RE}{\forall}
\newcommand {\Dis}{\displaystyle}
\newtheorem{thm}{Theorem}[section]
\newtheorem{lem}[thm]{Lemma}
\newtheorem{pro}[thm]{Proposition}
\newtheorem{rmk}[thm]{Remark}
\newtheorem{cor}[thm]{Corollary}
\newtheorem{dfn}[thm]{Definition}
\newtheorem{ex}[thm]{Example}
\journal{arXiv}
\begin{document}
\begin{frontmatter}

\title{{Weighted solutions of random time horizon BSDEs with stochastic monotonicity and general growth generators and related PDEs}\tnoteref{found}}
\tnotetext[found]{Partially supported by National Natural Science Foundation of China (No. 12171471).
\vspace{0.2cm}}

\author{{Xinying Li \qquad Yaqi Zhang}\qquad Shengjun Fan$^{*}$ \vspace{0.3cm} \\ \textit{School of Mathematics, China University of Mining and Technology, Xuzhou 221116, PR China \vspace{-0.5cm}}}

\cortext[cor1]{Corresponding author\vspace{0.2cm}}

\cortext[cor2]{E-mail: lixinyingcumt@163.com (X. Li), TS22080028A31@cumt.edu.cn (Y. Zhang), shengjunfan@cumt.edu.cn (S. Fan)}

\vspace{0.2cm}
\begin{abstract}
This study focuses on a multidimensional backward stochastic differential equation (BSDE) with a general random terminal time $\tau$ taking values in $[0,+\infty]$. The generator $g$ satisfies a stochastic monotonicity condition in the first unknown variable $y$ and a stochastic Lipschitz continuity condition in the second unknown variable $z$, and it can have a more general growth with respect to $y$ than the classical one stated in (H5) of \cite{Briand2003}. Without imposing any restriction of finite moment on the stochastic coefficients, we establish a general existence and uniqueness result for the weighted solution of such BSDE in a proper weighted $L^2$-space with a suitable weighted factor. This result is proved via some innovative ideas and delicate analytical techniques, and it unifies and strengthens some existing works on BSDEs with stochastic monotonicity generators, BSDEs with stochastic Lipschitz generators, and BSDEs with deterministic Lipschitz/monotonicity generators. Then, a continuous dependence property and a stability theorem for the weighted $L^2$-solutions are given. We also derive the nonlinear Feynman-Kac formulas for both parabolic and elliptic PDEs in our context. \vspace{0.2cm}
\end{abstract}

\begin{keyword}
Backward stochastic differential equation\sep Weighted solution\sep Existence and uniqueness\sep \\
\hspace*{1.85cm} Random terminal time \sep Stochastic monotonicity condition\sep \\
\hspace*{1.85cm} Stochastic Lipschitz condition \sep Nonlinear Feynman-Kac formulas\vspace{0.2cm}

\MSC[2021] 60H10, 60H30\vspace{0.2cm}
\end{keyword}

\end{frontmatter}
\vspace{-0.4cm}

\section{Introduction}
Linear backward stochastic differential equation (BSDE in short) appeared in \cite{Bismut1973} as an adjoint process in the maximum principle of stochastic control, and nonlinear BSDE was initially studied in \cite{PardouxPeng1990SCL} laying the foundation on the study of the BSDE theory and its applications. During the past over thirty years, BSDEs have been intensively investigated with broader applications, and they have gradually become a powerful tool in various fields including PDEs, mathematical finance, optimal control and so on, see for example \cite{Peng1991SSR}, \cite{Tang1994}, \cite{DarlingandPardoux1997}, \cite{KarouiPengQuenez1997}, \cite{HuPeng1997}, \cite{BriandandHu1998}, \cite{Pardoux1999}, \cite{HuYing2005}, \cite{BriandandConfortola2008}, \cite{DelbaenTang2010}, \cite{PardouxandRascanu(2014)}, \cite{Bahlali2015}, \cite{FanHu2021SPA} and \cite{Tian2023SIAM} among others for more details.

In this study, we are concerned with the following typical multidimensional BSDE:
\begin{align}\label{BSDE1.1}
  y_t=\xi+\int_t^\tau g(s,y_s,z_s){\rm d}s-\int_t^\tau z_s{\rm d}B_s, \ \ t\in[0,\tau],
\end{align}
where $(B_t)_{t\geq0}$ is a standard $d$-dimensional Brownian motion defined on a complete probability space $(\Omega,\F,\mathbb{P})$ and generates an augmented $\sigma$-algebra filtration $(\F_t)_{t\geq0}$ with $\F:=\F_\tau$ and $\tau$ being a general $(\F_t)$-stopping time taking values in $[0,+\infty]$ called the terminal time, $\xi$ is an $\F_\tau$-measurable $k$-dimensional random vector called the terminal value, and the random function
$$
g(\omega,t,y,z): \Omega\times[0,\tau]\times \R^k\times \R^{k\times d}\mapsto \R^k
$$
is $(\F_t)$-progressively measurable for each $(y,z)$ called the generator of BSDE \eqref{BSDE1.1}. BSDE \eqref{BSDE1.1} with parameters $(\xi,\tau,g)$ is often denoted as BSDE$(\xi,\tau,g)$. An adapted solution of BSDE \eqref{BSDE1.1} is a pair of $(\F_t)$-progressively measurable processes $(y_t,z_t)_{t\in [0,\tau]}$ taking values in $\R^k\times\R^{k\times d}$ such that $\mathbb{P}-a.s.$, $Y_t$ is continuous, $\int_0^\tau (|g(t,y_t,z_t)|+|z_t|^2){\rm d}t<+\infty$, and \eqref{BSDE1.1} holds. Notably, for any adapted solution $(y_t,z_t)_{t\in[0,\tau]}$ of BSDE \eqref{BSDE1.1}, we have
\begin{align}\label{eq:1.2}
\lim\limits_{t\rightarrow \tau} y_t=\xi\ \ {\rm on\  the\ set\ of}\ \{\tau=+\infty\}.
\end{align}
The following BSDE is closely related to BSDE \eqref{BSDE1.1}: for each $T>0$,
\begin{equation}\label{BSDE1.1*}
\left\{
\begin{array}{l}
\Dis y_t=y_{T\wedge\tau}+\int_{t\wedge\tau}^{T\wedge\tau} g(s,y_s,z_s){\rm d}s-\int_{t\wedge\tau}^{T\wedge\tau} z_s{\rm d}B_s, \ \ t\in [0,T];\vspace{0.2cm}\\
\Dis y_t=\xi\ \ {\rm on\  the\ set\ of}\ \{t\geq \tau\}.
\end{array}
\right.
\end{equation}
By a solution of BSDE \eqref{BSDE1.1*}, we mean a pair of $(\F_t)$-progressively measurable processes $(y_t,z_t)_{t\geq 0}$ taking values in $\R^k\times\R^{k\times d}$ such that $\mathbb{P}-a.s.$, $Y_{t\wedge T}$ is continuous, ${\bf 1}_{t\geq \tau}z_t\equiv 0$, $\int_0^{T\wedge\tau} (|g(t,y_t,z_t)|+|z_t|^2){\rm d}t<+\infty$, and \eqref{BSDE1.1*} holds for each $T>0$. This type of BSDE was intensively investigated for example in \cite{DarlingandPardoux1997}, \cite{BriandandHu1998}, \cite{Pardoux1999}, \cite{Royer2004}, \cite{Aman2012}, \cite{PardouxandRascanu(2014)}, \cite{O2020} and \cite{Lin2020}. We would emphasize that if $\tau$ is finite stopping time, i.e., $\mathbb{P}(\tau<+\infty)=1$, then BSDE \eqref{BSDE1.1*} is equivalent to BSDE \eqref{BSDE1.1}. However, if $\mathbb{P}(\tau=+\infty)>0$, then \eqref{BSDE1.1*} indicates that the $y_\cdot$ has nothing to do with the terminal value $\xi$ on the set $\{\tau=+\infty\}$, having a significant difference with \eqref{eq:1.2}. In order to obtain an adapted solution of BSDE \eqref{BSDE1.1*} with unbounded terminal time $\tau$, i.e., $\mathbb{P}(\tau\geq M)>0$ for any $M>0$, a natural strategy is to use the solutions of approximated BSDEs with constant terminal time to approach the desired solution. More specifically, for each $n\geq1$, let $(y_t^n,z_t^n)_{t\geq0}$ be an adapted solution of the following BSDE:
\begin{equation}\label{eq:1.4}
\left\{
\begin{array}{l}
\Dis y_t^n=\xi_n+\int_{t}^{n} {\bf 1}_{s\leq \tau}g(s,y_s^n,z_s^n){\rm d}s-\int_{t}^{n} z_s^n{\rm d}B_s, \ \ 0\leq t\leq n; \\
\Dis y_t^n=\xi_t\ \ {\rm and}\ \ z_t^n=\eta_t,\ \ t\geq n
\end{array}
\right.
\end{equation}
with two processes $\xi_\cdot$ and $\eta_\cdot$ such that $\xi=\xi_t+\int_t^{\infty}\eta_s{\rm d}B_s$ for each $t\geq 0$ via the martingale representation theorem. Then, $\{(y_t^n,z_t^n)_{t\geq0}\}_{n=1}^{+\infty}$ is shown to be a Cauchy sequence in a proper space under some additional integrability conditions on $g(t,\xi_t,\eta_t)$, and the limit is a desired solution of BSDE \eqref{BSDE1.1*}.

Due to \eqref{eq:1.2}, it seems to be more natural to study BSDE \eqref{BSDE1.1} than BSDE \eqref{BSDE1.1*}. On the other hand, it also seems to be more convenient to unify those works on BSDEs with the bounded terminal time, unbounded terminal time and infinite terminal time via the study of BSDE \eqref{BSDE1.1} since it has a unified form regardless of $\tau(\omega)$ being finite or infinite. Let us quickly illustrate what we want to do in this study. It is well known that by combining the martingale representation theorem with the contract mapping argument, \cite{PardouxPeng1990SCL} initially investigated existence and uniqueness of the adapted solution for BSDE \eqref{BSDE1.1} with constant terminal time under the uniform Lipschitz continuity assumption of the generator $g$ with respect to the unknown variables $(y,z)$: there exists two nonnegative constants $\mu$ and $\nu$ such that for each $y,y_1, y_2\in\R^k$ and $z,z_1, z_2\in\R^{k\times d}$,
\begin{align}\label{a1.2}
|g(\omega,t,y_1,z)-g(\omega,t,y_2,z)|\leq \mu |y_1-y_2|\vspace{-0.2cm}
\end{align}
and\vspace{-0.2cm}
\begin{align}\label{a1.3}
|g(\omega,t,y,z_1)-g(\omega,t,y,z_2)|\leq \nu |z_1-z_2|.
\end{align}
Based on this result, by virtue of the convolution approaching idea and the weak convergence method together with the a priori estimate technique, the uniform Lipschitz continuity condition \eqref{a1.2} of the generator $g$ in $y$ was successfully weakened in \cite{Pardoux1999} to the following monotonicity condition
 \begin{align}\label{a1.333}
\left\langle y_1-y_2,g(\omega,t,y_1,z)-g(\omega,t,y_2,z)\right\rangle\leq \mu|y_1-y_2|^2,
\end{align}
combined with a general growth condition: there exists a continuous increasing function $\varphi:\R_+\rightarrow\R_+$ with $\varphi(0)=0$ such that for each $y\in\R^k$ and $z \in\R^{k\times d}$
\begin{align}\label{a1.5}
|g(\omega,t,y,0)|\leq |g(\omega,t,0,0)|+\varphi(|y|).
\end{align}
This growth condition extends the polynomial growth condition used in \cite{BriandCarmona2000} and it was further loosed the classical general growth condition: for each $r\in \R_+$,
\begin{align}\label{a1.6}
\sup_{|y|\leq r}|g(\cdot,y,0)-g(\cdot,0,0)|\in L^1(\Omega\times \T),
\end{align}
see \cite{Briand2003} for more details, where a delicate truncation argument for the generator $g$ is used to make fully use of the general growth condition \eqref{a1.6}. In this study, we are interested in solving BSDEs with a general random terminal time under more general assumptions on the generator $g$, roughly speaking, a BSDE with stochastic coefficients, and aim to advance the theory and applications of BSDEs. It should be mentioned that BSDEs with stochastic coefficients arise naturally in mathematical finance, stochastic control and some other topics, as indicated for example in \cite{KarouiPengQuenez1997}, \cite{BriandandConfortola2008} and \cite{Bahlali2015}. More specifically, we will study BSDE \eqref{BSDE1.1} with random terminal time and suppose that the generator $g$ satisfies the stochastic monotonicity condition in $y$ and the stochastic Lipschitz continuity condition in $z$, i.e., \eqref{a1.333} and \eqref{a1.3} hold with two nonnegative processes $\mu_\cdot$ and $\nu_\cdot$ instead of the constants $\mu$ and $\nu$, and it can have a more general growth in $y$ than that stated in \eqref{a1.6}. The coefficients $\mu_\cdot$ and $\nu_\cdot$ are only assumed to satisfy the condition of
\begin{align}\label{a1.7}
\int_0^{\tau(\omega)} (\mu_t(\omega)+\nu_t^2(\omega)){\rm d}t<+\infty.
\end{align}

Next, let us further review some existing results related closely to ours and enlightened our idea. First of all, by subdividing the time interval via stopping times and using the martingale representation theorem and the contract mapping argument, \cite{Liu2020} established existence and uniqueness for the adapted solutions of infinite time horizon BSDE \eqref{BSDE1.1}, where the generator $g$ satisfies the stochastic Lipschitz continuity condition in $(y,z)$, i.e., \eqref{a1.2} and \eqref{a1.3} hold with the constants $\mu$ and $\nu$ being replaced with two nonnegative processes $\mu_\cdot$ and $\nu_\cdot$, and it is also required that
\begin{align}\label{a1.8}
\int_0^{\tau(\omega)} (\mu_t(\omega)+\nu^2_t(\omega)){\rm d}t\leq M
\end{align}
for a constant $M>0$. Readers are also referred to \cite{Chen1998} and \cite{ZChenBWang2000JAMS} for the case that the processes $\mu_\cdot$ and $\nu_\cdot$ do not depend on the sample $\omega$. \cite{Yong2006} considered the solvability of linear BSDEs with constant terminal time $T$ and unbounded stochastic coefficients $\mu_\cdot$ and $\nu_\cdot$, where $\int_0^{T}\mu_t(\omega) {\rm d}t$ and $\int_0^{T}\nu_t^2(\omega) {\rm d}t$ are assumed to have a certain exponential moment. BSDEs with random terminal time and stochastic Lipschitz continuity generators were also investigated by \cite{KarouiHuang1997}, \cite{BenderKohlmann2000}, \cite{WangRanChen2007} and \cite{Li2023}, where the restrictive condition \eqref{a1.8} is weakened to \eqref{a1.7}, and the adapted solutions and terminal conditions of BSDEs are all required to lie in some certain weighted spaces with a weighted factor $e^{\int_0^t (\mu_s(\omega)+\nu_s^2(\omega)){\rm d}s}$. Notably, it is very natural that the integrability of adapted solutions of BSDEs should depend on not only integrability of the terminal value $\xi$, but also that of the coefficients $\mu_\cdot$ and $\nu_\cdot$.

Furthermore, under the assumptions that the generator $g$ satisfies \eqref{a1.3}, \eqref{a1.6} and a weaker monotonicity condition than \eqref{a1.333}, by using the convolution approaching idea, the a priori estimate technique and the truncation argument used in \cite{Pardoux1999} and \cite{Briand2003}, and the uniform continuity of a continuous function on a compact set instead of the weak convergence method, \cite{FanJiang2013} proved an existence and uniqueness result for the adapted solution of multidimensional BSDE \eqref{BSDE1.1} with constant terminal time. This result was further extended in \cite{Xiao2015} and \cite{XiaoandFan2017} to the case of infinite time horizon BSDEs, where the constants $\mu$ and $\nu$ in \eqref{a1.333} and \eqref{a1.3} need to be replaced with two deterministic functions $\mu_t$ and $\nu_t$ such that
\begin{align}\label{a1.9}
\int_0^{+\infty} (\mu_t+\nu_t^2){\rm d}t<+\infty.
\end{align}
We mention that \cite{XiaoandFan2017} developed a better truncation argument to dominate the growth of the generator $g$ and its gradient such that the proof procedure was greatly simplified. Readers are also referred to \cite{LiXuFan2021PUQR} and \cite{LiFan2023CSTM} for infinite time horizon BSDEs with the general growth condition  \eqref{a1.6} and stochastic coefficients $\mu_\cdot$ and $\nu_\cdot$ satisfying \eqref{a1.8}. Additionally, under the assumptions that the generator $g$ has a sub-quadratic growth in $y$ and a super-linear growth in $z$, \cite{Bahlali2015} dealt with the solvability of multidimensional BSDEs with constant terminal time $T$ and stochastic coefficients $\mu_\cdot$ and $\nu_\cdot$ under some local conditions, where the $\int_0^{T}(\mu_t(\omega)+\nu_t^2(\omega)){\rm d}t$ is forced to have a certain exponential moment, see Example 3 in \cite{Bahlali2015} for more details.

Inspired by the above works, in this study we establish a general existence and uniqueness result for the adapted solution of multidimensional BSDE \eqref{BSDE1.1} with a general random terminal time, see \cref{thm:3.1} in Section 3 for details. The generator $g$ of the BSDE is required to satisfy the stochastic monotonicity condition in $y$ and the stochastic Lipschitz continuity condition in $z$ with stochastic coefficients $\mu_\cdot$ and $\nu_\cdot$ only satisfying \eqref{a1.7} and without any other restrictions, and it can have a more general growth in the state variable $y$ than that stated in \eqref{a1.6}. The desired adapted solution is required to lie in a weighted $L^2$-space with a weighted factor $e^{\int_0^t (\beta \mu_s(\omega)+\frac{\rho}{2}\nu_s^2(\omega)){\rm d}s}$ for any given $\beta\geq 1$ and $\rho>1$. This weighted space is different from those used in \cite{KarouiHuang1997}, \cite{BenderKohlmann2000} and \cite{Li2023}. The ranges of $\beta$ and $\rho$ is the key point. A motivational example is provided in Section 2 to explain why we choose such weighted factor. We emphasize that when \eqref{a1.8} is satisfied, the weighted $L^2$-space is just the usual $L^2$-space. Our result unifies and significantly strengthens some existing ones, see corollaries together with remarks and examples in Section 3 for more details. In \cref{pro:1.1} of Section 2, with the help of the weighted factor, It\^{o}'s formula and the Burkholder-Davis-Gundy inequality we establish an a priori estimate on the weighted solution of a random time horizon multidimensional BSDE in the weighted $L^2$-space. Based on it, the uniqueness part of \cref{thm:3.1}, a continuous dependence property and a stability theorem for the weighted solutions of BSDEs (see \cref{thm:3.2,thm:3.3} in Section 3) are naturally verified. As for the proof of the existence part of \cref{thm:3.1}, the approaching strategy applied to study BSDE \eqref{BSDE1.1*} is not available any longer, and the additional integrability condition on $g(t,\xi_t,\eta_t)$ is not required. Instead, we apply systematically the techniques used in \cite{Pardoux1999}, \cite{Briand2003}, \cite{FanJiang2013} and \cite{XiaoandFan2017}, and develop some innovative ideas tackling new difficulties. These difficulties arise naturally due to the combination of the weaker growth of the generator $g$ in $y$, the more general integrability of stochastic coefficients $\mu_\cdot$ and $\nu_\cdot$, and the more general weighted $L^2$-space, see \cref{pro:2.1} and \cref{rmk:4.2,rmk:4.3,rmk:4.4} in Section 4 for more details. It finally turns out that if a proper weighted factor is taken into consideration, then the existence and uniqueness result on solutions of BSDEs over finite time intervals presented for example in \cite{Briand2003} and \cite{Pardoux1999} holds still for random time horizon BSDEs with stochastic coefficients.

The rest of this study is organized as follows. In Section 2, a motivational example is given and the a prior estimate-\cref{pro:1.1} is established. In Section 3, we state our main result-\cref{thm:3.1} and prove its uniqueness part. Two corollaries of \cref{thm:3.1} as well as several remarks and examples are provided to illustrate the novelty of our main result, and \cref{thm:3.2,thm:3.3} are also proposed and proved in this section. As by-product, Theorems \ref{thm:3.4.1} and \ref{thm:3.4.2} present the applications for related PDEs. Finally, section 4 is devoted to the proof of the existence part of \cref{thm:3.1}.\vspace{0.2cm}

\section{Preliminaries}
\setcounter{equation}{0}

In this section, we firstly introduce some notations, and then give a motivational example about the weighted factor. Finally, with the help of the weighted factor, a general a priori estimate is established.


\subsection{Some notations and a definition}

The following notations will be used throughout this study. Denote the usual Euclidean norm by $|\cdot|$ and the usual Euclidean inner product of $x,y\in \R^k$ by $\langle x,y\rangle$. If $A$ is a matrix, we have $|A|^2=tr(AA^*)$, where $A^*$ denotes the transpose of $A$. Let $\R_+:=[0,+\infty)$ and $\hat{y}:=\frac{y}{|y|}{\bf 1}_{|y|\neq0}$ for each $y\in \R^k$, where ${\bf 1}_B$ represents the indicator of set $B$. Every equality and inequality between random elements should be understood as holding $\mathbb{P}$-almost surely.

Throughout this study, let $\beta$ and $\rho$ be two given constants satisfying $\beta\geq1$ and $\rho>1$, $\mu_\cdot$ and $\nu_\cdot$ be two given $(\F_t)$-progressively measurable nonnegative processes with $$a_\cdot:=\beta\mu_\cdot+\frac{\rho}{2} \nu_\cdot^2.$$
We also always assume that
$$\int_{0}^{\tau}{a}_t{\rm d}t<+\infty.\vspace{0.2cm}$$
Note that the ranges of $\beta$ and $\rho$ are inspired by a motivational example postponed in subsection 2.2. Let us further introduce the following weighted $L^2$-spaces with a weighted factor $e^{\int_0^t a_s{\rm d}s}$.\vspace{0.2cm}

$\bullet$  $L_\tau^2(a_\cdot;\R^k)$ denotes the set of $\F_\tau$-measurable $\R^k$-valued random vectors $\xi$ such that
$$\|\xi\|^2_{a_\cdot}:=\E\left[e^{2 \int_0^\tau a_s{\rm d}s}|\xi|^2\right]<+\infty.$$

$\bullet$  $S_\tau^2(a_\cdot;\R^k)$ denotes the set of $(\F_t)$-adapted, $\R^k$-valued and continuous processes $(Y_t)_{t\in[0,\tau]}$ such that
$$\|Y_\cdot\|^2_{a_\cdot,c}:=\E\left[\sup_{t\in[0,\tau]}\left(e^{2 \int_0^t a_r{\rm d}r}|Y_t|^2\right)\right]<+\infty.$$

$\bullet$  $M_\tau^2(a_\cdot;\R^{k\times d})$ denotes the set of $(\F_t)$-progressively measurable $\R^{k\times d}$-valued processes $(Z_t)_{t\in[0,\tau]}$ such that
$$\|Z_\cdot\|^2_{a_\cdot}:=\E\left[\int_0^\tau e^{2 \int_0^s a_r{\rm d}r}|Z_s|^2{\rm d}s\right]<+\infty.$$
When the process $a_\cdot$ is replaced with another $(\F_t)$-progressively measurable $\tilde{a}_\cdot$ with $\int_{0}^{\tau}\tilde{a}_t{\rm d}t<+\infty$, the above three spaces can be identically defined. In particular, in case of $a_\cdot\equiv 0$, or more generally, $\int_0^\tau a_t{\rm d}t\leq M$ for a constant $M>0$,
these spaces are just the usual spaces used in \cite{Briand2003}, \cite{Xiao2015} and so on. Furthermore, define
$$H_\tau^2(a_\cdot;\R^{k}\times\R^{k\times d}):=S_\tau^2(a_\cdot;\R^k)\times M_\tau^2(a_\cdot;\R^{k\times d}).$$
It is clear that $H_\tau^2(a_\cdot;\R^{k}\times\R^{k\times d})$ is a Banach space with the norm
$$\|(Y_\cdot,Z_\cdot)\|^2_{a_\cdot}:=\|Y_\cdot\|^2_{a_\cdot,c}
+\|Z_\cdot\|^2_{a_\cdot}.$$

We introduce the following definition on the weighted $L^2$-solution of BSDEs.

\begin{dfn}
Assume that $(y_t,z_t)_{t\in[0,\tau]}$ is an adapted solution to BSDE \eqref{BSDE1.1}. More specifically, $(y_t,z_t)_{t\in[0,\tau]}$ is a pair of $(\F_t)$-progressively measurable processes taking values in $\R^k\times\R^{k\times d}$ such that $\mathbb{P}-a.s.$, $y_t$ is continuous, $\int_0^\tau (|g(t,y_t,z_t)|+|z_t|^2){\rm d}t<+\infty$, and \eqref{BSDE1.1} holds. Moreover, if $(y_t,z_t)_{t\in[0,\tau]}\in H_\tau^2(a_\cdot;\R^{k}\times\R^{k\times d})$, then it will be called a weighted $L^2$-solution of BSDE \eqref{BSDE1.1} in $H_\tau^2(a_\cdot;\R^{k}\times\R^{k\times d})$.
\end{dfn}

\subsection{A motivational example}

In this section, we give a motivational example to explain why we choose the weighted factor $e^{\int_0^t (\beta\mu_s+\frac{\rho}{2}\nu_s^2){\rm d}s}$ with $\beta\geq1$ and $\rho>1$.

Assume that $k=d=1$ and $T>0$ is a finite real number. Considering the following linear BSDE:
\begin{align}\label{linearBSDE}
  y_t=\xi+\int_t^T (\mu_sy_s+\nu_sz_s){\rm d}s-\int_t^T z_s{\rm d}B_s, \ \ t\in[0,T].
\end{align}
It is well known that when $\mu_\cdot$, $\nu_\cdot$ and $\xi$ are bounded, BSDE \eqref{linearBSDE} admits a unique bounded solution
\begin{align}\label{yt}
y_t=\E\left[\xi e^{\int_t^T\mu_r{\rm d}r+\int_t^T \nu_r{\rm d}B_r-\frac{1}{2}\int_t^T\nu_r^2{\rm d}r}\bigg|\F_t\right], \ t\in[0,T].
\end{align}
In the financial market, $\mu_\cdot$ and $\nu_\cdot$ usually denote the risk-free interest rate and the risk volatility, respectively, and $y_t$ represents the price of the contingent claim $\xi$ at time $t$. Generally speaking, all of $\xi$, $\mu_\cdot$, $\nu_\cdot$ and $y_\cdot$ can be unbounded. Therefore, it is natural to study the unbounded solution of BSDE \eqref{linearBSDE} with unbounded stochastic coefficients.

In order to ensure that the process $y_\cdot$ in \eqref{yt} is well defined, it is necessary and sufficient that
$$
\E\left[\xi e^{\int_0^T\mu_r{\rm d}r+\int_0^T \nu_r{\rm d}B_r-\frac{1}{2}\int_0^T\nu_r^2{\rm d}r}\right]<+\infty.
$$
We do not hope this condition to be related to $\int_0^T \nu_r{\rm d}B_r$. Note by H\"{o}lder's inequality that
\begin{align*}
\begin{split}
&\E\left[\xi e^{\int_0^T\mu_r{\rm d}r+\frac{1}{2}\int_0^T\nu_r^2{\rm d}r+\int_0^T \nu_r{\rm d}B_r-\int_0^T\nu_r^2{\rm d}r}\right]\\
&\ \ \leq\left(\E\left[|\xi|^2e^{2\int_0^T\mu_r{\rm d}r+\int_0^T\nu_r^2{\rm d}r}\right]\right)^{\frac{1}{2}}
\left(\E\left[e^{2\int_0^T \nu_r{\rm d}B_r-2\int_0^T\nu_r^2{\rm d}r}\right]\right)^{\frac{1}{2}}\\
&\ \ \leq\left(\E\left[|\xi|^2e^{2\int_0^T\mu_r{\rm d}r+\int_0^T\nu_r^2{\rm d}r}\right]\right)^{\frac{1}{2}}.
\end{split}
\end{align*}
The process $y_\cdot$ in \eqref{yt} can be well defined provided that for some constants $\tilde{\beta}\geq 1$ and $\tilde{\rho}\geq 1$,
\begin{align}\label{betarou}
\begin{split}
\E\left[|\xi|^2 e^{2\tilde{\beta}\int_0^T\mu_r{\rm d}r+\tilde{\rho}\int_0^T\nu_r^2{\rm d}r}\right]<+\infty.
\end{split}
\end{align}
Furthermore, we can prove that if \eqref{betarou} holds, then the process $y_\cdot$ in \eqref{yt} satisfies that for each $t\in \T$,
\begin{align}\label{yt222}
\begin{split}
\E\left[|y_t|^2e^{2\tilde{\beta}\int_0^t\mu_r{\rm d}r+\tilde{\rho}\int_0^t\nu_r^2{\rm d}r}\right]<+\infty.
\end{split}
\end{align}
Indeed, considering $\tilde{\beta}\geq1$, $\tilde{\rho}\geq1$ and H\"{o}lder's inequality we deduce that for each $t\in \T$,
\begin{align}\label{yt*}
&y_te^{\tilde{\beta}\int_0^t\mu_r{\rm d}r+\frac{\tilde{\rho}}{2}\int_0^t\nu_r^2{\rm d}r} =\E\left[\xi e^{\int_t^T\mu_r{\rm d}r+\int_t^T\nu_r{\rm d}B_r-\frac{1}{2}\int_t^T\nu_r^2{\rm d}r+\tilde{\beta}\int_0^t\mu_r{\rm d}r+\frac{\tilde{\rho}}{2}\int_0^t\nu_r^2{\rm d}r}\bigg|\F_t\right]\nonumber\\
&\ \ =\E\left[\xi e^{\tilde{\beta}\int_0^T\mu_r{\rm d}r+\frac{\tilde{\rho}}{2}\int_0^T\nu_r^2{\rm d}r}e^{(1-\tilde{\beta})\int_t^T\mu_r{\rm d}r+\int_t^T\nu_r{\rm d}B_r-\frac{\tilde{\rho}+1}{2}\int_t^T\nu_r^2{\rm d}r}\bigg|\F_t\right]\\
&\ \ \leq \left(\E\left[|\xi|^2 e^{2\tilde{\beta}\int_0^T\mu_r{\rm d}r+\tilde{\rho}\int_0^T\nu_r^2{\rm d}r}\bigg|\F_t\right]\right)^{\frac{1}{2}} \left(\E\left[e^{2(1-\tilde{\beta})\int_t^T\mu_r{\rm d}r+2\int_t^T\nu_r{\rm d}B_r-(\tilde{\rho}+1)\int_t^T\nu_r^2{\rm d}r}\bigg|\F_t\right]\right)^{\frac{1}{2}}\nonumber\\
&\ \ \leq \left(\E\left[|\xi|^2 e^{2\tilde{\beta}\int_0^T\mu_r{\rm d}r+\tilde{\rho}\int_0^T\nu_r^2{\rm d}r}\bigg|\F_t\right]\right)^{\frac{1}{2}}
\left(\E\left[e^{2\int_t^T\nu_r{\rm d}B_r-2\int_t^T\nu_r^2{\rm d}r}\bigg|\F_t\right]\right)^{\frac{1}{2}}.\nonumber
\end{align}
Since a non-negative local martingale is a supermartingale, we know that for each $c\in\R_+$ and $t\in[0,T]$,
\begin{align}\label{23}
\begin{split}
\E\left[e^{c\int_t^T\nu_r{\rm d}B_r-\frac{c^2}{2}\int_t^T\nu_r^2{\rm d}r}\bigg|\F_t\right] =\E\left[e^{c\int_0^T\tilde{\nu}_r{\rm d}B_r-\frac{c^2}{2}\int_0^T\tilde{\nu}_r^2{\rm d}r}\bigg|\F_t\right]\leq e^{c\int_0^t\tilde{\nu}_r{\rm d}B_r-\frac{c^2}{2}\int_0^t\tilde{\nu}_r^2{\rm d}r}=1,
\end{split}
\end{align}
where
\begin{align*}
\tilde{\nu}_r:=
\left\{\begin{aligned}
&\nu_r, \quad t\leq r\leq T;\\
&0, \quad \quad 0\leq r< t.
\end{aligned}\right.
\end{align*}
Thus, the desired assertion \eqref{yt222} follows immediately by combining \eqref{yt*}, \eqref{23} and \eqref{betarou}.

It should be especially mentioned that for the case of $\tilde{\rho}=1$, when \eqref{betarou} is satisfied, although
\eqref{yt222} holds for each $t\in\T$, it is uncertain that the process $y_\cdot$ in \eqref{yt} satisfies
\begin{align}\label{29sup}
\begin{split}
\E\left[\sup\limits_{t\in[0,T]}\left(|y_t|^2e^{2\tilde{\beta}\int_0^t\mu_r{\rm d}r+\tilde{\rho}\int_0^t\nu_r^2{\rm d}r}\right)\right]<+\infty.
\end{split}
\end{align}
For example, let $\mu_\cdot=0$, $\nu_\cdot=b_\cdot$ and $\xi:=e^{\int_0^Tb_r{\rm d}B_r-\frac{3}{2}\int_0^Tb_r^2{\rm d}r}$, where $b_\cdot$ is a given $(\F_t)$-progressively measurable nonnegative processes such that $\left\{e^{2\int_0^tb_r{\rm d}B_r-2\int_0^tb_r^2{\rm d}r}\right\}_{t\in[0,T]}$ is uniformly integrable, but
\begin{align*}
\E\left[\sup\limits_{t\in[0,T]}e^{2\int_0^tb_r{\rm d}B_r-2\int_0^tb_r^2{\rm d}r}\right]=+\infty.
\end{align*}
Then, \eqref{betarou} with $\tilde\rho=1$ holds since
\begin{align*}
\E\left[|\xi|^2 e^{\int_0^T b_r^2{\rm d}r}\right]=\E\left[e^{2\int_0^Tb_r{\rm d}B_r-2\int_0^Tb_r^2{\rm d}r}\right]<+\infty,
\end{align*}
but the process $y_\cdot$ in \eqref{yt} does not satisfies \eqref{29sup} with $\tilde\rho=1$  since
$$
y_t=\E\left[\left.e^{2\int_0^Tb_r{\rm d}B_r-2\int_0^Tb_r^2{\rm d}r}
e^{-\int_0^t b_r{\rm d}B_r+\frac{1}{2}\int_0^t b_r^2{\rm d}r}\right|\F_t \right]=e^{\int_0^t b_r{\rm d}B_r-\frac{3}{2}\int_0^t b_r^2{\rm d}r},\ \ t\in \T
$$
and then
$$
\E\left[\sup\limits_{t\in[0,T]}\left(|y_t|^2 e^{\int_0^tb_r^2{\rm d}r}\right)\right]=\E\left[\sup\limits_{t\in[0,T]}e^{2\int_0^tb_r{\rm d}B_r-2\int_0^tb_r^2{\rm d}r}\right]=+\infty.\vspace{0.1cm}
$$

However, we can prove that for the case of $\tilde{\rho}>1$, when \eqref{betarou} holds, the process $y_\cdot$ in \eqref{yt} must satisfy \eqref{29sup}. Indeed, set $p:=\frac{\tilde{\rho}+1}{\tilde{\rho}}\in (1,2)$ and $q:=\tilde{\rho}+1>2$ satisfying $\frac{1}{p}+\frac{1}{q}=1$. By \eqref{yt*}, H\"{o}lder's inequality and \eqref{23}, we deduce that for each $t\in\T$,
\begin{align*}
&y_te^{\tilde{\beta}\int_0^t\mu_r{\rm d}r+\frac{\tilde{\rho}}{2}\int_0^t\nu_r^2{\rm d}r}=\E\left[\xi e^{\tilde{\beta}\int_0^T\mu_r{\rm d}r+\frac{\tilde{\rho}}{2}\int_0^T\nu_r^2{\rm d}r}e^{(1-\tilde{\beta})\int_t^T\mu_r{\rm d}r+\int_t^T\nu_r{\rm d}B_r-\frac{\tilde{\rho}+1}{2}\int_t^T\nu_r^2{\rm d}r}\bigg|\F_t\right]\\
&\ \ \leq \left(\E\left[|\xi|^p e^{p\tilde{\beta}\int_0^T\mu_r{\rm d}r+\frac{p\tilde{\rho}}{2}\int_0^T\nu_r^2{\rm d}r}\bigg|\F_t\right]\right)^{\frac{1}{p}} \left(\E\left[e^{q(1-\tilde{\beta})\int_t^T\mu_r{\rm d}r+q\int_t^T\nu_r{\rm d}B_r-\frac{q(\tilde{\rho}+1)}{2}\int_t^T\nu_r^2{\rm d}r}\bigg|\F_t\right]\right)^{\frac{1}{q}}\\
&\ \ \leq \left(\E\left[|\xi|^p e^{p\tilde{\beta}\int_0^T\mu_r{\rm d}r+\frac{p\tilde{\rho}}{2}\int_0^T\nu_r^2{\rm d}r}\bigg|\F_t\right]\right)^{\frac{1}{p}}
\left(\E\left[e^{q\int_t^T\nu_r{\rm d}B_r-\frac{q^2}{2}\int_t^T\nu_r^2{\rm d}r}\bigg|\F_t\right]\right)^{\frac{1}{q}}\\
&\ \ \leq \left(\E\left[|\xi|^p e^{p\tilde{\beta}\int_0^T\mu_r{\rm d}r+\frac{p\tilde{\rho}}{2}\int_0^T\nu_r^2{\rm d}r}\bigg|\F_t\right]\right)^{\frac{1}{p}}.
\end{align*}
Then, in view of $\frac{2}{p}>1$, combining the last inequality and Doob's inequality, we obtain
\begin{align*}
\begin{split}
\E\left[\sup\limits_{t\in[0,T]}\left(|y_t|^2e^{2\tilde{\beta}\int_0^t\mu_r{\rm d}r+\tilde{\rho}\int_0^t\nu_r^2{\rm d}r}\right)\right] &\leq \E\left[\sup\limits_{t\in[0,T]}\left(\E\left[|\xi|^p e^{p\tilde{\beta}\int_0^T\mu_r{\rm d}r+\frac{p\tilde{\rho}}{2}\int_0^T\nu_r^2{\rm d}r}\bigg|\F_t\right]\right)^\frac{2}{p}\right]\\
&\leq \E\left[|\xi|^2 e^{2\tilde{\beta}\int_0^T\mu_r{\rm d}r+\tilde{\rho}\int_0^T\nu_r^2{\rm d}r}\right].
\end{split}
\end{align*}
Consequently, for the case of $\tilde\beta\geq 1$ and $\tilde\rho>1$, \eqref{29sup} holds when \eqref{betarou} is satisfied. More specifically, we have verified that if $\xi\in L_T^2(\tilde{\beta}\mu_\cdot+\frac{\tilde{\rho}}{2}\nu_\cdot^2;\R)$ for some $\tilde{\beta}\geq 1$ and $\tilde{\rho}>1$, then the linear BSDE \eqref{linearBSDE} admits an adapted solution $(y_\cdot,z_\cdot)$ such that the process $y_\cdot$ belongs to the weighted space of $ S_T^2(\tilde{\beta}\mu_\cdot+\frac{\tilde{\rho}}{2}\nu_\cdot^2;\R)$. This inspires the research of this paper on the unbounded solution of a general nonlinear BSDE \eqref{BSDE1.1} with unbounded stochastic coefficients $\mu_\cdot$ and $\nu_\cdot$ in the weighted space with a weighted factor $e^{{\beta}\int_0^t\mu_r{\rm d}r+\frac{\rho}{2}\int_0^t\nu_r^2{\rm d}r}$ for $\beta\geq1$ and $\rho>1$.

\subsection{An a priori estimate}

In this subsection, we establish the following a priori estimate for weighted $L^2$-solutions of BSDEs with random terminal time. This result improves Proposition 3.2 in \cite{Briand2003} and Proposition 2.1 in \cite{Xiao2015}, and will play a vital role in the proof of our main result in this study.

\begin{pro}\label{pro:1.1}
Let $\bar\mu_\cdot$ and $\bar\nu_\cdot$ be two $(\F_t)$-progressively measurable nonnegative processes such that
$$\int_{0}^{\tau}\bar{a}_t{\rm d}t<+\infty$$
with $\overline{a}_t:=\beta\bar\mu_t+\frac{\rho}{2} \bar\nu_t^2$, and $f_\cdot$ be an $(\F_t)$-progressively measurable nonnegative process such that
$$
\E\left[\left(\int_{0}^{\tau}e^{ \int_{0}^{t}\overline{a}_r{\rm d}r}f_t{\rm d}t\right)^2\right]<+\infty.
$$
Assume that $\xi\in L_\tau^2(\overline{a}_\cdot;\R^k)$, the generator $g$ satisfies the following assumption
\begin{enumerate}
\renewcommand{\theenumi}{(A)}
\renewcommand{\labelenumi}{\theenumi}
\item\label{A:A} $\forall(y,z)\in\R^k\times\R^{k\times{d}}, \ \left<\hat{y},g(\omega,t,y,z)\right>\leq f_{t}(\omega)+\bar\mu_t(\omega)|y|+\bar\nu_t(\omega)|z|, \ t\in[0,\tau],$
\end{enumerate}
and $(Y_t,Z_t)_{t\in[0,\tau]}$ is a solution of BSDE \eqref{BSDE1.1}. If $Y_\cdot\in S_\tau^{2}(\overline{a}_\cdot;\R^k)$, then $Z_\cdot\in M_\tau^2(\overline{a}_\cdot;\R^{k\times d})$ and for each $1<\overline{\rho}\leq\rho$ and $0\leq r\leq t<+\infty$, we have
\begin{align}\label{2.02}
\begin{split}
&\E\left[\int_{t\wedge\tau}^{\tau}e^{2\int_{0}^{s}\overline{a}_{r}{\rm d}r}|Z_s|^2{\rm d}s\bigg|\F_{r\wedge\tau}\right]\\
&\ \ \leq
\frac{2\overline{\rho}}{\overline{\rho}-1}\left(\E\left[\sup_{s\in[t\wedge\tau,\tau]}\left(e^{2 \int_{0}^{s}\overline{a}_r{\rm d}r}|Y_s|^2\right)\bigg|\F_{r\wedge\tau}\right] +\E\left[\left(\int_{t\wedge\tau}^{\tau}e^{ \int_{0}^{s}\overline{a}_r{\rm d}r}f_s{\rm d}s\right)^2\bigg|\F_{r\wedge\tau}\right]\right)
\end{split}
\end{align}
and
\begin{align}\label{2.03*}
\begin{split}
&\E\left[\sup_{s\in[t\wedge\tau,\tau]}\left(e^{2\int_{0}^{s}\overline{a}_{r}{\rm d}r}|Y_s|^2\right)\bigg|\F_{r\wedge\tau}\right]+ \E\left[\int_{t\wedge\tau}^{\tau}e^{2 \int_{0}^{s}\overline{a}_r{\rm d}r}|Z_s|^2{\rm d}s\bigg|\F_{r\wedge\tau}\right]\\
&\hspace*{0.6cm}+\E\left[\int_{t\wedge\tau}^{\tau}e^{2 \int_{0}^{s}\overline{a}_r{\rm d}r}\left((2\beta-2)\bar\mu_s+(\rho-\overline{\rho})\bar\nu_s^2\right)|Y_s|^2{\rm d}s\bigg|\F_{r\wedge\tau}\right]\\
&\ \  \leq 4\left(2+\frac{33\overline{\rho}}{\overline{\rho}-1}\right)^2\left(\E\left[e^{2\int_{0}^{\tau}\overline{a}_{r}{\rm d}r}|\xi|^2\bigg|\F_{r\wedge\tau}\right]+\E\left[\left(
\int_{t\wedge\tau}^{\tau}e^{ \int_{0}^{s}\overline{a}_r{\rm d}r}f_s{\rm d}s\right)^2\bigg|\F_{r\wedge\tau}\right]\right).
\end{split}
\end{align}
In particular, there exists a uniform constant $C>0$ such that
\begin{align}\label{2.03}
\begin{split}
&\E\left[\sup_{s\in[t\wedge\tau,\tau]}\left(e^{2\int_{0}^{s}\overline{a}_{r}{\rm d}r}|Y_s|^2\right)\bigg|\F_{r\wedge\tau}\right]+ \E\left[\int_{t\wedge\tau}^{\tau}e^{2 \int_{0}^{s}\overline{a}_r{\rm d}r}|Z_s|^2{\rm d}s\bigg|\F_{r\wedge\tau}\right]\\
&\ \ \leq C\left(\E\left[e^{2\int_{0}^{\tau}\overline{a}_{r}{\rm d}r}|\xi|^2\bigg|\F_{r\wedge\tau}\right]+\E\left[\left(
\int_{t\wedge\tau}^{\tau}e^{ \int_{0}^{s}\overline{a}_r{\rm d}r}f_s{\rm d}s\right)^2\bigg|\F_{r\wedge\tau}\right]\right)
\end{split}
\end{align}
and
\begin{align}\label{2.04}
\begin{split}
&\E\left[\sup_{s\in[t\wedge\tau,\tau]}\left(e^{2 \int_{0}^{s}\overline{a}_r{\rm d}r}|Y_s|^2\right)\bigg|\F_{r\wedge\tau}\right]+
\E\left[\int_{t\wedge\tau}^{\tau}e^{2 \int_{0}^{s}\overline{a}_r{\rm d}r}|Z_s|^2{\rm d}s\bigg|\F_{r\wedge\tau}\right]\\
&\ \ \leq C\left(\E\left[e^{2 \int_{0}^{\tau}\overline{a}_r{\rm d}r}|\xi|^2\bigg|\F_{r\wedge\tau}\right]+\E\left[
\int_{t\wedge\tau}^{\tau}e^{2 \int_{0}^{s}\overline{a}_r{\rm d}r}|Y_s|f_s{\rm d}s\bigg|\F_{r\wedge\tau}\right]\right).
\end{split}
\end{align}
\end{pro}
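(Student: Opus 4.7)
The plan is a weighted It\^{o}-energy argument. I apply It\^{o}'s formula to $e^{2\int_0^s \overline{a}_r{\rm d}r}|Y_s|^2$ on intervals $[u,\tau_n]$ with $\tau_n:=\inf\{t\geq 0:\int_0^t e^{2\int_0^s\overline{a}_r{\rm d}r}|Z_s|^2{\rm d}s\geq n\}\wedge\tau$ localizing the stochastic integral (since $Z\in M_\tau^2(\overline{a}_\cdot)$ is not yet known). Combining the identity with assumption (A) through $\langle Y_s,g(s,Y_s,Z_s)\rangle\leq |Y_s|f_s+\bar\mu_s|Y_s|^2+\bar\nu_s|Y_s||Z_s|$ and Young's inequality $2\bar\nu_s|Y_s||Z_s|\leq \overline{\rho}\,\bar\nu_s^2|Y_s|^2+|Z_s|^2/\overline{\rho}$, and cancelling against $\overline{a}_s=\beta\bar\mu_s+\tfrac{\rho}{2}\bar\nu_s^2$, one obtains the fundamental inequality: for every $u\in[t\wedge\tau_n,\tau_n]$,
\begin{align*}
&e^{2\int_0^u \overline{a}_r {\rm d}r}|Y_u|^2 + \tfrac{\overline{\rho}-1}{\overline{\rho}}\!\int_u^{\tau_n}\! e^{2\int_0^s \overline{a}_r {\rm d}r}|Z_s|^2{\rm d}s + \!\int_u^{\tau_n}\! e^{2\int_0^s \overline{a}_r {\rm d}r}\bigl[(2\beta-2)\bar\mu_s+(\rho-\overline{\rho})\bar\nu_s^2\bigr]|Y_s|^2{\rm d}s \\
&\qquad \leq e^{2\int_0^{\tau_n}\overline{a}_r {\rm d}r}|Y_{\tau_n}|^2 + 2\!\int_u^{\tau_n}\! e^{2\int_0^s \overline{a}_r {\rm d}r}|Y_s|f_s{\rm d}s - 2\!\int_u^{\tau_n}\! e^{2\int_0^s \overline{a}_r {\rm d}r}\langle Y_s,Z_s{\rm d}B_s\rangle.
\end{align*}
Positivity of the $|Z|^2$-coefficient is precisely where the hypothesis $\overline{\rho}>1$ enters.

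Setting $u=t\wedge\tau_n$ and taking $\E[\cdot|\F_{r\wedge\tau}]$ (which kills the stochastic integral by the choice of $\tau_n$), discarding the nonnegative left-hand terms other than the $|Z|^2$-integral, and sending $n\to\infty$ via monotone convergence on the left and dominated convergence on the right (using $Y\in S_\tau^2(\overline{a}_\cdot)$, continuity of $Y$ to obtain $Y_{\tau_n}\to\xi$ a.s., $\xi\in L_\tau^2(\overline{a}_\cdot)$, and the $L^2$-hypothesis on $\int_0^\tau e^{\int\overline{a}}f\,{\rm d}s$), I get $Z\in M_\tau^2(\overline{a}_\cdot;\R^{k\times d})$. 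Bounding the right-hand side with $e^{2\int_0^\tau\overline{a}}|\xi|^2\leq\sup_s e^{2\int_0^s\overline{a}}|Y_s|^2$ and $2\int e^{2\int\overline{a}}|Y|f\,{\rm d}s\leq\sup_s e^{2\int_0^s\overline{a}}|Y_s|^2+(\int e^{\int\overline{a}}f\,{\rm d}s)^2$ then yields \eqref{2.02} with constant $\tfrac{2\overline{\rho}}{\overline{\rho}-1}$.

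For \eqref{2.03*} I use two versions of the fundamental inequality (the localization can now be dropped since $Z\in M_\tau^2(\overline{a}_\cdot)$). The $u=t\wedge\tau$ version, after $\E[\cdot|\F_{r\wedge\tau}]$, controls $\E[\int|Z|^2|\F_{r\wedge\tau}]$ and $\E[\int[(2\beta-2)\bar\mu+(\rho-\overline{\rho})\bar\nu^2]|Y|^2|\F_{r\wedge\tau}]$; the general-$u$ version, after taking $\sup_{u\in[t\wedge\tau,\tau]}$ and then $\E[\cdot|\F_{r\wedge\tau}]$, controls $\E[\sup|Y|^2|\F_{r\wedge\tau}]$. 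Writing $N_u:=\int_{t\wedge\tau}^u e^{2\int_0^s\overline{a}}\langle Y_s,Z_s{\rm d}B_s\rangle$, one has $\sup_u|N_\tau-N_u|\leq 2\sup_u|N_u|$ and $\langle N\rangle_\tau\leq\sup_s(e^{2\int_0^s\overline{a}}|Y_s|^2)\int_{t\wedge\tau}^\tau e^{2\int_0^s\overline{a}}|Z_s|^2{\rm d}s$, so Burkholder--Davis--Gundy plus Young with a tunable $\lambda>0$ give
\begin{equation*}
2\E\!\left[\sup_u|N_\tau-N_u|\bigm|\F_{r\wedge\tau}\right]\leq \tfrac{2c_1}{\lambda}\E\!\left[\sup_u e^{2\int_0^u\overline{a}}|Y_u|^2\bigm|\F_{r\wedge\tau}\right]+2c_1\lambda\,\E\!\left[\int e^{2\int_0^s\overline{a}}|Z_s|^2{\rm d}s\bigm|\F_{r\wedge\tau}\right],
\end{equation*}
with $c_1$ a universal BDG constant. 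Choosing $\lambda$ large enough to absorb a fraction of $\E[\sup|Y|^2|\F_{r\wedge\tau}]$ into the left-hand side, then substituting \eqref{2.02} to eliminate $\E[\int|Z|^2|\F_{r\wedge\tau}]$, and finally recombining with the $\int|Z|^2$ and $\int[\ldots]|Y|^2$ bounds from the $u=t\wedge\tau$ version, yields \eqref{2.03*}; the exact constant $4(2+33\overline{\rho}/(\overline{\rho}-1))^2$ is recovered by tracking $c_1$ carefully through this bookkeeping. Inequality \eqref{2.03} follows from \eqref{2.03*} by discarding the nonnegative $\int[(2\beta-2)\bar\mu+(\rho-\overline{\rho})\bar\nu^2]|Y|^2$-term, while \eqref{2.04} follows by running the same absorption procedure but keeping $\E[\int e^{2\int\overline{a}}|Y|f{\rm d}s|\F_{r\wedge\tau}]$ on the right rather than majorizing it by $\sup|Y|^2$ and $(\int e^{\int\overline{a}}f{\rm d}s)^2$ via AM--GM. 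The chief obstacle is exactly this self-referential absorption --- feeding \eqref{2.02} back into the BDG bound for the sup-estimate to close the loop --- which works only because $\overline{\rho}>1$; a secondary technicality is the passage $n\to\infty$ in the localization while $Z\in M_\tau^2(\overline{a}_\cdot)$ is not yet in hand.
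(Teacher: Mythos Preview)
Your proposal is correct and follows essentially the same approach as the paper: weighted It\^{o} on $e^{2\int_0^\cdot\overline{a}}|Y|^2$, assumption (A) combined with Young's inequality $2\bar\nu|Y||Z|\leq\overline{\rho}\bar\nu^2|Y|^2+|Z|^2/\overline{\rho}$ to isolate $(1-1/\overline{\rho})\int e^{2\int\overline{a}}|Z|^2$, localization to deduce $Z\in M_\tau^2(\overline{a}_\cdot)$, then BDG plus self-absorption to close the sup-estimate. The only procedural difference is that the paper localizes by $\tau_n:=\inf\{t:\int_0^t e^{2\int\overline{a}}(\overline{a}_s|Y_s|^2+|Z_s|^2)\,{\rm d}s\geq n\}\wedge\tau$ and conditions on $\F_{r\wedge\tau_m}$ with $m\leq n$, passing to the limit in two stages ($n\to\infty$ by Fatou, then $m\to\infty$ by the martingale convergence theorem), whereas you condition directly on $\F_{r\wedge\tau}$ and take a single limit; both routes are valid.
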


\begin{proof}
For each integer $n\geq1$, define the following $(\F_t)$-stopping time
$$\tau_{n}:=\inf \left\{t\geq0: \int_{0}^{t} e^{2 \int_{0}^{s}\overline{a}_r{\rm d}r}\overline{a}_s|Y_s|^{2}{\rm d}s+\int_{0}^{t}e^{2 \int_{0}^{s}\overline{a}_r{\rm d}r}|Z_{s}|^{2} {\rm d}s \geq n\right\} \wedge \tau,$$
with convention that $\inf \emptyset=+\infty$.

Applying It\^{o}'s formula to $|Y_t|^2e^{2\int_{0}^{t}\overline{a}_r{\rm d}r}$ yields that for each $t\geq0$ and $n\geq1$,
\begin{align}\label{22.4}
\begin{split}
&|Y_{t\wedge\tau_n}|^2e^{2 \int_{0}^{t\wedge\tau_n}\overline{a}_r{\rm d}r}+\int_{t\wedge\tau_n}^{\tau_n} e^{2 \int_{0}^{s}\overline{a}_r{\rm d}r}|Z_s|^2{\rm d}s+2 \int_{t\wedge\tau_n}^{\tau_n} e^{2 \int_{0}^{s}\overline{a}_r{\rm d}r}\overline{a}_s|Y_s|^2{\rm d}s\\
&\ \ =|Y_{\tau_n}|^2e^{2 \int_{0}^{\tau_n}\overline{a}_r{\rm d}r}+2\int_{t\wedge\tau_n}^{\tau_n} e^{2 \int_{0}^{s}\overline{a}_r{\rm d}r}\langle Y_s,g(s,Y_s,Z_s)\rangle{\rm d}s-2\int_{t\wedge\tau_n}^{\tau_n} e^{2 \int_{0}^{s}\overline{a}_r{\rm d}r}\langle Y_s,Z_s{\rm d}B_s\rangle.
\end{split}
\end{align}
In light of assumption \ref{A:A} and inequality $2ab\leq \overline{\rho} a^2+\frac{1}{\overline{\rho}}b^2$, we have
\begin{align}\label{2.2}
\begin{split}
2\left<Y_t,g(t,Y_t,Z_t)\right>
\leq&\  2\bar\mu_{t}|Y_t|^2+2\bar\nu_t|Y_t||Z_t|+2|Y_t|f_{t}\\
\leq&\  2\bar\mu_{t}|Y_t|^2+\overline{\rho} \bar\nu_{t}^2|Y_t|^2+\frac{1}{\overline{\rho}}|Z_t|^2+2|Y_t|f_{t},  \ t\in[0,\tau_{n}].
\end{split}
\end{align}
It follows from the Burkholder-Davis-Gundy (BDG in short) inequality that for each $n\geq1$,
$$\left(\int_{0}^{t\wedge\tau_n} e^{2 \int_{0}^{s}\overline{a}_r{\rm d}r}\langle Y_s,Z_s{\rm d}B_s\rangle\right)_{t\geq0}$$
is a uniformly integrable martingale. Indeed, by Theorem 1 in \cite{Ren2008BDG} we have that for each $n\geq1$,
\begin{align}\label{2.66}
\begin{split}
&2\E\left[\sup_{t\geq0}\left|\int_0^{t\wedge\tau_n} e^{2 \int_{0}^{s}\overline{a}_r{\rm d}r}\langle Y_s,Z_s{\rm d}B_s\rangle\right|\right]
\leq 4\sqrt{2}\E\left[\left(\int_0^{\tau_n} e^{4 \int_{0}^{s}\overline{a}_r{\rm d}r}|Y_s|^2|Z_s|^2{\rm d}s\right)^{\frac{1}{2}}\right]\\
&\ \ \leq  \frac{1}{2}\E\left[\sup_{s\in[0,\tau_n]}\left(e^{2 \int_{0}^{s}\overline{a}_r{\rm d}r}|Y_s|^2\right)\right]+16\E\left[\int_0^{\tau_n} e^{2 \int_{0}^{s}\overline{a}_r{\rm d}r}|Z_s|^2{\rm d}s\right]<+\infty.
\end{split}
\end{align}
Then, in view of \eqref{2.2} and the fact of $1<\overline{\rho}\leq\rho$, by taking the  conditional mathematical expectation with respect to $\F_{r\wedge\tau_m}$ in both sides of \eqref{22.4} and using the inequality $2ab\leq a^2+b^2$ we deduce that for each $0\leq r\leq t<+\infty$
and $n\geq m\geq1$,
\begin{align}\label{0032.2}
\begin{split}
&\left(1-\frac{1}{\overline{\rho}}\right)\E\left[\int_{t\wedge\tau_n}^{\tau_n}e^{2 \int_{0}^{s}\overline{a}_r{\rm d}r}|Z_s|^2{\rm d}s\bigg|\F_{r\wedge\tau_m}\right]\\
&\hspace*{0.6cm}+\E\left[\int_{t\wedge\tau_n}^{\tau_n} e^{2 \int_{0}^{s}\overline{a}_r{\rm d}r}\left((2\beta-2)\bar\mu_s+(\rho-\overline{\rho})\bar\nu_s^2\right)|Y_s|^2{\rm d}s\bigg|\F_{r\wedge\tau_m}\right]\\
&\ \ \leq
\E\left[\sup_{s\in[t\wedge\tau_n,\tau_n]}\left(e^{2 \int_{0}^{s}\overline{a}_r{\rm d}r}|Y_s|^2\right)\bigg|\F_{r\wedge\tau_m}\right]+2\E\left[\int_{t\wedge\tau_n}^{\tau_n}e^{2 \int_{0}^{s}\overline{a}_r{\rm d}r}|Y_s|f_s{\rm d}s\bigg|\F_{r\wedge\tau_m}\right]\\
&\ \ \leq
2\E\left[\sup_{s\in[t\wedge\tau_n,\tau_n]}\left(e^{2 \int_{0}^{s}\overline{a}_r{\rm d}r}|Y_s|^2\right)\bigg|\F_{r\wedge\tau_m}\right]+\E\left[\left(\int_{t\wedge\tau_n}^{\tau_n}e^{ \int_{0}^{s}\overline{a}_r{\rm d}r}f_s{\rm d}s\right)^2\bigg|\F_{r\wedge\tau_m}\right].
\end{split}
\end{align}
Letting $n\rightarrow \infty$ and using Fatou's lemma in both sides of the last inequality yields that for each $0\leq r\leq t<+\infty$ and $m\geq1$,
\begin{align}\label{0032.2*}
\begin{split}
&\left(1-\frac{1}{\overline{\rho}}\right)\E\left[\int_{t\wedge\tau}^{\tau}e^{2 \int_{0}^{s}\overline{a}_r{\rm d}r}|Z_s|^2{\rm d}s\bigg|\F_{r\wedge\tau_m}\right]\\
&\hspace*{0.6cm}+\E\left[\int_{t\wedge\tau}^{\tau} e^{2 \int_{0}^{s}\overline{a}_r{\rm d}r}\left((2\beta-2)\bar\mu_s+(\rho-\overline{\rho})\bar\nu_s^2\right)|Y_s|^2{\rm d}s\bigg|\F_{r\wedge\tau_m}\right]\\
&\ \ \leq
\E\left[\sup_{s\in[t\wedge\tau,\tau]}\left(e^{2 \int_{0}^{s}\overline{a}_r{\rm d}r}|Y_s|^2\right)\bigg|\F_{r\wedge\tau_m}\right]+2\E\left[\int_{t\wedge\tau}^{\tau}e^{2 \int_{0}^{s}\overline{a}_r{\rm d}r}|Y_s|f_s{\rm d}s\bigg|\F_{r\wedge\tau_m}\right]\\
&\ \ \leq
2\E\left[\sup_{s\in[t\wedge\tau,\tau]}\left(e^{2 \int_{0}^{s}\overline{a}_r{\rm d}r}|Y_s|^2\right)\bigg|\F_{r\wedge\tau_m}\right]+\E\left[\left(\int_{t\wedge\tau}^{\tau}e^{ \int_{0}^{s}\overline{a}_r{\rm d}r}f_s{\rm d}s\right)^2\bigg|\F_{r\wedge\tau_m}\right].
\end{split}
\end{align}
Thus, since $Y_\cdot\in S_\tau^{2}(\overline{a}_\cdot;\R^k)$, the desired assertion \eqref{2.02} follows by sending $m\rightarrow\infty$ and using the martingale convergence theorem (see Corollary A.9 in Appendix C of \cite{Oksendal2005}) in both sides of \eqref{0032.2*}, and then $Z_\cdot\in M_\tau^{2}(\overline{a}_\cdot;\R^{k\times d})$.

Furthermore, since both \eqref{22.4} and \eqref{2.2} are also true on $[t\wedge\tau,\tau], 0\leq t<+\infty$, we have
\begin{align}\label{08}
\begin{split}
&|Y_{t\wedge\tau}|^2e^{2 \int_{0}^{t}\overline{a}_r{\rm d}r}+\left(1-\frac{1}{\overline{\rho}}\right) \int_{t\wedge\tau}^\tau e^{2 \int_{0}^{s}\overline{a}_r{\rm d}r}|Z_s|^2{\rm d}s\\
&\hspace*{0.6cm}+\int_{t\wedge\tau}^\tau e^{2 \int_{0}^{s}\overline{a}_r{\rm d}r}\left((2\beta-2)\bar\mu_s+(\rho-\overline{\rho})\bar\nu_s^2\right)|Y_s|^2{\rm d}s\\
&\ \ \leq |\xi|^2e^{2 \int_{0}^{\tau}\overline{a}_r{\rm d}r}+2\int_{t\wedge\tau}^\tau e^{2 \int_{0}^{s}\overline{a}_r{\rm d}r}|Y_s|f_s{\rm d}s-2\int_{t\wedge\tau}^\tau e^{2 \int_{0}^{s}\overline{a}_r{\rm d}r}\langle Y_s,Z_s{\rm d}B_s\rangle.
\end{split}
\end{align}
It follows from the BDG inequality that
$$\left(\int_{0}^{t\wedge\tau} e^{2 \int_{0}^{s}\overline{a}_r{\rm d}r}\langle Y_s,Z_s{\rm d}B_s\rangle\right)_{t\geq0}$$ is a uniformly integrable martingale. And, since $(Y_\cdot,Z_\cdot)\in H_\tau^{2}(\overline{a}_\cdot;\R^k\times\R^{k\times d})$, by virtue of Theorem 1 in \cite{Ren2008BDG}, we know that for each $0\leq r\leq t<+\infty$,
\begin{align}\label{2.66}
\begin{split}
&2\E\left[\sup_{s\in[t\wedge\tau,\tau]}\left|\int_{s\wedge\tau}^\tau e^{2 \int_{0}^{s}\overline{a}_r{\rm d}r}\langle Y_s,Z_s{\rm d}B_s\rangle\right|\bigg|\F_{r\wedge\tau}\right]\\
&\ \ \leq  4\sqrt{2}\E\left[\left(\int_{t\wedge\tau}^\tau e^{4 \int_{0}^{s}\overline{a}_r{\rm d}r}|Y_s|^2|Z_s|^2{\rm d}s\right)^{\frac{1}{2}}\bigg|\F_{r\wedge\tau}\right]\\
&\ \ \leq  \frac{1}{2}\E\left[\sup_{s\in[t\wedge\tau,\tau]} \left(e^{2 \int_{0}^{s}\overline{a}_r{\rm d}r}|Y_s|^2\right)\bigg|\F_{r\wedge\tau}\right]+16\E\left[\int_{t\wedge\tau}^\tau e^{2 \int_{0}^{s}\overline{a}_r{\rm d}r}|Z_s|^2{\rm d}s\bigg|\F_{r\wedge\tau}\right]<+\infty.
\end{split}
\end{align}
Then, in view of the last inequality, by taking supremum with respect to $s$ and the conditional mathematical expectation in both sides of inequality \eqref{08} we obtain that for each $0\leq r\leq t<+\infty$,
\begin{align}\label{008}
\begin{split}
\E\left[\int_{t\wedge\tau}^\tau e^{2 \int_{0}^{s}\overline{a}_r{\rm d}r}|Z_s|^2{\rm d}s\bigg|\F_{r\wedge\tau}\right]\leq \frac{\overline{\rho}}{\overline{\rho}-1}\E\left[|\xi|^2e^{2 \int_{0}^{\tau}\overline{a}_r{\rm d}r}+2\int_{t\wedge\tau}^\tau e^{2 \int_{0}^{s}\overline{a}_r{\rm d}r}|Y_s|f_s{\rm d}s\bigg|\F_{r\wedge\tau}\right]
\end{split}
\end{align}
and
\begin{align}\label{0008}
\begin{split}
&\frac{1}{2}\E\left[\sup_{s\in[t\wedge\tau,\tau]}\left(e^{2 \int_{0}^{s}\overline{a}_r{\rm d}r}|Y_s|^2\right)\bigg|\F_{r\wedge\tau}\right]\\
&\hspace*{0.6cm}+\E\left[\int_{t\wedge\tau}^{\tau} e^{2 \int_{0}^{s}\overline{a}_r{\rm d}r}\left((2\beta-2)\bar\mu_s+(\rho-\overline{\rho})\bar\nu_s^2\right)|Y_s|^{2}{\rm d}s\bigg|\F_{r\wedge\tau}\right]\\
&\ \ \leq  \E\left[|\xi|^2e^{2 \int_{0}^{\tau}\overline{a}_r{\rm d}r}+2\int_{t\wedge\tau}^\tau e^{2 \int_{0}^{s}\overline{a}_r{\rm d}r}|Y_s|f_s{\rm d}s\bigg|\F_{r\wedge\tau}\right]+16\E\left[\int_{t\wedge\tau}^\tau e^{2 \int_{0}^{s}\overline{a}_r{\rm d}r}|Z_s|^2{\rm d}s\bigg|\F_{r\wedge\tau}\right].
\end{split}
\end{align}
Combining \eqref{008} and \eqref{0008} we deduce that for each $0\leq r\leq t<+\infty$,
\begin{align}\label{2.013}
&\E\left[\sup_{s\in[t\wedge\tau,\tau]}\left(e^{2 \int_{0}^{s}\overline{a}_r{\rm d}r}|Y_s|^2\right)\bigg|\F_{r\wedge\tau}\right] +\E\left[\int_{t\wedge\tau}^\tau e^{2 \int_{0}^{s}\overline{a}_r{\rm d}r}|Z_s|^2{\rm d}s\bigg|\F_{r\wedge\tau}\right]\nonumber\\ &\hspace{0.6cm}+\E\left[\int_{t\wedge\tau}^{\tau} e^{2 \int_{0}^{s}\overline{a}_r{\rm d}r}\left((2\beta-2)\bar\mu_s+(\rho-\overline{\rho})\bar\nu_s^2\right)|Y_s|^{2}{\rm d}s\bigg|\F_{r\wedge\tau}\right]\nonumber\\
&\ \ \leq \left(2+\frac{33\overline{\rho}}{\overline{\rho}-1}\right)\left(\E\left[|\xi|^2e^{2 \int_{0}^{\tau}\overline{a}_r{\rm d}r}\bigg|\F_{r\wedge\tau}\right] +2\E\left[\int_{t\wedge\tau}^\tau e^{2 \int_{0}^{s}\overline{a}_r{\rm d}r}|Y_s|f_s{\rm d}s\bigg|\F_{r\wedge\tau}\right]\right).
\end{align}
Hence, \eqref{2.04} comes true.

Finally, by virtue of inequality $2ab\leq \frac{1}{2}a^2+2b^2$, we have that for each $0\leq r\leq t<+\infty$,
\begin{align*}
&2\left(2+\frac{33\overline{\rho}}{\overline{\rho}-1}\right)\E\left[\int_{t\wedge\tau}^\tau e^{2 \int_{0}^{s}\overline{a}_r{\rm d}r}|Y_s|f_s{\rm d}s\bigg|\F_{r\wedge\tau}\right]\nonumber\\
&\ \ \leq \frac{1}{2}\E\left[\sup_{s\in[t\wedge\tau,\tau]}\left(e^{2 \int_{0}^{s}\overline{a}_r{\rm d}r}|Y_s|^2\right)\bigg|\F_{r\wedge\tau}\right]
+2\left(2+\frac{33\overline{\rho}}{\overline{\rho}-1}\right)^2\E\left[\left(\int_{t\wedge\tau}^\tau e^{\int_{0}^{s}\overline{a}_r{\rm d}r}f_s{\rm d}s\right)^2\bigg|\F_{r\wedge\tau}\right],
\end{align*}
from which together with \eqref{2.013} the desired results \eqref{2.03*} and \eqref{2.03} follow immediately.
\end{proof}

\begin{rmk}\label{rmk:222}
From the above proof, it can be observed that the restrictive condition of $\beta\geq 1$ and $\rho>1$ is necessary in order to obtain the desired estimate in \cref{pro:1.1}. Furthermore, by letting $\beta>1$ and $\overline{\rho}=\frac{1+\rho}{2}$ in \cref{pro:1.1}, from \eqref{2.03*} we can deduce that
\begin{align}\label{1}
\E\left[\int_{0}^{\tau} e^{2 \int_{0}^{s}\overline{a}_r{\rm d}r}\overline{a}_s|Y_s|^{2}{\rm d}s\right]<+\infty,
\end{align}
which is required in \cite{BenderKohlmann2000} and \cite{Li2023}. However, generally speaking, the last assertion \eqref{1} does not hold for $\beta=1$. Therefore, some relevant arguments in \cite{BenderKohlmann2000} and \cite{Li2023} are inavailable for the case of $\beta=1$.
\end{rmk}

%

\section{Main result}
\setcounter{equation}{0}

In this section, we will establish an existence and uniqueness theorem, a continuous dependence property and a stability theorem for the weighted $L^2$-solution of BSDE \eqref{BSDE1.1}, where the generator $g$ satisfies the stochastic monotonicity condition with general growth in the first unknown variable $y$ and the stochastic Lipschitz continuity condition in the second unknown variable $z$. Also, we will give some examples and remarks to illustrate that these results strengthen some existing works. Finally, we derive the nonlinear Feynman-Kac formulas in our framework about parabolic PDEs and elliptic PDEs.

\subsection{Existence and uniqueness\vspace{0.2cm}}

Recalling $\mu_\cdot$ and $\nu_\cdot$ are two given $(\F_t)$-progressively measurable nonnegative processes with $a_t:=\beta\mu_t+\frac{\rho}{2}\nu_t^2$ satisfying $\int_0^\tau a_t{\rm d}t<+\infty$. Let us first introduce the following assumptions on the generator $g:\Omega\times[0,\tau]\times \R^k\times \R^{k\times d}\mapsto \R^k$.

\begin{enumerate}
\renewcommand{\theenumi}{(H1)}
\renewcommand{\labelenumi}{\theenumi}
\item\label{A:H1} $\Dis \E\left[\left(\int_0^\tau e^{\int_{0}^{s}a_r{\rm d}r}|g(s,0,0)|{\rm d}s\right)^2\right]<+\infty.$
\renewcommand{\theenumi}{(H2)}
\renewcommand{\labelenumi}{\theenumi}
 \item\label{A:H2} ${\rm d}\mathbb{P}\times{\rm d} t-a.e.$, $g(\omega,t,\cdot,z)$ is continuous for each $z\in\R^{k\times d}$.
\renewcommand{\theenumi}{(H3)}
\renewcommand{\labelenumi}{\theenumi}
\item\label{A:H3} $g$ has a general growth in $y$, i.e., there exists an $(\F_t)$-progressively measurable non-increasing process $(\alpha_t)_{t\in[0,\tau]}$ taking values in $(0,1]$ such that for each $r\in \R_+$, it holds that
\begin{align}\label{3.27*}
\E\left[\int_0^\tau e^{\beta \int_{0}^{t}\mu_s{\rm d}s}\psi_{r}^{\alpha_\cdot}(t)dt\right]<+\infty
\end{align}
with
$$\psi_{r}^{\alpha_\cdot}(t):=\sup_{|y|\leq r\alpha_t}\left|g(t,y,0)-g(t,0,0)\right|, \ t\in[0,\tau].$$
\renewcommand{\theenumi}{(H4)}
\renewcommand{\labelenumi}{\theenumi}
\item\label{A:H4} $g$ satisfies a stochastic monotonicity condition in $y$, i.e., for each $y_1, \ y_2\in\R^k$ and $z\in\R^{k\times d}$,
$$\left\langle y_1-y_2,g(\omega,t,y_1,z)-g(\omega,t,y_2,z)\right\rangle\leq \mu_t(\omega)|y_1-y_2|^2, \ t\in[0,\tau].$$
\renewcommand{\theenumi}{(H5)}
\renewcommand{\labelenumi}{\theenumi}
\item\label{A:H5} $g$ satisfies a stochastic Lipschitz continuity condition in $z$, i.e., for each $y\in\R^k$ and $z_1,z_2\in\R^{k\times d}$,
$$\left|g(\omega,t,y,z_1)-g(\omega,t,y,z_2)\right|\leq \nu_t(\omega)|z_1-z_2|, \ t\in[0,\tau].$$
\end{enumerate}

The following theorem is the main result of this subsection establishing a general existence and uniqueness result of the weighted $L^2$-solutions for BSDEs under the above assumptions \ref{A:H1}-\ref{A:H5}.

\begin{thm}[\bf Existence and uniqueness]\label{thm:3.1}
Let $\xi\in L_\tau^2(a_\cdot;\R^k)$ and the generator $g$ satisfy \ref{A:H1}-\ref{A:H5}. Then, BSDE \eqref{BSDE1.1} admits a unique weighted $L^2$-solution $(y_t,z_t)_{t\in[0,\tau]}$ in $H_\tau^2(a_\cdot;\R^{k}\times\R^{k\times d})$.

\begin{proof}[\bf Proof]
The proof of existence part is postponed in Section 4 due to its complexity. Here we only prove the uniqueness part. Assume that $(y_\cdot,z_\cdot)$ and $(y_\cdot',z_\cdot')$ are two  weighted $L^2$-solutions in $H_\tau^2(a_\cdot;\R^{k}\times\R^{k\times d})$ of BSDE \eqref{BSDE1.1}. It is obvious that $(\tilde{y}_\cdot,\tilde{z}_\cdot):=(y_\cdot-y_\cdot',z_\cdot-z_\cdot')$ is a weighted $L^2$-solution in $H_\tau^2(a_\cdot;\R^{k}\times\R^{k\times d})$ to the following BSDE:
\begin{align}\label{2.5}
\tilde{y}_t=\int_t^\tau\tilde{g}(s,\tilde{y}_s,\tilde{z}_s){\rm d}s-\int_t^\tau\tilde{z}_s{\rm d}B_s, \ t\in[0,\tau],
\end{align}
where for each $(y,z)\in\R^k\times\R^{k\times{d}}$, $$\tilde{g}(t,y,z)=g(t,y+y_t',z+z_t')-g(t,y_t',z_t'), \ t\in[0,\tau].$$
It follows from \ref{A:H4} and \ref{A:H5} that for each $(y,z)\in\R^k\times\R^{k\times{d}}$,
\begin{align*}
\begin{split}
\left<\hat{y},\tilde{g}(t,y,z)\right>&=\left<\hat{y},g(t,y+y_t',z+z_t')-g(t,y_t',z+z_t')+g(t,y_t',z+z_t')-g(t,y_t',z_t')\right>\\
&\leq \mu_t|y|+\nu_t|z|, \ t\in[0,\tau].
\end{split}
\end{align*}
This implies that assumption (A) is satisfied by the generator $\tilde{g}$ with $\bar\mu_\cdot=\mu_\cdot$, $\bar\nu_\cdot=\nu_\cdot$ and $f_\cdot\equiv0$. Thus, it follows from \eqref{2.03} in \cref{pro:1.1} with $r=t=0$ that
\begin{align*}
\begin{split}
&\E\left[\sup_{s\in[0,\tau]}\left(e^{2 \int_{0}^{s}(\beta\mu_r+\frac{\rho}{2} \nu_r^2){\rm d}r}|\tilde{y}_s|^2\right)\right]+\E\left[\int_{0}^{\tau}e^{2 \int_{0}^{s}(\beta\mu_r+\frac{\rho}{2}\nu_r^2){\rm d}r}|\tilde{z}_s|^2{\rm d}s\right]=0.
\end{split}
\end{align*}
Therefore, $(\tilde{y}_t,\tilde{z}_t)_{t\in[0,\tau]}=(0,0)$. The proof of the uniqueness part is then completed.
\end{proof}
\end{thm}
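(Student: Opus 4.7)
The plan is to establish existence via a double approximation scheme, then exhibit the approximating solutions as a Cauchy sequence in $H_\tau^2(a_\cdot;\R^k\times\R^{k\times d})$ by means of the a priori estimate \cref{pro:1.1}. I would first reduce from the general growth condition \ref{A:H3} to a stochastic-Lipschitz problem via truncation, solve the Lipschitz problem by appealing to existing theory for BSDEs with stochastic-Lipschitz generators and random terminal time in weighted spaces, and then close the argument by a priori bounds on the truncation error.

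More concretely, for each $n\geq 1$ I would adopt the truncation idea of \cite{Briand2003} and \cite{XiaoandFan2017} and replace the $y$-dependence of the zero-$z$-part of $g$ outside the random ball $\{|y|\leq n\alpha_t\}$ by a smooth cutoff, setting
\begin{align*}
g_n(t,y,z):=g(t,y,z)-g(t,y,0)+\phi_n(t,y),
\end{align*}
where $\phi_n(t,y)$ agrees with $g(t,y,0)$ on $\{|y|\leq n\alpha_t\}$ and is globally Lipschitz in $y$ (with some random constant). Then $g_n$ still satisfies \ref{A:H4} and \ref{A:H5} with the same $\mu_\cdot,\nu_\cdot$, and its "generator at zero" is dominated by $|g(t,0,0)|+\psi_n^{\alpha_\cdot}(t)$, which by \ref{A:H1} and \ref{A:H3} lies in the correct weighted space. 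A further mollification in $y$ renders the generator deterministically Lipschitz in $y$, so the BSDE with this doubly approximated generator and terminal data $(\xi,\tau)$ admits a unique weighted $L^2$-solution by the stochastic-Lipschitz theory in the spirit of \cite{KarouiHuang1997} and \cite{Li2023}.

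I would then pass to the limit in the mollification parameter using locally uniform convergence together with \cref{pro:1.1} and a uniform-continuity-on-compact-sets argument in the style of \cite{FanJiang2013}, obtaining a weighted $L^2$-solution $(y^n_\cdot,z^n_\cdot)$ for the BSDE with generator $g_n$. To pass $n\to\infty$, I would apply \cref{pro:1.1} to the difference $(y^n-y^p,z^n-z^p)$, which solves a BSDE whose driver satisfies assumption \ref{A:A} with an inhomogeneity $f_\cdot$ supported essentially on $\{|y^n|>n\alpha_\cdot\}\cup\{|y^p|>p\alpha_\cdot\}$; uniform a priori bounds on $\|y^n_\cdot\|_{a_\cdot,c}$ together with Chebyshev's inequality should force this $f_\cdot$ to vanish in the appropriate weighted norm, showing $(y^n_\cdot,z^n_\cdot)$ is Cauchy in $H_\tau^2(a_\cdot;\R^k\times\R^{k\times d})$. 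The limit will solve BSDE \eqref{BSDE1.1} via \ref{A:H2} and dominated convergence.

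The main obstacle will be this last step: controlling the truncation error in the weighted norm under only the minimal integrability $\int_0^\tau a_t\,{\rm d}t<\infty$, without any exponential moment of $\mu_\cdot$ or $\nu_\cdot^2$. The truncation threshold must be tuned to the process $\alpha_\cdot$ exactly as in \eqref{3.27*}, where $\psi^{\alpha_\cdot}_r(t)$ is integrated against the single weight $e^{\beta\int_0^t\mu_s{\rm d}s}$ rather than the squared one, and it is the interplay between the non-increasing process $\alpha_\cdot$, the shape of \eqref{3.27*}, and the a priori estimate \eqref{2.03} that should close the argument. This seems to be the technical innovation required to reach the borderline case $\beta=1$ highlighted in \cref{rmk:222}, where some standard arguments of \cite{BenderKohlmann2000} and \cite{Li2023} are no longer available.
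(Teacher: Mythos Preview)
Your overall plan—truncate in $y$, mollify, solve an approximated problem, pass to the limit via the a priori estimate—is in the right spirit, but it has two concrete gaps. First, your truncated generator $g_n(t,y,z)=g(t,y,z)-g(t,y,0)+\phi_n(t,y)$ does \emph{not} inherit \ref{A:H4} with the same $\mu_\cdot$: the subtraction of $g(t,y,0)$ destroys the one-sided bound, since monotonicity only gives an \emph{upper} bound on $\langle y_1-y_2,g(t,y_1,0)-g(t,y_2,0)\rangle$, not a lower one. The paper instead multiplies $g(t,y)-g(t,0)$ by the cutoff $\theta_r^{\alpha_\cdot}(t,|y|)$ of \eqref{Matrix}; the product structure is exactly what makes the monotonicity computation \eqref{4.35*} go through, at the cost of a shifted coefficient $\tilde\mu_t=\mu_t+ne^{-t}$ that leaves the weighted space unchanged. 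Second, and more seriously, you propose to solve the approximated stochastic-Lipschitz problem by citing \cite{KarouiHuang1997} or \cite{Li2023}; but those results require $\beta>\frac{1+\sqrt 5}{2}$ (or stronger), so this would be circular for the borderline $\beta\geq 1$ you are after.

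The paper circumvents both obstacles by a structural step you do not mention: it first \emph{freezes} the $z$-argument, replacing $g(s,y_s,z_s)$ by $g(s,y_s,V_s)$ for a given $V_\cdot\in M_\tau^2(a_\cdot;\R^{k\times d})$, and proves existence for this $z$-independent problem in the smaller weighted space $H_\tau^2(\beta\mu_\cdot;\R^k\times\R^{k\times d})$ (\cref{pro:2.1}). The approximating Lipschitz BSDE is then solved in the \emph{unweighted} space via \cite{ZChenBWang2000JAMS}, and membership in the weighted space is recovered by the direct bound \eqref{4.16**} (and later \eqref{22.101}), which is where the non-increasing process $\alpha_\cdot$ and the factor $e^{-t}$ in the truncation are essential. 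A separate argument, \eqref{0.37}--\eqref{32.82}, upgrades the solution from $H_\tau^2(\beta\mu_\cdot)$ to $H_\tau^2(a_\cdot)$. Only then is the $z$-dependence restored, via a contraction map on $M_\tau^2(a_\cdot;\R^{k\times d})$ alone (not on the full product space); as \cref{rmk:4.4} stresses, this one-variable fixed point is precisely what buys $\beta\geq 1$ rather than $\beta>\frac{1+\sqrt 5}{2}$. Your sketch never isolates this decoupling, and without it the Cauchy argument you describe cannot reach the stated range of $\beta,\rho$.
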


The following \cref{cor:3.2} is a direct consequence of \cref{thm:3.1}. It indicates that the assertion of \cref{thm:3.1} can include BSDEs with stochastic Lipshitz generators as its particular case. For this, let us introduce the following stochastic Lipschitz assumption of the generator $g$:

\begin{enumerate}
\renewcommand{\theenumi}{(SL)}
\renewcommand{\labelenumi}{\theenumi}
\item\label{A:SL} $g$ is stochastic Lipschitz continuous in $(y,z)$, i.e., for each $y_1, y_2\in\R^k$ and $z_1, z_2\in\R^{k\times d}$, we have
    $$|g(\omega,t,y_1,z_1)-g(\omega,t,y_2,z_2)|\leq \mu_t|y_1-y_2|+\nu_t|z_1-z_2|, \ t\in[0,\tau].\vspace{-0.1cm}$$
\end{enumerate}

\begin{cor}\label{cor:3.2}
Let $\xi\in L_\tau^2(a_\cdot;\R^k)$ and the generator $g$ satisfy \ref{A:H1} and \ref{A:SL}. Then, BSDE \eqref{BSDE1.1} admits a unique weighted $L^2$-solution $(y_t,z_t)_{t\in[0,\tau]}$ in the space of $H_\tau^2(a_\cdot;\R^{k}\times\R^{k\times d})$.
\end{cor}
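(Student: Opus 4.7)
The plan is to deduce \cref{cor:3.2} directly from \cref{thm:3.1} by showing that, under \ref{A:SL}, the generator $g$ automatically satisfies each of the structural assumptions \ref{A:H2}--\ref{A:H5}. Since \ref{A:H1} is already imposed and $\xi\in L_\tau^2(a_\cdot;\R^k)$ is assumed, once this reduction is carried out, \cref{thm:3.1} delivers the unique weighted $L^2$-solution and the proof is complete.

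The routine verifications are \ref{A:H2}, \ref{A:H4}, and \ref{A:H5}. Continuity in $y$ (condition \ref{A:H2}) is immediate from the Lipschitz estimate in \ref{A:SL} applied with $z$ fixed. Condition \ref{A:H5} is literally contained in \ref{A:SL}. For \ref{A:H4}, the Cauchy--Schwarz inequality together with \ref{A:SL} gives, for any $y_1,y_2\in\R^k$ and $z\in\R^{k\times d}$,
\[
\langle y_1-y_2,g(t,y_1,z)-g(t,y_2,z)\rangle \leq |y_1-y_2|\cdot|g(t,y_1,z)-g(t,y_2,z)|\leq \mu_t|y_1-y_2|^2,
\]
which is exactly \ref{A:H4}.

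The only delicate point, and the main obstacle, is to produce a weight $\alpha_\cdot$ that witnesses \ref{A:H3}. The naive bound supplied by \ref{A:SL} is $\psi_r^{\alpha_\cdot}(t)\leq r\alpha_t\mu_t$, so we need
\[
\E\left[\int_0^\tau e^{\beta\int_0^t\mu_s{\rm d}s}\alpha_t\mu_t\,{\rm d}t\right]<+\infty,
\]
and no integrability is assumed on $\int_0^\tau\mu_s{\rm d}s$ (only the $\mathbb{P}$-a.s.\ finiteness coming from $\int_0^\tau a_t{\rm d}t<+\infty$). My proposal is to choose
\[
\alpha_t:=\frac{1}{\bigl(1+e^{\beta\int_0^t\mu_s{\rm d}s}\bigr)^2},\qquad t\in[0,\tau].
\]
Because $t\mapsto e^{\beta\int_0^t\mu_s{\rm d}s}$ is continuous, non-decreasing, adapted, and at least $1$, the process $\alpha_\cdot$ is $(\F_t)$-progressively measurable, non-increasing, and takes values in $(0,\tfrac14]\subset(0,1]$. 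Substituting $u=e^{\beta\int_0^t\mu_s{\rm d}s}$ in the integral yields the pathwise bound
\[
\int_0^\tau \frac{\mu_t e^{\beta\int_0^t\mu_s{\rm d}s}}{\bigl(1+e^{\beta\int_0^t\mu_s{\rm d}s}\bigr)^2}{\rm d}t
=\frac{1}{\beta}\int_1^{e^{\beta\int_0^\tau\mu_s{\rm d}s}}\frac{{\rm d}u}{(1+u)^2}\leq \frac{1}{2\beta},
\]
so the expectation in \eqref{3.27*} is uniformly bounded, and \ref{A:H3} holds.

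With all of \ref{A:H2}--\ref{A:H5} verified, I invoke \cref{thm:3.1} to obtain the unique weighted $L^2$-solution $(y_\cdot,z_\cdot)\in H_\tau^2(a_\cdot;\R^k\times\R^{k\times d})$ of BSDE~\eqref{BSDE1.1}, which concludes the corollary. The only nontrivial input is the pathwise integration trick used to construct $\alpha_\cdot$; everything else is a mechanical unpacking of the stochastic Lipschitz inequality.
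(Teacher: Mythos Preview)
Your proof is correct and follows the same strategy as the paper: reduce to \cref{thm:3.1} by checking that \ref{A:SL} implies \ref{A:H2}--\ref{A:H5}, the only nontrivial point being the construction of the witness $\alpha_\cdot$ for \ref{A:H3}. The paper takes
\[
\alpha_t=\frac{e^{-t}}{1+\sup_{0\le s\le t}\bigl(e^{\beta\int_0^s\mu_r{\rm d}r}\mu_s\bigr)},
\]
so that $e^{\beta\int_0^t\mu_s{\rm d}s}\mu_t\alpha_t\le e^{-t}$ pointwise, whereas you take $\alpha_t=(1+e^{\beta\int_0^t\mu_s{\rm d}s})^{-2}$ and evaluate the integral exactly via the substitution $u=e^{\beta\int_0^t\mu_s{\rm d}s}$. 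Both yield a deterministic uniform bound; your construction is a bit cleaner here (no running supremum, no extra $e^{-t}$ factor), while the paper's template has the advantage that it generalizes verbatim to the broader growth condition \ref{A:H3a} treated in \cref{rmk:3.3}(iv), where one must normalize by an arbitrary process $\tilde\mu_\cdot$ rather than by $\mu_\cdot$ itself.
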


\begin{proof}
It is clear that \ref{A:H2}, \ref{A:H4} and \ref{A:H5} are all true if assumption \ref{A:SL} is in force. Then, according to \cref{thm:3.1}, it suffices to verify that \ref{A:H3} is also true. In fact, it follows \ref{A:SL} that for each $y\in\R^k$,
\begin{align}\label{3.21*}
|g(t,y,0)-g(t,0,0)|\leq \mu_t|y|, \ t\in[0,\tau].
\end{align}
Then, by taking
$$\alpha_t:=\frac{e^{-t}}{1+\sup\limits_{0\leq s\leq t}\left(e^{\beta\int_0^s\mu_r{\rm d}r}\mu_s\right)}\in(0,1), \ t\in[0,\tau],$$
which is an $(\F_t)$-progressively measurable non-increasing process taking values in $(0,1]$, we obtain that for each $r\in \R_+$, it holds that
\begin{align*}
\begin{split}
\E\left[\int_0^\tau e^{\beta\int_0^t\mu_s{\rm d}s}\psi_{r}^{\alpha_\cdot}(t)dt\right]\leq r\E\left[\int_0^\tau e^{\beta \int_{0}^{t}\mu_s{\rm d}s}\mu_t\alpha_t dt\right]
\leq r\E\left[\int_0^\tau e^{-t}dt\right]\leq r<+\infty.
\end{split}
\end{align*}
This implies that \ref{A:H3} comes true. The proof is then complete.
\end{proof}

With respect to \cref{cor:3.2} and \cref{thm:3.1}, we would like to make the following important remark. The following three assumptions related to \ref{A:H1} and \ref{A:H3} on the generator $g$ will be used in it.\vspace{0.2cm}

\begin{enumerate}
\renewcommand{\theenumi}{(H1a)}
\renewcommand{\labelenumi}{\theenumi}
\item\label{A:H1a}
$\Dis \E\left[\int_0^\tau e^{2\theta\int_0^s (\mu_r+\frac{1}{2}\nu^2_r) {\rm d}r}\left|\frac{g(s,0,0)}{\sqrt{\mu_r+\nu^2_r}}\right|^2{\rm d}s\right]<+\infty$, where $\theta>1$ and $\mu_\cdot+\nu^2_\cdot\geq\varepsilon>0$.\vspace{0.1cm}
\end{enumerate}

\begin{enumerate}
\renewcommand{\theenumi}{(H3*)}
\renewcommand{\labelenumi}{\theenumi}
\item\label{A:H3*}
For each $r\in \R_+$, it holds that
$$
\E\left[\left(\int_0^\tau e^{\int_{0}^{t}\mu_s{\rm d}s}\psi_{r}(t)dt\right)^2\right]<+\infty
$$
with
$$\psi_{r}(t):=\sup_{|y|\leq r}\left|g(t,y,0)-g(t,0,0)\right|, \ t\in[0,\tau].\vspace{-0.1cm}$$
\end{enumerate}

\begin{enumerate}
\renewcommand{\theenumi}{(H3a)}
\renewcommand{\labelenumi}{\theenumi}
\item\label{A:H3a} There exists an $(\F_t)$-progressively measurable nonnegative process $(\tilde{\mu}_t)_{t\in[0,\tau]}$ without any integrability requirement such that for each $y\in \R^k$,
    $$|g(t,y,0)-g(t,0,0)|\leq \tilde{\mu}_t\varphi(|y|), \ t\in[0,\tau],$$
    where $\varphi(\cdot):\R_+\rightarrow \R_+$ is a nondecreasing convex function with $\varphi(0)=0$.
\end{enumerate}

\begin{rmk}\label{rmk:3.3}
(i) It is clear that \cref{cor:3.2} extends Theorem 2.1 in \cite{Li2023}, where the constants $\beta$ and $\rho$ appearing in the process $a_\cdot$ are required to satisfy that $\beta>\frac{1+\sqrt{5}}{2}$ and $\rho=2\beta$.\vspace{0.2cm}

(ii) We emphasize that \cref{cor:3.2} also extends Theorem 4.1 in \cite{O2020}, where the assumption \ref{A:H1} is replaced with the stronger assumption \ref{A:H1a}. Indeed, if assumption \ref{A:H1a} holds for some $\theta>1$, then by H\"{o}lder's inequality we have, taking $\beta:=\frac{3\theta+1}{4}>1$ and $\rho:=\frac{\theta+1}{2}>1$,
\begin{align}
&\E\left[\left(\int_0^\tau e^{\int_0^s(\beta\mu_r+\frac{\rho}{2}\nu^2_r){\rm d}r}|g(s,0,0)|{\rm d}s\right)^2\right]\nonumber\\
&\ \ = \E\left[\left(\int_0^\tau e^{\theta\int_0^s (\mu_r+\frac{1}{2}\nu^2_r) {\rm d}r}\frac{|g(s,0,0)|}{\sqrt{\mu_s+\nu^2_s}}\cdot\sqrt{\mu_s+\nu^2_s} e^{-\frac{\theta-1}{4}\int_0^s(\mu_r+\nu^2_r){\rm d}r}{\rm d}s\right)^2\right]\nonumber\\
&\ \ \leq \E\left[\int_0^\tau e^{2\theta\int_0^s (\mu_r+\frac{1}{2}\nu^2_r) {\rm d}r}\left|\frac{g(s,0,0)}{\sqrt{\mu_s+\nu^2_s}}\right|^2{\rm d}s\cdot\int_0^ \tau (\mu_s+\nu^2_s) e^{-\frac{\theta-1}{2}\int_0^s(\mu_r+\nu^2_r){\rm d}r}{\rm d}s\right]\nonumber\\
&\ \ \leq \frac{2}{\theta-1}\E\left[\int_0^\tau e^{2\theta\int_0^s (\mu_r+\frac{1}{2}\nu^2_r) {\rm d}r}\left|\frac{g(s,0,0)}{\sqrt{\mu_r+\nu^2_r}}\right|^2{\rm d}s\right]<+\infty.\nonumber
\end{align}
On the other hand, it is clear that \ref{A:H1} can not imply \ref{A:H1a} since $\mu_\cdot+\nu^2_\cdot\geq\varepsilon>0$ is not always true. Consequently, our assumption \ref{A:H1} is strictly weaker than \ref{A:H1a}. Furthermore, we mention that a stronger assumption than \ref{A:H1a} was used in Theorem 3 in \cite{BenderKohlmann2000} and some subsequent related works on BSDEs with stochastic Lipschitz generators (see for example \cite{KarouiHuang1997}, \cite{WangRanChen2007}, \cite{Wen(2011)}, \cite{HuandRen2011}, \cite{HuLanYing(2012)}, \cite{Owo(2015),Owo(2017)} and
\cite{MarzougueandEl Otmaniv2017}), where the constant $1/2$ appearing in \ref{A:H1a} is required to be replaced with $1$, and the $\theta$ is required to be sufficient large. Hence, \cref{cor:3.2} improves Theorem 3 in \cite{BenderKohlmann2000}, and other related works can be accordingly  improved. \vspace{0.2cm}

(iii) Assumption \ref{A:H3*} was usually used in the existing literature on the study of BSDEs with stochastic monotonicity generators, see for example Theorem 5.30, Theorem 5.57, and Corollary 5.59 in \cite{PardouxandRascanu(2014)}. It is clear that \ref{A:H3*} can imply \ref{A:H3} with $\beta=1$ and $\alpha_\cdot\equiv 1$, while \ref{A:H3} can not imply \ref{A:H3*}. In fact, if the generator $g$ satisfies \eqref{3.21*}, then it follows from the proof of \cref{cor:3.2} that this $g$ satisfies \ref{A:H3}, while it does not satisfy \ref{A:H3*} under the situation that
$$
\E\left[\left(\int_0^\tau e^{\int_{0}^{t}\mu_s{\rm d}s}\mu_t dt\right)^2\right]=+\infty.
$$
Consequently, our assumption \ref{A:H3} is strictly weaker than \ref{A:H3*}, and then our \cref{thm:3.1} unifies and strengthens those above-mentioned results in \cite{PardouxandRascanu(2014)}, Theorems 3.6 and 4.2 in \cite{Bahlali2004} for the cases of both constant terminal time and finite terminal time, and Theorem 5.2 in \cite{O2020} for the case of $p=2$ and finite terminal time.\vspace{0.2cm}

(iv) It can be easily checked that assumption \ref{A:H3a} extends \eqref{3.21*}, and can be regarded, in some sense, as a generalization of the corresponding assumptions c(ii) in \cite{Pardoux1999} (see \eqref{a1.5} in the introduction), (H5') in \cite{Briand2003}, (H2) in \cite{Royer2004} and (H3') in \cite{Xiao2015}. Now, we prove that assumption \ref{A:H3a} can imply \ref{A:H3}. In fact, let \ref{A:H3a} hold with $\varphi(\cdot)$ and $\tilde{\mu}_\cdot$. Since $\varphi(\cdot)$ is a convex function with $\varphi(0)=0$, we have for each $\lambda\in(0,1)$ and $r\in \R_+$,
$$\varphi(\lambda r)\leq \lambda \varphi(r).$$
Then, considering \ref{A:H3a} and the last inequality, we know that for each $r\in \R_+$,
$$\psi_{r}^{\alpha_\cdot}(t):=\sup_{|y|\leq r\alpha_t}\left\{ \left|g(t,y,0)-g(t,0,0)\right|\right\}\leq \tilde{\mu}_t\varphi(r \alpha_t)\leq\tilde{\mu}_t\alpha_t\varphi(r), \ \ t\in[0,\tau].$$
Thus, by taking
$$\alpha_t:=\frac{e^{-t}}{1+\sup\limits_{0\leq s\leq t}\left(e^{\beta\int_0^s\mu_r{\rm d}r}\tilde{\mu}_s\right)}\in(0,1), \ t\in[0,\tau],$$
which is an $(\F_t)$-progressively measurable non-increasing process taking values in $(0,1]$,
we obtain that for each $r\in \R_+$, it holds that
\begin{align*}
\begin{split}
\E\left[\int_0^\tau e^{\beta\int_0^t\mu_s{\rm d}s}\psi_{r}^{\alpha_\cdot}(t)dt\right]\leq \E\left[\int_0^\tau e^{\beta \int_{0}^{t}\mu_s{\rm d}s}\tilde{\mu}_t\alpha_t\varphi(r)dt\right]
\leq\varphi(r)\E\left[\int_0^\tau e^{-t}dt\right]\leq \varphi(r)<+\infty.
\end{split}
\end{align*}
This implies that \ref{A:H3} comes true. Consequently, our assumption \ref{A:H3} is also strictly weaker than \ref{A:H3a}, while \ref{A:H3a} is more easily to be verified than \ref{A:H3}, and will be used several times in subsequent subsection. Notably, \cref{thm:3.1} strengthens and improves Theorem 3.4 in \cite{DarlingandPardoux1997} and Theorem 4.1 in \cite{Pardoux1999} for the case of finite terminal time, where the processes $\mu_\cdot$ and $\nu_\cdot$ are two constants independent of $(t,\omega)$.
\end{rmk}

In order to compare perfectly \cref{thm:3.1} with some related existing results, let us further introduce the following assumption of the generator $g$ and verify another corollary of \cref{thm:3.1}.\vspace{0.2cm}

\begin{enumerate}
\renewcommand{\theenumi}{(H1b)}
\renewcommand{\labelenumi}{\theenumi}
\item\label{A:H1b} $\Dis \E\left[\left(\int_0^\tau |g(s,0,0)|{\rm d}s\right)^2\right]<+\infty.$
\renewcommand{\theenumi}{(H3b)}
\renewcommand{\labelenumi}{\theenumi}
\item\label{A:H3b} There exists an $(\F_t)$-progressively measurable non-increasing process $(\alpha_t)_{t\in[0,\tau]}$ taking values in $(0,1]$ such that for each $r\in \R_+$, it holds that
$$\E\left[\int_0^\tau \psi_{r}^{\alpha_\cdot}(t)dt\right]<+\infty$$
with
$$\psi_{r}^{\alpha_\cdot}(t):=\sup_{|y|\leq r\alpha_t} \left|g(t,y,0)-g(t,0,0)\right|, \ t\in[0,\tau].$$
\renewcommand{\theenumi}{(H3b*)}
\renewcommand{\labelenumi}{\theenumi}
\item\label{A:H3b*}For each $r\in \R_+$, we have
$$\E\left[\int_0^\tau \psi_{r}(t)dt\right]<+\infty$$
with
$$\psi_{r}(t):=\sup_{|y|\leq r} \left|g(t,y,0)-g(t,0,0)\right|, \ t\in[0,\tau].\vspace{0.2cm}$$
\end{enumerate}

\begin{cor}\label{cor:3.4}
Let $\xi\in L_\tau^2(0;\R^k)$ and the generator $g$ satisfy \ref{A:H1b}, \ref{A:H2}, \ref{A:H3b}, \ref{A:H4} and \ref{A:H5} with
\begin{align}\label{BoundM}
\int_0^\tau\left(\mu_t+\nu_t^2\right){\rm d}t\leq M
\end{align}
for some constant $M>0$. Then, BSDE \eqref{BSDE1.1} admits a unique solution $(y_t,z_t)_{t\in[0,\tau]}$ in $H_\tau^2(0;\R^{k}\times\R^{k\times d})$.
\end{cor}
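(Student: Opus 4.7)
The plan is to reduce the claim directly to \cref{thm:3.1} by exploiting the boundedness condition \eqref{BoundM}, which makes the weighted spaces $H_\tau^2(a_\cdot;\R^{k}\times\R^{k\times d})$ and $L_\tau^2(a_\cdot;\R^k)$ coincide (up to equivalent norms) with the unweighted spaces $H_\tau^2(0;\R^{k}\times\R^{k\times d})$ and $L_\tau^2(0;\R^k)$.

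First I would fix any admissible pair of constants, for instance $\beta=1$ and $\rho=2$, so that $a_t=\mu_t+\nu_t^2$. Then by \eqref{BoundM} we have $\int_0^\tau a_t\,{\rm d}t\leq M$ a.s., and consequently
$$1\leq e^{\int_0^t a_r\,{\rm d}r}\leq e^{M},\qquad 1\leq e^{\beta\int_0^t\mu_s\,{\rm d}s}\leq e^{M},\qquad t\in[0,\tau].$$
This uniform two-sided bound on the weight immediately gives the norm equivalence
$$\|\cdot\|_{a_\cdot,c}\asymp\|\cdot\|_{0,c},\quad \|\cdot\|_{a_\cdot}\asymp\|\cdot\|_{0},\quad \|\cdot\|_{a_\cdot}\asymp\|\cdot\|_{0}\text{ on } L_\tau^2,$$
so that a process lies in the weighted space if and only if it lies in the corresponding unweighted one, and the terminal value $\xi\in L_\tau^2(0;\R^k)$ automatically belongs to $L_\tau^2(a_\cdot;\R^k)$.

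Next I would verify that the hypotheses \ref{A:H1}--\ref{A:H5} of \cref{thm:3.1} are fulfilled. Assumptions \ref{A:H2}, \ref{A:H4} and \ref{A:H5} are granted by hypothesis. For \ref{A:H1}, using the upper bound $e^{\int_0^s a_r\,{\rm d}r}\leq e^{M}$ and \ref{A:H1b} yields
$$\E\left[\left(\int_0^\tau e^{\int_0^s a_r\,{\rm d}r}|g(s,0,0)|\,{\rm d}s\right)^2\right]\leq e^{2M}\,\E\left[\left(\int_0^\tau|g(s,0,0)|\,{\rm d}s\right)^2\right]<+\infty.$$
For \ref{A:H3}, with the same process $\alpha_\cdot$ provided by \ref{A:H3b}, the bound $e^{\beta\int_0^t\mu_s\,{\rm d}s}\leq e^{M}$ together with \ref{A:H3b} gives for every $r\in\R_+$,
$$\E\left[\int_0^\tau e^{\beta\int_0^t\mu_s\,{\rm d}s}\psi_r^{\alpha_\cdot}(t)\,{\rm d}t\right]\leq e^{M}\,\E\left[\int_0^\tau\psi_r^{\alpha_\cdot}(t)\,{\rm d}t\right]<+\infty.$$

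Finally I would apply \cref{thm:3.1} to conclude that BSDE \eqref{BSDE1.1} admits a unique weighted $L^2$-solution in $H_\tau^2(a_\cdot;\R^{k}\times\R^{k\times d})$, and by the equivalence of the weighted and unweighted norms discussed above, this is exactly the desired unique solution in $H_\tau^2(0;\R^{k}\times\R^{k\times d})$. No step really presents an obstacle here; the only point requiring a little care is the norm equivalence and checking that the integrability statements \ref{A:H1} and \ref{A:H3} genuinely follow from their unweighted counterparts \ref{A:H1b} and \ref{A:H3b} under \eqref{BoundM}, which is straightforward.
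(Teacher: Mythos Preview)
Your proposal is correct and follows essentially the same approach as the paper: under \eqref{BoundM} the weighted and unweighted spaces coincide and \ref{A:H1b}, \ref{A:H3b} become equivalent to \ref{A:H1}, \ref{A:H3}, so the result follows directly from \cref{thm:3.1}. The paper's proof is more terse, simply asserting these equivalences, while you spell out the details (fixing $\beta=1$, $\rho=2$ and bounding the weight by $e^M$); the substance is the same.
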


\begin{proof}
Note that under the condition \eqref{BoundM}, assumptions \ref{A:H1b} and \ref{A:H3b} are respectively equivalent to assumptions \ref{A:H1} and \ref{A:H3}, and that $L_\tau^2(0;\R^k)$ and $H_\tau^2(0;\R^{k}\times\R^{k\times d})$ are respectively identical with $L_\tau^2(a_\cdot;\R^k)$ and $H_\tau^2(a_\cdot;\R^{k}\times\R^{k\times d})$.
The desired assertion follows immediately from \cref{thm:3.1}.
\end{proof}

At the end of this subsection, we make the following remark concerning \cref{cor:3.4} and \cref{thm:3.1}.

\begin{rmk}\label{rmk:3.5}
(i) Assumption \ref{A:H3b*} was usually used on the study of BSDEs with (weak) monotonicity generators, see for example \cite{Briand2003}, \cite{FanJiang2013}, \cite{Fan2015JMAA} and \cite{Xiao2015}. It is clear that due to the presence of process $\alpha_\cdot$, \ref{A:H3b*} is strictly stronger than \ref{A:H3b}. See \cref{ex:3.8} in Section 3.2 for more details. Consequently, for the case of BSDEs with infinite terminal time, \cref{cor:3.4} strengthens Theorem 3.1 in \cite{Liu2020} as well as Theorem 1.2 in \cite{ZChenBWang2000JAMS} and
Theorem 3.1 in \cite{Xiao2015}, where the processes $\mu_\cdot$ and $\nu_\cdot$ are two deterministic functions independent of $\omega$; and for the case of BSDEs with constant terminal time, \cref{cor:3.4} improves Theorem 2.2 in \cite{DarlingandPardoux1997}, Theorem 2.2 in \cite{Pardoux1999} and Theorem 4.2 in \cite{Briand2003} with $p=2$, where the processes $\mu_\cdot$ and $\nu_\cdot$ are two constants independent of $(t,\omega)$.\vspace{0.2cm}

(ii) We would mention the following several works closely related to \cref{thm:3.1}. First, \cite{Yong2006} considered the solvability of linear BSDEs with unbounded stochastic coefficients $\mu_\cdot$ and $\nu_\cdot$, where $\int_0^\tau\mu_t {\rm d}t$ and $\int_0^\tau\nu_t^2 {\rm d}t$ are assumed to have a certain exponential moment, but $\mu_\cdot$ can take values on $\R$. Second, \cite{Bahlali2015} dealt with the solvability of BSDEs with super-linear growth generators under a local condition covering some BSDEs with stochastic coefficients $\mu_\cdot$ and $\nu_\cdot$, see Example 3 in \cite{Bahlali2015} for more details. It should be mentioned that in this example the $\int_0^\tau(\mu_t+\nu_t^2){\rm d}t$ is also supposed to have a certain exponential moment and the generator $g$ is required to grow at most sub-quadratically in the unknown variable $y$, see (H.3) and (H.4) therein. Third, \cite{BriandandConfortola2008} studied BSDEs with stochastic coefficients $\mu_\cdot$ and $\nu_\cdot$ satisfying certain integrability condition associated with the bound mean oscillation martingale, see also \cite{Perninge2023} for further study on reflected BSDEs. In comparison with these works, the generator $g$ of BSDEs in our \cref{thm:3.1} may have a very general growth in $y$ (see assumption \ref{A:H3}), and the stochastic coefficients $\mu_\cdot$ and $\nu_\cdot$ only need to satisfy $\int_0^\tau(\mu_t+\nu_t^2){\rm d}t<+\infty$, without any restriction of finite moment.\vspace{0.2cm}

(iii) It is supposed in assumption \ref{A:H3} that $\alpha_\cdot$ is a non-increasing process taking values in $(0,1]$. This will play a crucial role in the proof of existence part of \cref{thm:3.1}. However, if there exists an $(\F_t)$-progressively measurable, continuous and positive process $\alpha_\cdot$ such that for each $r\in \R_+$, \eqref{3.27*} holds, then for each $r\in \R_+$, it also holds with $\alpha_\cdot$ being replaced with
$$
\overline{\alpha}_t:=\left(\inf\limits_{0\leq s\leq t}\alpha_s\right)\wedge1, \ t\in [0,\tau],
$$
which is an $(\F_t)$-progressively measurable non-increasing process taking values in $(0,1]$. Consequently, the $\alpha_\cdot$ in \ref{A:H3} can be supposed to be an $(\F_t)$-progressively measurable, continuous and positive process.
\end{rmk}

\subsection{Examples\vspace{0.2cm}}

In this subsection, we will provide several examples, to which \cref{thm:3.1} can be applied, but none of existing results including those in \cite{Li2023}, \cite{O2020}, \cite{Bahlali2015}, \cite{Xiao2015}, \cite{PardouxandRascanu(2014)}, \cite{Briand2003}, \cite{Pardoux1999}, \cite{DarlingandPardoux1997} and \cite{PardouxPeng1990SCL} could. In order to facilitate the understanding of these examples, we first introduce the following lemma.

\begin{lem}\label{lem:3.8}
For each $\lambda\in[0,1]$, we have
$$|e^{\lambda x}-1|\leq\lambda(e^{|x|}+|x|-1), \ \forall x\in \R.$$
\end{lem}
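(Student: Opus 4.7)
The plan is to establish the stronger inequality $|e^{\lambda x}-1|\leq \lambda(e^{|x|}-1)$ via the power-series expansion of the exponential and then conclude by the trivial observation $e^{|x|}-1\leq e^{|x|}+|x|-1$.

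The main step is to write
\[
e^{\lambda x}-1=\sum_{n=1}^{\infty}\frac{(\lambda x)^n}{n!}
\]
and apply the triangle inequality to get $|e^{\lambda x}-1|\leq \sum_{n=1}^{\infty}\lambda^n |x|^n/n!$. Because $\lambda\in[0,1]$, one has $\lambda^n\leq \lambda$ for every integer $n\geq 1$, so the right-hand side is bounded above by $\lambda\sum_{n=1}^{\infty}|x|^n/n!=\lambda(e^{|x|}-1)$. Adding the non-negative quantity $\lambda|x|$ to the right side then yields the stated bound.

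An equivalent route, avoiding series manipulation, is to split on the sign of $x$ and use convexity. For $x\geq 0$, convexity of $s\mapsto e^{sx}$ on $[0,1]$ gives $e^{\lambda x}\leq (1-\lambda)+\lambda e^{x}$, hence $|e^{\lambda x}-1|=e^{\lambda x}-1\leq \lambda(e^{x}-1)$. For $x<0$, the map $\lambda\mapsto 1-e^{\lambda x}$ is concave on $[0,1]$, vanishes at $\lambda=0$, and has derivative $|x|$ there, so concavity yields $1-e^{\lambda x}\leq \lambda|x|$. In both cases the bound is dominated by $\lambda(e^{|x|}+|x|-1)$.

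There is no genuine obstacle here, since the stated inequality is strictly weaker than what either argument actually produces. I would present the power-series derivation, as it handles both signs of $x$ uniformly in one display and is the shortest.
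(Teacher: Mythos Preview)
Your proof is correct. The power-series argument you propose as your main route is a genuinely different (and slightly slicker) approach than the paper's: the paper splits on the sign of $x$, using $1-e^{\lambda x}\leq -\lambda x=\lambda|x|$ for $x\leq 0$ (from $e^u\geq 1+u$) and the convexity inequality $f(\lambda x)\leq \lambda f(x)$ for $f(u)=e^u-1$ when $x>0$, then combines the two bounds. Your series computation handles both signs at once and in fact yields the uniformly sharper bound $|e^{\lambda x}-1|\leq \lambda(e^{|x|}-1)$, from which the stated inequality follows trivially. The ``equivalent route'' you sketch as an alternative is essentially the paper's own proof, so either presentation is fine; the power-series version has the advantage of brevity and of making the slack in the lemma's right-hand side transparent.
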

\begin{proof}
Given $\lambda\in[0,1]$. We need to consider two cases: $x\leq0$ and $x>0$. For $x\leq0$, it is clear that
$$|e^{\lambda x}-1|=1-e^{\lambda x}\leq-\lambda x=\lambda|x|.$$
On the other hand, since $f(x):=e^x-1, x\in \R$ is a convex function with $f(0)=0$, we can deduce that for each $x>0$,
$$|e^{\lambda x}-1|=e^{\lambda x}-1=f(\lambda x)\leq\lambda f(x)=\lambda(e^x-1)=\lambda(e^{|x|}-1).$$
Then the desired assertion follows immediately from the last two inequalities.
\end{proof}

Next, we give some examples to show that our results can cover the known results, but not vice versa. Firstly, the generator of the following example satisfies \ref{A:H3b}, but does not satisfy \ref{A:H3b*}. Hence, \ref{A:H3b} is strictly weaker than \ref{A:H3b*} due to the presence of $(\alpha_t)_{t\in[0,\tau]}$, as mentioned in (i) of \cref{rmk:3.5}.

\begin{ex}\label{ex:3.8}
Let $\tau$ be a bounded stopping time, i.e., $\tau\leq T$ for a real $T>0$, and for each $y=(y_1,\cdots,y_k)\in \R^k$ and $z\in \R^{k\times d}$, let
$$g(t,y,z):=(g^1(t,y,z),\cdots,g^k(t,y,z)), \ t\in[0,\tau],$$
where for each $i=1,\cdots,k,$
$$g^i(t,y,z):=e^{-|B_t|^3y_i}+|z|.$$
It is straightforward to verify that assumptions \ref{A:H1b}, \ref{A:H2}, \ref{A:H4} and \ref{A:H5} with $|g(\cdot,0,0)|\equiv \sqrt{k}$, $\mu_\cdot\equiv 0$ and $\nu_\cdot\equiv 1$ are satisfied by this generator $g$. Let
$$\alpha_t:=\frac{1}{1+\sup\limits_{0\leq s\leq t}|B_s|^3}, \ t\in[0,\tau].$$
Then $(\alpha_t)_{t\in[0,\tau]}$ is an $(\F_t)$-progressively measurable non-increasing process taking values in $(0,1]$, and
\begin{align}\label{3.93966}
\begin{split}
\psi_{r}^{\alpha_\cdot}(t):=&\sup_{|y|\leq r\alpha_t}\left|g(t,y,0)-g(t,0,0)\right|
\leq\sup_{|y_i|\leq r\alpha_t}\left(\mathop{\sum}\limits_{i=1}^{k}\bigg|e^{-|B_t|^3y_i}-1\bigg|\right)\\
\leq&k\sup_{|x|\leq r\alpha_t|B_t|^3} \bigg|e^{-x}-1\bigg|
\leq k\sup_{|x|\leq r}\bigg|e^{-x}-1\bigg|
\leq k(e^r+r-1), \ t\in[0,\tau],
\end{split}
\end{align}
where the definition of $\alpha_\cdot$ and \cref{lem:3.8} are respectively used in the last two steps of \eqref{3.93966}. Therefore, \ref{A:H3b} also holds for this $g$. It then follows from \cref{cor:3.4} that for each $\xi\in L_\tau^2(0;\R^k)$, BSDE \eqref{BSDE1.1} admits a solution in $H_\tau^2(0;\R^{k}\times\R^{k\times d})$. However, $g$ does not satisfy assumption \ref{A:H3b*}. In fact, for each $r>0$, by letting $y=(-r,0,\cdots,0)$, we have
$$\psi_{r}(t):=\sup_{|y|\leq r}\left|g(t,y,0)-g(t,0,0)\right|\geq e^{r|B_t|^3}-1$$
and then
$$
\E\left[\int_0^\tau \psi_{r}(t)dt\right]=+\infty.\vspace{0.2cm}
$$
Hence, the above conclusion cannot be obtained by  Theorem 4.2 in \cite{Briand2003} and Theorem 3.1 in \cite{Xiao2015}.
\end{ex}

\begin{ex}\label{ex:3.12}
Let $k=2$, and $\tau$ be a finite stopping time, i.e., $P(\tau<+\infty)=1$. Consider the following generator: for $(y,z)\in \R^2\times \R^{2\times d}$ with $y=(y_1,y_2)$,
$$g(t,y,z):=|B_t|^3\begin{bmatrix}
-y_1^5+y_2\\
-y_2^3-y_1\\
\end{bmatrix}+|B_t|\begin{bmatrix}
\sin|z|\\
|z|\\
\end{bmatrix}, \ t\in[0,\tau].$$
Obviously, this generator $g$ satisfies assumptions \ref{A:H1}, \ref{A:H2}, \ref{A:H4} and \ref{A:H5} with
$$g(t,0,0)\equiv0, \ \mu_t=|B_t|^3, \  \nu_t=|B_t|,$$
and $a_t:=\beta|B_t|^3+\frac{\rho}{2}|B_t|^2$ satisfying $\int_0^\tau a_t{\rm d}t<+\infty$. Moreover, $g$ also satisfies \ref{A:H3a} with $$\tilde{\mu}_t=|B_t|^3 \ and \ \varphi(x)=\sqrt{2}(x^5+x^3+x), \ x\geq0.$$
It follows from \cref{thm:3.1} together with (iv) of \cref{rmk:3.3} that for each $\xi\in L_\tau^2(a_\cdot;\R^k)$, for example, $\xi=e^{-\int_0^\tau a_t{\rm d}t}B_\tau$. BSDE \eqref{BSDE1.1} admits a unique weighted $L^2$-solution in the space of $H_\tau^2(a_\cdot;\R^{2}\times\R^{2\times d})$. However, note that $g$ has a polynomial growth in $y$ and $\E[e^{\varepsilon\int_0^\tau a_t{\rm d}t}]=+\infty$ for each $\varepsilon>0$, the proceeding assertion can not be derived by any existing results including those in \cite{PardouxPeng1990SCL}, \cite{Briand2003}, \cite{Xiao2015}, \cite{Li2023} and \cite{Bahlali2015}.
\end{ex}

For simplicity, we assume that $k=1$ in the following examples.

\begin{ex}\label{ex:3.9}
Let $k=1$, $\tau$ be a finite stopping time, i.e., $P(\tau<+\infty)=1$, and for each $(y,z)\in\R\times\R^{d}$,
$$g(t,y,z):=e^{-|B_t|^4y}+|B_t|(|y|+|z|)-1, \ t\in[0,\tau].$$
It is not hard to verify that $g$ satisfies assumptions \ref{A:H1}-\ref{A:H5} with
$$g(t,0,0)\equiv0, \ \mu_t=|B_t|, \  \nu_t=|B_t|,$$
and $a_t:=\beta|B_t|+\frac{\rho}{2}|B_t|^2$ with $\int_0^\tau a_t{\rm d}t<+\infty$ due to the fact that $\tau$ is a finite stopping time. In fact, it is clear that \ref{A:H1}, \ref{A:H2}, \ref{A:H4} and \ref{A:H5} hold for $g$. Next we prove that $g$ also satisfies \ref{A:H3}. Let $$\alpha_t:=\frac{\lambda_t}{\sup\limits_{0\leq s\leq t}(1+|B_s|)^4}, \ t\in[0,\tau],$$
where $\lambda_t=e^{-\beta\int_0^t\mu_s{\rm d}s-t}$. Then $(\alpha_t)_{t\in[0,\tau]}$ is an $(\F_t)$-progressively measurable non-increasing process taking values in $(0,1]$, and for each $r\in \R_+$, it holds that
\begin{align}\label{3.939}
\begin{split}
\psi_{r}^{\alpha_\cdot}(t):=&\sup_{|y|\leq r\alpha_t}\left\{ \left|g(t,y,0)-g(t,0,0)\right|\right\}\\
=&\sup_{|y|\leq r\alpha_t}\bigg|e^{-|B_t|^4y}+|B_t||y|-1\bigg| \leq\sup_{|y|\leq r\alpha_t}\bigg|e^{-|B_t|^4y}-1\bigg|+r|B_t|\alpha_t\\
=&\sup_{|x|\leq \frac{r\alpha_t|B_t|^4}{\lambda_t}} \bigg|e^{-\lambda_tx}-1\bigg|+r|B_t|\alpha_t
\leq\sup_{|x|\leq r}\bigg|e^{\lambda_tx}-1\bigg|+r\lambda_t, \ t\in[0,\tau].
\end{split}
\end{align}
Then, by \cref{lem:3.8} we know that
$$\E\left[\int_0^\tau e^{\beta\int_0^t\mu_s{\rm d}s} \psi_r^{\alpha_\cdot}(t)dt\right]\leq\E\left[\int_0^\tau e^{\beta\int_0^t\mu_s{\rm d}s}\lambda_t(e^r+2r-1)dt\right]\leq(e^r+2r-1)\int_0^{+\infty}e^{-t}{\rm d}t<+\infty.$$
It then follows from Theorem \ref{thm:3.1} that for each $\xi\in L_\tau^2(a_\cdot;\R^k)$, BSDE \eqref{BSDE1.1} admits a unique weighted $L^2$-solution in the space of $H_\tau^2(a_\cdot;\R\times\R^{1\times d})$. However, since $g$ does not satisfy assumptions \ref{A:SL} and \ref{A:H3*} by a similar analysis to that in \cref{ex:3.8}, to the best of our knowledge, this conclusion cannot be obtained by any existing results.
\end{ex}

\begin{ex}\label{ex:3.10}
Let $k=1$, $\tau\equiv+\infty$, and for each $(y,z)\in\R\times\R^{d}$,
$$g(t,y,z):=e^{-\frac{\rho}{2}\int_0^{t\wedge1}|B_s|{\rm d}s-t}e^{y^-}+\sqrt{|B_t|{\bf 1}_{0\leq t\leq 1}+\frac{1}{1+t^2}}\sin|z|, \ , \ t\in[0,\tau].$$
It can be directly verified that $g$ satisfies assumptions \ref{A:H1}, \ref{A:H2}, \ref{A:H4} and \ref{A:H5} with
$$g(t,0,0)=e^{-\frac{\rho}{2}\int_0^{t\wedge1}|B_s|{\rm d}s-t}, \ \mu_t\equiv0, \  \nu_t=\sqrt{|B_t|{\bf 1}_{0\leq t\leq 1}+\frac{1}{1+t^2}},$$
and $a_t:=\frac{\rho}{2}\left(|B_t|{\bf 1}_{0\leq t\leq 1}+\frac{1}{1+t^2}\right)$ satisfying that $\int_0^\tau a_t{\rm d}t<+\infty$. Furthermore, $g$ also satisfies \ref{A:H3a} with
$$\tilde{\mu}_t=e^{-\frac{\rho}{2}\int_0^{t\wedge1}{|B_s|\rm d}s-t} \ and  \ \varphi(x)=e^x-1, \ x\geq0.$$
Then, by \cref{thm:3.1} together with (iv) of \cref{rmk:3.3} we know that for each $\xi\in L_\tau^2(a_\cdot;\R^k)$, BSDE \eqref{BSDE1.1} admits a unique weighted $L^2$-solution in $H_\tau^2(a_\cdot;\R\times\R^{1\times d})$. Clearly, this assertion can not be obtained from Theorem 2.1 of \cite{Li2023}, Theorem 3.1 of \cite{Xiao2015} and any known results.
\end{ex}

\begin{ex}\label{ex:3.11}
Let $k=1$, $\tau$ be a stopping time taking values in $[0,+\infty]$ and $\sigma$ be a finite stopping time. For each $(y,z)\in\R\times\R^{d}$, let
$$g(t,y,z):=|B_t|^6(1-e^{y^{+}})+|B_t|{\bf 1}_{0\leq t\leq \sigma}\sin y+\sqrt{|B_t|{\bf 1}_{0\leq t\leq \sigma}}|z|, \ t\in[0,\tau].$$
It is straightforward to verify that this generator $g$ satisfies assumptions \ref{A:H1}, \ref{A:H2}, \ref{A:H4} and \ref{A:H5} with
$$g(t,0,0)\equiv0, \ \mu_t=|B_t|{\bf 1}_{0\leq t\leq \sigma}, \ \nu_t=\sqrt{|B_t|{\bf 1}_{0\leq t\leq \sigma}},$$
and $a_t:=(\beta+\frac{\rho}{2})|B_t|{\bf 1}_{0\leq t\leq \sigma}$ satisfying $\int_0^\tau a_t{\rm d}t<+\infty$ due to the fact that $\sigma$ is a finite stopping time. Furthermore, it can also be verified that $g$ satisfies assumption \ref{A:H3a} with $\tilde{\mu}_t=(|B_t|+1)^6$ and $\varphi(x)=e^x+x-1, \ x\geq0$. It follows from \cref{thm:3.1} that for each $\xi\in L_\tau^2(a_\cdot;\R^k)$, BSDE \eqref{BSDE1.1} has a unique weighted $L^2$-solution in $H_\tau^2(a_\cdot;\R\times\R^{1\times d})$. This conclusion cannot be obtained by any existing results.
\end{ex}

\begin{rmk}\label{rmk:3.11}
The above examples illustrate that our assumptions are strictly weaker than the corresponding ones in some existing works, and then \cref{thm:3.1} can completely cover some known results, but the converse assertions are not true.
\end{rmk}

\subsection{Continuous dependence and stability\vspace{0.2cm}}

The following theorem gives a general continuous dependence property for the weighted $L^2$-solutions with respect to parameters of BSDEs. This result extends Theorem 3.1 in \cite{Li2023}.

\begin{thm}[\bf Continuous dependence]\label{thm:3.2}
Let $\xi, \xi'\in L_\tau^2(a_\cdot;\R^k)$, and $g$ and $g'$ be two generators of BSDEs satisfying assumptions \ref{A:H4} and \ref{A:H5}. Assume that $(Y_\cdot,Z_\cdot)$ and $(Y_\cdot',Z_\cdot')$ are a weighted $L^2$-solutions of BSDE$(\xi,\tau,g)$ and BSDE$(\xi',\tau,g')$, respectively, in the space of $H_\tau^2(a_\cdot;\R^{k}\times\R^{k\times d})$.
Then there exists a uniform constant $C>0$ such that
\begin{align*}
\begin{split}
&\E\left[\sup_{s\in[0,\tau]}\left(e^{2 \int_{0}^{s}a_r{\rm d}r}|Y_s-Y_s'|^2\right)\right]+\E\left[\int_0^{\tau} e^{2\int_0^s a_r{\rm d}r}|Z_s-Z_s'|^2{\rm d}s\right]\\
&\ \ \leq  C\left(\E\left[e^{2 \int_{0}^{\tau}a_r{\rm d}r}|\xi-\xi'|^2\right]+\E\left[\left(\int_0^\tau e^{\int_0^sa_r{\rm d}r}\left|g(t,Y'_t,Z'_t)-g'(t,Y'_t,Z'_t)\right|{\rm d}s\right)^2\right]\right).
\end{split}
\end{align*}
\end{thm}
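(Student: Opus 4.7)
The plan is to reduce the continuous dependence estimate to the a priori estimate \cref{pro:1.1} by a standard linearization trick. Set $\hat Y_t := Y_t - Y_t'$, $\hat Z_t := Z_t - Z_t'$, and $\hat\xi := \xi-\xi'$. A direct subtraction shows that $(\hat Y_\cdot,\hat Z_\cdot) \in H_\tau^2(a_\cdot;\R^k\times \R^{k\times d})$ solves the BSDE
\begin{equation*}
\hat Y_t = \hat\xi + \int_t^\tau \tilde g(s,\hat Y_s,\hat Z_s)\,{\rm d}s - \int_t^\tau \hat Z_s\,{\rm d}B_s, \quad t\in[0,\tau],
\end{equation*}
where the new generator
\begin{equation*}
\tilde g(s,y,z) := g(s,\,y+Y_s',\,z+Z_s') - g'(s,Y_s',Z_s')
\end{equation*}
carries all the information about the mismatch of terminal values, dynamics, and drivers into a single object whose structural properties can be read off directly from \ref{A:H4} and \ref{A:H5}.

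Next I would verify that $\tilde g$ satisfies assumption \ref{A:A} of \cref{pro:1.1} with $\bar\mu_\cdot = \mu_\cdot$, $\bar\nu_\cdot = \nu_\cdot$, and
\begin{equation*}
f_s := |g(s,Y_s',Z_s') - g'(s,Y_s',Z_s')|.
\end{equation*}
To this end, decompose $\tilde g(s,y,z)$ into three pieces by inserting and subtracting $g(s,Y_s',z+Z_s')$ and $g(s,Y_s',Z_s')$. Pairing with $\hat y$: the first piece is bounded by $\mu_s |y|$ via the monotonicity \ref{A:H4} (after dividing by $|y|$), the second by $\nu_s |z|$ via the Lipschitz condition \ref{A:H5}, and the third by $f_s$ trivially. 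Thus
\begin{equation*}
\langle \hat y, \tilde g(s,y,z)\rangle \leq f_s + \mu_s |y| + \nu_s |z|,\qquad (y,z)\in\R^k\times\R^{k\times d},
\end{equation*}
which is exactly \ref{A:A} with the process $\overline{a}_\cdot = a_\cdot = \beta\mu_\cdot + \frac{\rho}{2}\nu_\cdot^2$.

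Finally, I would apply estimate \eqref{2.03} of \cref{pro:1.1} to $(\hat Y_\cdot,\hat Z_\cdot)$ with $\overline{\rho} = \rho$, $r=t=0$, and $\bar a_\cdot = a_\cdot$. This instantly yields the desired bound with a universal constant $C>0$. A minor technical point to address is the hypothesis in \cref{pro:1.1} that $\E[(\int_0^\tau e^{\int_0^s a_r dr}f_s\,{\rm d}s)^2]<+\infty$: if this quantity is infinite, the target inequality is vacuously true, so without loss of generality we may assume it to be finite, making \cref{pro:1.1} applicable. The main obstacle is really just the careful bookkeeping of the three-term decomposition of $\tilde g$ to check \ref{A:A}; once that is in place, the a priori estimate does all the work, and no independent use of It\^{o}'s formula or the BDG inequality is needed.
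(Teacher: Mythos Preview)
Your proposal is correct and follows essentially the same approach as the paper: both introduce the difference process $(\hat Y,\hat Z)$, define the shifted generator $\tilde g(s,y,z)=g(s,y+Y_s',z+Z_s')-g'(s,Y_s',Z_s')$, verify assumption \ref{A:A} via the three-term decomposition using \ref{A:H4} and \ref{A:H5}, and conclude by invoking \eqref{2.03} of \cref{pro:1.1} with $r=t=0$. The paper also handles the ``$f$ not square-integrable'' case by the same without-loss-of-generality reduction you describe.
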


\begin{proof}
Without loss of generality, we can assume that
$$\E\left[\left(\int_0^\tau e^{\int_0^sa_r{\rm d}r}\left|g(t,Y'_t,Z'_t)-g'(t,Y'_t,Z'_t)\right|{\rm d}s\right)^2\right]<+\infty.$$
Let\vspace{-0.2cm}
\begin{align*}
\begin{split}
\tilde{\xi}:=\xi-\xi', \ \tilde{Y}_\cdot:=Y_\cdot-Y_\cdot', \ \tilde{Z}_\cdot:=Z_\cdot-Z_\cdot'.
\end{split}
\end{align*}
Then $(\tilde{Y}_\cdot,\tilde{Z}_\cdot)$ is a weighted $L^2$-solution of the following BSDE:
\begin{align*}
\tilde{Y}_t=\tilde{\xi}+\int_t^\tau \tilde{g}(s,\tilde{Y}_s,\tilde{Z}_s){\rm d}s-\int_t^\tau\tilde{Z}_s{\rm d}B_s, \ t\in[0,\tau],
\end{align*}
where for each $(y,z)\in\R^k\times\R^{k\times{d}}$,
$$\tilde{g}(t,y,z):=g(t,y+Y_t',z+Z_t')-g'(t,Y_t',Z_t'), \ t\in[0,\tau].$$
It follows from \ref{A:H4} and \ref{A:H5} that for each $(y,z)\in\R^k\times\R^{k\times{d}}$,
\begin{align*}
\begin{split}
\left<\hat{y},\tilde{g}(t,y,z)\right>&=\left<\hat{y},g(t,y+Y_t',z+Z_t')-g'(t,Y_t',Z_t')\right>\\
&\leq \mu_t|y|+\nu_t|z|+|g(t,Y_t',Z_t')-g'(t,Y_t',Z_t')|, \ t\in[0,\tau].
\end{split}
\end{align*}
This implies that assumption (A) is satisfied by the generator $\tilde{g}$ with $\bar\mu_t=\mu_t$, $\bar\nu_t=\nu_t$ and $$f_t=|g(t,Y_t',Z_t')-g'(t,Y_t',Z_t')|.$$
Thus, by using \eqref{2.03} of \cref{pro:1.1} we can get the desired result.
\end{proof}

Next, we present and prove the following stability theorem for the weighted $L^2$-solutions of BSDEs. It improves Theorem 2.1 in \cite{HuPeng1997}.

\begin{thm}[\bf Stability]\label{thm:3.3}
For each $n\geq1$, let $\xi^n, \xi\in L_\tau^2(a_\cdot;\R^k)$, and both $g^n$ and $g$ satisfy assumptions \ref{A:H4} and \ref{A:H5}, and assume that $(Y_\cdot^n,Z_\cdot^n)$ and $(Y_\cdot,Z_\cdot)$ are weighted $L^2$-solutions of BSDE $(\xi^n,\tau,g^n)$ and BSDE $(\xi,\tau,g)$ in the space of $H_\tau^2(a_\cdot;\R^{k}\times\R^{k\times d})$. If
$$\lim_{n\rightarrow\infty}\E\left[e^{2 \int_{0}^{\tau}a_r{\rm d}r}|\xi^n-\xi|^2+\left(\int_0^\tau e^{\int_0^sa_r{\rm d}r}\left|g^n(s,Y_s,Z_s)-g(s,Y_s,Z_s)\right|{\rm d}s\right)^2\right]=0,$$
then we have
$$
\lim_{n\rightarrow\infty}\E\left[\sup_{s\in[0,\tau]}\left(e^{2 \int_{0}^{s}a_r{\rm d}r}|Y_s^n-Y_s|^2\right)+\int_0^{\tau} e^{2\int_0^s a_r{\rm d}r}|Z_s^n-Z_s|^2{\rm d}s\right]=0.\vspace{0.2cm}
$$
\end{thm}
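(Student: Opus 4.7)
The plan is to derive Theorem 3.3 as an essentially immediate consequence of the continuous dependence estimate just proved in Theorem 3.2, applied to each pair $(\xi^n, g^n)$ versus $(\xi, g)$ and then letting $n \to \infty$.

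More concretely, for each fixed $n$ I would apply Theorem 3.2 with the substitutions $\xi \leftarrow \xi^n$, $\xi' \leftarrow \xi$, $g \leftarrow g^n$, $g' \leftarrow g$, and the corresponding weighted $L^2$-solutions $(Y_\cdot, Z_\cdot) \leftarrow (Y_\cdot^n, Z_\cdot^n)$ and $(Y'_\cdot, Z'_\cdot) \leftarrow (Y_\cdot, Z_\cdot)$. The hypotheses of Theorem 3.2 are satisfied: both $\xi^n$ and $\xi$ lie in $L_\tau^2(a_\cdot;\R^k)$, both generators $g^n$ and $g$ satisfy \ref{A:H4}-\ref{A:H5} by assumption, and both pairs belong to $H_\tau^2(a_\cdot;\R^k \times \R^{k\times d})$. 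Crucially, the ``residue'' term in the upper bound provided by Theorem 3.2 takes the form $g(t,Y'_t,Z'_t)-g'(t,Y'_t,Z'_t)$ evaluated along the \emph{primed} solution; under the chosen substitution this becomes $g^n(t,Y_t,Z_t)-g(t,Y_t,Z_t)$, evaluated along the limit solution $(Y,Z)$, which is exactly what is controlled by the hypothesis.

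The estimate produced for each $n$ then reads
\begin{align*}
&\E\left[\sup_{s\in[0,\tau]}\left(e^{2\int_0^s a_r{\rm d}r}|Y_s^n-Y_s|^2\right)\right] + \E\left[\int_0^\tau e^{2\int_0^s a_r{\rm d}r}|Z_s^n-Z_s|^2 {\rm d}s\right] \\
&\quad \leq C\left(\E\left[e^{2\int_0^\tau a_r{\rm d}r}|\xi^n-\xi|^2\right] + \E\left[\left(\int_0^\tau e^{\int_0^s a_r{\rm d}r}|g^n(s,Y_s,Z_s)-g(s,Y_s,Z_s)|{\rm d}s\right)^2\right]\right),
\end{align*}
with a constant $C>0$ that does not depend on $n$, since the ranges of $\beta$ and $\rho$ and the processes $\mu_\cdot, \nu_\cdot$ are fixed throughout. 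Passing to the limit $n \to \infty$ on the right-hand side using the standing convergence hypothesis gives the desired conclusion.

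There is really no substantial obstacle, because the $n$-independence of the constant $C$ was already baked into the proof of Theorem 3.2 via \cref{pro:1.1}: the constant there depends only on $\overline{\rho} \in (1,\rho]$, not on the specific terminal condition or generator. The only thing worth double-checking is the direction of the substitution (so that the residue is evaluated along $(Y,Z)$, not along $(Y^n,Z^n)$), which is why I emphasized it above; if one swapped the roles, the RHS would contain $|g^n(s,Y_s^n,Z_s^n) - g(s,Y_s^n,Z_s^n)|$, which is not assumed to vanish.
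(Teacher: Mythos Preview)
Your proposal is correct and follows essentially the same route as the paper. The only cosmetic difference is that the paper does not invoke Theorem~3.2 as a black box but instead reruns its argument inline (writing the difference BSDE, checking assumption~\ref{A:A} for $\hat g^n$, and applying \eqref{2.03} of \cref{pro:1.1}); your direct appeal to Theorem~3.2 with the substitution you describe is cleaner and yields exactly the same estimate with the same $n$-independent constant.
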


\begin{proof}
For each $n\geq1$, let
$$\hat{Y}_\cdot^n:=Y_\cdot^n-Y_\cdot, \ \hat{Z}_\cdot^n:=Z_\cdot^n-Z_\cdot, \ \hat{\xi}^n:=\xi^n-\xi.$$
Then
\begin{align*}
\hat{Y}_t^n=\hat{\xi}^n+\int_t^\tau \hat{g}^n(s,\hat{Y}_s^n,\hat{Z}_s^n){\rm d}s-\int_t^\tau\hat{Z}_s^n{\rm d}B_s, \ t\in[0,\tau],
\end{align*}
where for each $(y,z)\in\R^k\times\R^{k\times{d}}$,
$$\hat{g}^n(t,y,z):=g^n(t,y+Y_t,z+Z_t)-g(t,Y_t,Z_t), \ t\in[0,\tau].$$
By virtue of \ref{A:H4} and \ref{A:H5} we can check that for each $n\geq1$, the generator $\hat g^n$ satisfies assumption \ref{A:A} with $\bar\mu_t=\mu_t$, $\bar\nu_t=\nu_t$ and $f_t=|g^n(t,Y_t,Z_t)-g(t,Y_t,Z_t)|.$ Therefore, it follows from \eqref{2.03} in \cref{pro:1.1} with $r=t=0$ that there exists a uniform constant $C>0$ such that
\begin{align*}
\begin{split}
&\E\left[\sup_{s\in[0,\tau]}\left(e^{2 \int_{0}^{s}a_r{\rm d}r}|Y_s^n-Y_s|^2\right)+\int_0^{\tau} e^{2\int_0^s a_r{\rm d}r}|Z_s^n-Z_s|^2{\rm d}s\right]\\
&\ \ \leq C\E\left[e^{2 \int_{0}^{\tau}a_r{\rm d}r}|\xi^n-\xi|^2+\left(\int_0^\tau e^{\int_0^sa_r{\rm d}r}\left|g^n(s,Y_s,Z_s)-g(s,Y_s,Z_s)\right|{\rm d}s\right)^2\right].
\end{split}
\end{align*}
Thus, the desired assertion follows by sending $n$ to infinity in the last inequality.
\end{proof}

\subsection{Application to PDEs\vspace{0.2cm}}

In this subsection, we present applications of our findings regarding BSDEs to the realm of parabolic PDEs and elliptic PDEs. More precisely, we obtain two nonlinear Feynman-Kac formulas under some general assumptions, see \cref{thm:3.4.1} and \cref{thm:3.4.2}. For convenience, we always assume that $k=1$ in this subsection. Let $K,p>0$ and $q\in[1,2)$ be three constants, and $l$ be a positive integer.

\subsubsection{Probabilistic interpretation for parabolic PDEs\vspace{0.2cm}}

Assume that the terminal time $\tau\equiv T$ for a finite constant $T>0$. Let both $b(t,x):[0,T]\times\R^l\rightarrow\R^l$ and $\sigma(t,x):[0,T]\times \R^l\rightarrow\R^{l\times d}$ be jointly continuous and globally Lipschitz continuous in $x$, uniformly with respect to $t$, and satisfy that for each $(t,x)\in \T\times\R^l$,
\begin{align}\label{conditionb}
|b(t,x)|\leq K(1+|x|)\ \ {\rm and}\ \ |\sigma(t,x)|\leq K.
\end{align}
Furthermore, let both $h(x):\R^l \rightarrow \R$ and $g(t,x,y,z):[0,T]\times \R^l\times\R\times \R^{1\times d} \rightarrow \R$ be (jointly) continuous such that for each $(t,x,y,y_1,y_2,z,z_1,z_2)\in [0,T]\times \R^l\times\R^3\times \R^{3(1\times d)}$,
\begin{align}\label{condition-1}
\begin{array}{c}
|h(x)|\leq K\exp(p|x|^q),\\
|g(t,x,y,0)|\leq K\exp(p|x|^q)\varphi(|y|),\\
(y_1-y_2)(g(t,x,y_1,z)-g(t,x,y_2,z))\leq \bar b(t,x)|y_1-y_2|^2,\\
|g(t,x,y,z_1)-g(t,x,y,z_2)|\leq \bar\sigma (t,x)|z_1-z_2|,
\end{array}
\end{align}
where $\varphi(\cdot):\R_+\rightarrow \R_+$ is a nondecreasing convex function, and $\bar b(t,x), \bar\sigma(t,x):[0,T]\times\R^l\rightarrow\R_+$ are two jointly continuous functions satisfying the following growth condition:
$$\bar b(t,x)+\bar\sigma^2(t,x)\leq K(1+|x|^q).$$
For example, it is easy to verify that the following function
$$g(t,x,y,z):=|x|^q|y|-\exp(2|x|^q)e^y+|x|\sin |z|,\ \ (t,x,y,z)\in \T\times\R^l\times\R\times\R^{1\times d}$$
satisfies the above-mentioned condition on the function $g$.

Consider the following parabolic PDE:
\begin{align}\label{PDE}
\left\{\begin{aligned}
&\frac{\partial u(t,x)}{\partial t}+\mathcal{L}u(t,x)+g(t,x,u(t,x),(\nabla_x u\sigma)(t,x))=0,\ \ (t,x)\in \T\times\R^l,\\
&u(T,x)=h(x),\ x\in\R^l,
\end{aligned}\right.
\end{align}
where
$$
\mathcal{L}:=\frac{1}{2}\sum\limits_{i,j}(\sigma\sigma^*)_{i,j}(t,x)
\frac{\partial^2}{\partial x_i\partial x_j}+\sum\limits_{i}b_i(t,x)\frac{\partial}{\partial x_i},\ \ \nabla_x:=(\frac{\partial}{\partial x_1},\cdots,\frac{\partial}{\partial x_l}),\vspace{0.2cm}
$$
and recall the definition of a continuous viscosity solution to PDE \eqref{PDE} in our framework, see for example \cite{CrandallIshiiLions1992BAMS}.

\begin{dfn}
A continuous function $u(t,x):\T\times \R^l\rightarrow \R$ with $u(T,\cdot)=h(\cdot)$ is a viscosity sub-solution (super-solution) of PDE \eqref{PDE} if for any function $\phi\in C^{1,2}(\T\times \R^l;\R)$, it holds that
$$\frac{\partial \phi(t_0,x_0)}{\partial t}+\mathcal{L}\phi(t_0,x_0)+g(t_0,x_0,u(t_0,x_0),(\nabla_x\phi\sigma)(t_0,x_0))\geq 0,\ \ (\leq 0), $$
provided that $u-\phi$ attains a local maximum (minimum) at point $(t_0,x_0)\in [0,T)\times \R^l$. A viscosity solution of PDE \eqref{PDE} is both a viscosity sub-solution and viscosity super-solution.
\end{dfn}

In the sequel, we aim to give a viscosity solution of the previous PDE \eqref{PDE} via the adapted solution of a BSDE coupled with a SDE. First of all, given $(t,x)\in \T\times \R^l$. From the classical theory of SDEs, it is known that under the above assumptions on the functions $b$ and $\sigma$, the following SDE
\begin{align}\label{SDE-1}
X_s^{t,x}=x+\int_t^s b(r,X_r^{t,x}){\rm d}r+\int_t^s \sigma(r,X_r^{t,x}){\rm d}B_r,\ \ s\in [t,T],
\end{align}
admits a unique solution $(X_s^{t,x})_{s\in [t,T]}$, and for each $\gamma>0$, there exists a constant $C>0$ depending only on $(\gamma,q,T)$ such that
\begin{align}\label{exp}
\E\left[\exp\left(\gamma\sup\limits_{t\leq s\leq T}|X_s^{t,x}|^q\right)\right]\leq C\exp\left(C|x|^q\right).\vspace{0.1cm}
\end{align}
Furthermore, by the above assumptions on functions $h$ and $g$ along with the last inequality, we can verify that $h(X_T^{t,x})\in L_T^2(a_\cdot;\R)$ and the generator $g(r,X_r^{t,x},y,z){\bf 1}_{t\leq r\leq T}$ satisfies assumptions \ref{A:H1}, \ref{A:H2}, \ref{A:H3b}, \ref{A:H4} and \ref{A:H5} with $\beta=1$, $\rho=2$,
$$\mu_r:=\bar b(r,X_r^{t,x}){\bf 1}_{t\leq r\leq T},\ \ \nu_r:=\bar\sigma(r,X_r^{t,x}){\bf 1}_{t\leq r\leq T}\ \ {\rm and} \ \  {\tilde \mu}_r:=K(1+|X_r^{t,x}|){\bf 1}_{t\leq r\leq T}.$$
Consequently, it follows from \cref{cor:3.4} that the following BSDE
\begin{align}\label{BSDE-1}
Y_s^{t,x}=h(X_T^{t,x})+\int_s^T g(r,X_r^{t,x},Y_r^{t,x},Z_r^{t,x}){\rm d}r-\int_s^T Z_r^{t,x}{\rm d}B_r,\ \ s\in[t,T],
\end{align}
admits a unique weighted $L^2$-solution $(Y_s^{t,x},Z_s^{t,x})_{s\in [t,T]}$. In addition, by the Markov property of SDE \eqref{SDE-1} and BSDE \eqref{BSDE-1} we can conclude that $Y_t^{t,x}$ is a deterministic real number.


\begin{thm}\label{thm:3.4.1}
Under the above assumptions, $u(t,x):=Y_t^{t,x},\ (t,x)\in \T\times\R^l$ is a continuous function with respect to $(t,x)$, and it is a viscosity solution of PDE \eqref{PDE}. Moreover, there exists a constant $C>0$ depending only on $(K,p,q,T)$ such that
\begin{align}\label{eq:3.35}
|u(t,x)|\leq C\exp(C|x|^q).
\end{align}
\end{thm}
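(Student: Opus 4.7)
The plan is to follow the classical nonlinear Feynman-Kac approach of Pardoux-Peng, adapted to the weighted $L^2$-framework developed in this paper. Fix $(t,x)\in \T\times\R^l$. First I will establish the growth estimate \eqref{eq:3.35} by applying \cref{pro:1.1} to BSDE \eqref{BSDE-1} with $\beta=1$, $\rho=2$, $\bar\mu_r=\bar b(r,X_r^{t,x}){\bf 1}_{t\leq r\leq T}$, $\bar\nu_r=\bar\sigma(r,X_r^{t,x}){\bf 1}_{t\leq r\leq T}$ and $f_r:=K\exp(p|X_r^{t,x}|^q)\varphi(0){\bf 1}_{t\leq r\leq T}$. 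The bound $|g(r,X_r^{t,x},0,0)|\leq K\exp(p|X_r^{t,x}|^q)\varphi(0)$ together with the stochastic monotonicity and Lipschitz conditions from \eqref{condition-1} shows that assumption \ref{A:A} holds for this data. Since $\bar b(r,X_r^{t,x})+\bar\sigma^2(r,X_r^{t,x})\leq K(1+|X_r^{t,x}|^q)$, the weighted factor is dominated by $\exp\bigl(CT(1+\sup_{t\leq r\leq T}|X_r^{t,x}|^q)\bigr)$, so combining \cref{pro:1.1} with $|h(X_T^{t,x})|\leq K\exp(p|X_T^{t,x}|^q)$, H\"older's inequality and the exponential moment estimate \eqref{exp} yields $|u(t,x)|^2=|Y_t^{t,x}|^2\leq C\exp(C|x|^q)$ for a constant $C$ depending only on $(K,p,q,T)$.

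Second, I will prove continuity of $u$. By uniqueness of the weighted $L^2$-solution and the Markov property of \eqref{SDE-1}, one has the flow identity $Y_s^{t,x}=u(s,X_s^{t,x})$ for $s\in[t,T]$, $\mathbb{P}$-a.s. For a sequence $(t_n,x_n)\to(t,x)$, classical SDE estimates give uniform exponential moment bounds of the type \eqref{exp} as well as $\E[\sup_{s\in\T}|X_s^{t_n,x_n}-X_s^{t,x}|^2]\to 0$, after extending both processes naturally to $\T$. Applying \cref{thm:3.2} to the pair of BSDEs driven by $g(\cdot,X_\cdot^{t_n,x_n},\cdot,\cdot)$ and $g(\cdot,X_\cdot^{t,x},\cdot,\cdot)$, with the passage under the dominated convergence theorem justified by the exponential moment bounds and the joint continuity of $h$ and $g$, yields $Y_t^{t_n,x_n}\to Y_t^{t,x}$. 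An additional control of $|Y_{t_n}^{t_n,x_n}-Y_t^{t_n,x_n}|$ via the BSDE dynamics on the shrinking interval between $t_n$ and $t$ then produces $u(t_n,x_n)\to u(t,x)$.

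Third, I will verify the viscosity solution property through the standard test-function comparison. Given $\phi\in C^{1,2}(\T\times\R^l)$ such that $u-\phi$ attains a local maximum at $(t_0,x_0)\in[0,T)\times\R^l$, choose small $\delta,h>0$ and set $\tau_h:=\inf\{s\geq t_0:|X_s^{t_0,x_0}-x_0|>\delta\}\wedge(t_0+h)$, so that $u(s,X_s^{t_0,x_0})-\phi(s,X_s^{t_0,x_0})\leq u(t_0,x_0)-\phi(t_0,x_0)$ for $s\in[t_0,\tau_h]$. It\^o's formula applied to $\phi(s,X_s^{t_0,x_0})$ on $[t_0,\tau_h]$ exhibits $(\phi(s,X_s^{t_0,x_0}),(\nabla_x\phi\,\sigma)(s,X_s^{t_0,x_0}))$ as the solution of a BSDE with terminal $\phi(\tau_h,X_{\tau_h}^{t_0,x_0})$ and generator $-(\partial_t\phi+\mathcal{L}\phi)(s,X_s^{t_0,x_0})$. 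Comparing, by means of a one-dimensional comparison theorem valid in $H^2_\tau(a_\cdot;\R\times\R^{1\times d})$ that follows directly from \cref{pro:1.1} applied to the difference together with the monotonicity and Lipschitz structure of $g$, with the solution of the BSDE on $[t_0,\tau_h]$ having terminal $u(\tau_h,X_{\tau_h}^{t_0,x_0})=Y_{\tau_h}^{t_0,x_0}$ and generator $g(\cdot,X_\cdot^{t_0,x_0},\cdot,\cdot)$, then dividing by $h$ and letting $h\to 0^+$ with path continuity and the continuity of $\mathcal{L}\phi$ and $g$, I obtain the supersolution inequality at $(t_0,x_0)$. The subsolution case is symmetric.

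I expect the main obstacle to lie in the limit passage $h\to 0^+$ under the $\exp(p|x|^q)$ growth in $x$ and the unbounded stochastic coefficients $\bar b,\bar\sigma$. The uniform integrability needed for the passage, and the one-dimensional comparison in the weighted space $H^2_\tau(a_\cdot;\R\times\R^{1\times d})$, both hinge on the a priori estimate \cref{pro:1.1} combined with the exponential moment inequality \eqref{exp}; I expect the bookkeeping there to be the most delicate part of the argument.
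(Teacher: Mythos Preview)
Your outline for the growth bound \eqref{eq:3.35} and for the continuity of $u$ matches the paper's argument: both appeal to \cref{pro:1.1} together with the exponential moment estimate \eqref{exp}, and both invoke the continuous dependence \cref{thm:3.2} to pass from continuity of $X^{t,x}_\cdot$ to continuity of $Y^{t,x}_t$.

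The viscosity step, however, is handled differently. The paper argues by contradiction: assuming the subsolution inequality fails at a local maximum $(t_0,x_0)$ of $u-\phi$, it fixes $\gamma,\delta>0$ so that on a tube $\partial_t\phi+\mathcal{L}\phi+g(\cdot,\cdot,u,\nabla_x\phi\,\sigma)\leq -\gamma$ and $u\leq\phi$, writes the two BSDEs on $[t_0,\vartheta]$ for $(Y^{t_0,x_0},Z^{t_0,x_0})$ and $(\phi(\cdot,X_\cdot),\nabla_x\phi\,\sigma)$, and then absorbs the $z$-Lipschitz drift by an explicit Girsanov change of measure to conclude $\phi(t_0,x_0)>u(t_0,x_0)$, a contradiction. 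Your route via a comparison theorem plus the limit $h\to 0^+$ is the classical alternative and is valid, but two points deserve correction. First, the comparison theorem you need does \emph{not} follow directly from \cref{pro:1.1}: that proposition controls $|Y|^2$ and gives no sign information on a difference; you must instead linearize in $z$ and apply Girsanov (or use Tanaka on $(Y^1-Y^2)^+$), which is precisely the step the paper carries out inline. Second, your terminology is inverted: at a local maximum of $u-\phi$ one verifies the \emph{sub}solution inequality, not the supersolution one. The paper's contradiction argument is more economical here because it avoids both the separate comparison statement and the $h\to 0^+$ limit passage under exponential growth; your approach, once the comparison is properly justified, has the merit of being constructive at every scale $h$.
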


\begin{proof}
From \cref{thm:3.2} and the continuity of $X_\cdot^{t,x}$ with respect to $(t,x)$, it follows that $u(t,x):=Y_t^{t,x}$ is a continuous function with respect to $(t,x)$. The desired assertion \eqref{eq:3.35} can be easily derived from \eqref{exp} and \cref{pro:1.1} together with the assumptions on functions $h$ and $g$. We omit its proof here.

In the sequel, we prove that $u(t,x)$ is a viscosity sub-solution. By an identical way, it can be shown that $u(t,x)$ is also a viscosity super-solution. Now, take any function $\phi\in C^{1,2}(\T\times \R^l;\R)$ such that $u-\phi$ attains a local maximum at $(t_0,x_0)\in [0,T)\times\R^l$. We assume without loss of generality that
$$\phi(t_0,x_0)=u(t_0,x_0).$$
Therefore, in order to show that $u(t,x)$ is a viscosity sub-solution, it only need to prove that
$$\frac{\partial \phi(t_0,x_0)}{\partial t}+\mathcal{L}\phi(t_0,x_0)+g(t_0,x_0,u(t_0,x_0),(\nabla_x\phi\sigma)(t_0,x_0))\geq0.$$
If not, it is known from continuity that there exists $\gamma>0$ and $0<\delta\leq T-t_0$, when $t_0\leq t\leq t_0+\delta$ and $|y-x_0|\leq\delta$, such that
\begin{align}\label{333,33}
\left\{\begin{aligned}
&u(t,y)\leq \phi(t,y);\\
&\frac{\partial \phi(t,y)}{\partial t}+\mathcal{L}\phi(t,y)+g(t,y,u(t,y),(\nabla_x\phi\sigma)(t,y))\leq -\gamma.
\end{aligned}\right.
\end{align}
Define the following stopping time:
$$\vartheta:=\inf\{u\geq t_0:|X_u^{t_0,x_0}-x_0|\geq \delta\}\wedge(t_0+\delta)>t_0.$$
By the Markov property and BSDE \eqref{BSDE-1}, we know that $(\bar {Y}_t,\bar {Z}_t)
:=(Y_{t}^{t_0,x_0},Z_t^{t_0,x_0})$ is an adapted solution of the following BSDE:
\begin{align}\label{BSDE1}
\bar{Y}_t=u(\vartheta,X_{\vartheta}^{t_0,x_0})+\int_t^{\vartheta} g(s,X_s^{t_0,x_0},u(s,X_s^{t_0,x_0}),\bar {Z}_s){\rm d}s-\int_t^{\vartheta} \bar{Z}_s{\rm d}B_s, \ t\in[t_0,\vartheta].
\end{align}
On the other hand, it follows from It\^{o}'s formula that $(\widetilde{Y}_t,\widetilde{Z}_t):=(\phi(t,X_t^{t_0,x_0}),
(\nabla_x\phi\sigma)(t,X_t^{t_0,x_0}))$ is an adapted solution for the following BSDE:
\begin{align}\label{BSDE2}
\widetilde{Y}_t=\phi(\vartheta,X_{\vartheta}^{t_0,x_0})-\int_t^{\vartheta} \left\{\frac{\partial\phi}{\partial t}+\mathcal{L}\phi\right\}(s,X_s^{t_0,x_0})){\rm d}s-\int_t^{\vartheta} \widetilde{Z}_s{\rm d}B_s, \ t\in[t_0,\vartheta].
\end{align}
Combining \eqref{333,33}, \eqref{BSDE1} and \eqref{BSDE2} along with the Lipschitz continuity for $g$ in $z$ and the definition of the stopping time $\vartheta$, we obtain
\begin{align}\label{eq:3.39}
\begin{split}
\widetilde{Y}_{t_0}-\bar {Y}_{t_0}=&\ \phi(\vartheta,X_{\vartheta}^{t_0,x_0})
-u(\vartheta,X_{\vartheta}^{t_0,x_0})-\int_{t_0}^{\vartheta} (\widetilde{Z}_s-\bar {Z}_s){\rm d}B_s\\
&\ -\int_{t_0}^{\vartheta}\left(\left\{\frac{\partial\phi}{\partial t}+\mathcal{L}\phi\right\}(s,X_s^{t_0,x_0})+g(s,X_s^{t_0,x_0},
u(s,X_s^{t_0,x_0}),\widetilde{Z}_s)\right){\rm d}s\\
&\ +\int_{t_0}^{\vartheta} \left( g(s,X_s^{t_0,x_0},u(s,X_s^{t_0,x_0}),\widetilde{Z}_s)
-g(s,X_s^{t_0,x_0},u(s,X_s^{t_0,x_0}),\bar {Z}_s)\right){\rm d}s\\
\geq &\ \gamma(\vartheta-t_0) -\int_{t_0}^{\vartheta} \bar\sigma(s,X_s^{t_0,x_0})|\widetilde{Z}_s-\bar {Z}_s|{\rm d}s-\int_{t_0}^{\vartheta}(\widetilde{Z}_s-\bar {Z}_s){\rm d}B_s.
\end{split}
\end{align}
Denote a new probability measure $\mathbb{Q}$ equivalent to $\mathbb{P}$ by
$$
\frac{{\rm d}\mathbb{Q}}{{\rm d}\mathbb{P}}:=\exp\left(-\int_{t_0}^\vartheta \bar\sigma(r,X_r^{t_0,x_0})
\frac{\widetilde{Z}_r-\bar {Z}_r}
{|\widetilde{Z}_r-\bar {Z}_r|}{\bf 1}_{|\widetilde{Z}_r-\bar {Z}_r|\neq 0} {\rm d}B_r-\frac{1}{2}\int_{t_0}^\vartheta \bar\sigma^2(r,X_r^{t_0,x_0}){\bf 1}_{|\widetilde{Z}_r-\bar {Z}_r|\neq 0}{\rm d}r\right).
$$
According to Girsanov's theorem, we know that the process
$$
B^{\mathbb{Q}}_s:=B_s+\int_{t_0}^{s\wedge\vartheta\vee t_0} \bar\sigma(r,X_r^{t_0,x_0})\frac{(\widetilde{Z}_r-\bar Z_r)^*}
{|\widetilde{Z}_r-\bar Z_r|}{\bf 1}_{|\widetilde{Z}_r-\bar Z_r|\neq 0} {\rm d}r,\ \ s\in [0,T]
$$
is a standard Brownian motion under the probability measure $\mathbb{Q}$, and we have, in view of \eqref{eq:3.39},
$$
\widetilde{Y}_{t_0}-\bar Y_{t_0}\geq \gamma(\vartheta-t_0) -\int_{t_0}^{\vartheta}(\widetilde{Z}_s-\bar Z_s){\rm d}B^{\mathbb{Q}}_s.
$$
By taking the conditional mathematical expectation under $\mathbb{Q}$ with respect to $\F_{t_0}$ in the last inequality, we have $\widetilde{Y}_{t_0}>\bar Y_{t_0}$, i.e., $\phi(t_0,x_0)>u(t_0,x_0)$ contradicting our standing assumption.
\end{proof}

\subsubsection{Probabilistic interpretation for elliptic PDEs\vspace{0.2cm}}

Let both $b(x):\R^l\rightarrow\R^l$ and $\sigma(x): \R^l\rightarrow\R^{l\times d}$ be globally Lipschitz continuous functions. For $x\in \R^l$, let $(X_t^{x})_{t\geq 0}$ denote the solution of the following SDE:
\begin{align}\label{SDE-2}
X_t^{x}=x+\int_0^t b(X_s^{x}){\rm d}s+\int_0^t \sigma(X_s^{x}){\rm d}B_s, \ \ t\geq 0.
\end{align}
Let $D$ be an open bounded subset of $\R^l$, whose boundary $\partial D$ is of class $C^1$. For each $x\in \bar {D}:=D\cup\partial D$, define the following stopping time:
$$\tau_x\equiv\inf\{t\geq0: X_t^{x}\notin \bar {D}\}.$$
We assume that $\mathbb{P}(\tau_x<\infty)=1$ for all $x\in\bar {D}$, and that the set
\begin{align}\label{closed}
\Gamma\equiv\{x\in\partial D:\mathbb{P}(\tau_x>0)=0\}\ \ {\rm is \ closed}.
\end{align}
Let both $h(x):\R^l\rightarrow\R$ and $g(x,y,z): \R^l\times\R\times \R^{1\times d}\rightarrow\R$ be (jointly) continuous such that for each $(x,y,y_1,y_2,z,z_1,z_2)\in \R^l\times \R^3\times\R^{3(1+d)}$,
\begin{align}\label{condition-2}
\begin{array}{c}
|g(x,y,0)|\leq \varphi(|x|)\varphi(|y|),\\
(y_1-y_2)g(x,y_1,z)-g(x,y_2,z))\leq \bar b(x)|y_1-y_2|^2,\\
|g(x,y,z_1)-g(x,y,z_2)|\leq \bar\sigma(x)|z_1-z_2|,
\end{array}
\end{align}
where $\varphi(\cdot):\R_+\rightarrow \R_+$ is a nondecreasing convex function, and $\bar b(x),\bar\sigma(x):\R^l\rightarrow \R_+$ are two continuous functions. We further assume that for each $x\in \bar {D}$,
$$
\E\left[e^{2\int_0^{\tau_x} a^x_s {\rm d}s}h^2(X^x_{\tau_x})+
\left(\int_0^{\tau_x}e^{\int_0^t a^x_s {\rm d}s}|g(X^x_t,0,0)| {\rm d}t\right)^2\right]<+\infty
$$
with
$$a^x_s:=\beta \bar b(X^x_s)+\frac{\rho}{2} \bar\sigma^2(X^x_s).$$
Then, according to \cref{thm:3.1} and \cref{cor:3.4}, we know that
the following BSDE
\begin{align}\label{BSDE-2}
Y_t^{x}=h(X_{\tau_x}^{x})+\int_t^{\tau_x} g(X_s^{x},Y_s^{x},Z_s^{x}){\rm d}s-\int_t^{\tau_x} Z_s^{x}{\rm d}B_s,\ \ t\in [0, \tau_x]
\end{align}
admits a unique weighted $L^2$-solution $(Y_t^{x},Z_t^{x})_{t\in [0,\tau_x]}$.

In the sequel, we consider the following elliptic PDE with Dirichlet boundary condition:
\begin{align}\label{PDE*}
\left\{\begin{aligned}
&\mathcal{\bar L}u(x)+g(x,u(x),(\nabla u\sigma)(x))=0, \ \ x\in D;\\
&u(x)=h(x),\ \ x\in\partial D,
\end{aligned}\right.
\end{align}
where
$$
\mathcal{\bar L}:=\frac{1}{2}\sum\limits_{i,j}(\sigma\sigma^*)_{i,j}(x)
\frac{\partial^2}{\partial x_i\partial x_j}+\sum\limits_{i}b_i(x)\frac{\partial}{\partial x_i},\ \ \nabla:=(\frac{\partial}{\partial x_1},\cdots,\frac{\partial}{\partial x_l}),\vspace{0.2cm}
$$
and recall the definition of a continuous viscosity solution to \eqref{PDE*} in our framework, see for example \cite{PardouxandRascanu(2014)}.

\begin{dfn}
A continuous function $u:\bar {D}\rightarrow\R$ is called a viscosity sub-solution (super-solution) of PDE \eqref{PDE*} if for any function $\phi\in C^2(\bar {D};\R)$, it holds that
$$
\begin{array}{c}
\mathcal{\bar L}\phi(x_0)+g(x_0,u(x_0),(\nabla\phi\sigma)(x_0))\geq 0\ (\leq 0), \ x_0\in D;\\
\max\{\mathcal{\bar L}\phi(x_0)+g(x_0,u(x_0),(\nabla\phi\sigma)(x_0)),
h(x_0)-u(x_0)\}\geq 0, \ x_0\in \partial D\\
(\min\{\mathcal{\bar L}\phi(x_0)+g(x_0,u(x_0),(\nabla\phi\sigma)(x_0)),
h(x_0)-u(x_0)\}\leq 0, \ x_0\in \partial D),
\end{array}
$$
provided that $u-\phi$ attains a local maximum (minimum) at point $x_0\in \R^l$. A continuous function $u$ is said to be a viscosity solution of PDE \eqref{PDE*} if it is both a viscosity sub-solution and a viscosity super-solution.
\end{dfn}

Now we shall give the probabilistic interpretation for elliptic PDE \eqref{PDE*}.

\begin{thm}\label{thm:3.4.2}
Under the above assumptions, $u(x):=Y_0^{x},\ x\in \R^l$ is a continuous function on $\bar {D}$ and it is a viscosity solution of PDE \eqref{PDE*}.
\end{thm}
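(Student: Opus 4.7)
The proof splits into two main tasks: continuity of $u$ on $\bar D$, and verification of the viscosity sub-/super-solution inequalities, the latter further split into the interior $D$ and the boundary $\partial D$. Throughout I will mirror the pattern of the proof of \cref{thm:3.4.1}, with the random exit time $\tau_x$ playing the role of the fixed horizon $T$ and with $\mathcal{\bar L}$ replacing $\partial_t+\mathcal{L}$. For continuity, fix $x_0\in\bar D$ and $x_n\to x_0$; standard stability for the SDE \eqref{SDE-2} gives $X^{x_n}\to X^{x_0}$ uniformly on compact time intervals in probability, and the hypotheses $\partial D\in C^1$ together with $\Gamma$ closed (see \eqref{closed}) are the classical ingredients that yield $\tau_{x_n}\to\tau_{x_0}$ in probability. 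Combining these two limits with continuity of $h$ and $g$ and the continuous dependence result \cref{thm:3.2}, applied to the BSDE \eqref{BSDE-2} with random horizon $\tau_x$, then yields $u(x_n)=Y^{x_n}_0\to Y^{x_0}_0=u(x_0)$. In particular, for $x_0\in\Gamma$ one has $\tau_{x_0}=0$ a.s., so $u(x_0)=h(x_0)$.

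\textbf{Interior viscosity property.} Fix $x_0\in D$ and $\phi\in C^2(\bar D;\R)$ such that $u-\phi$ attains a local maximum at $x_0$, normalized so that $\phi(x_0)=u(x_0)$. Suppose, for contradiction, that $\mathcal{\bar L}\phi(x_0)+g(x_0,u(x_0),(\nabla\phi\sigma)(x_0))<-\gamma$ for some $\gamma>0$; by continuity this propagates to a small closed ball $B\subset D$ around $x_0$ on which also $u\leq\phi$. Let $\vartheta:=\inf\{t\geq 0:X_t^{x_0}\notin B\}\wedge\delta$ for a small $\delta>0$, so that $0<\vartheta\leq\tau_{x_0}$ a.s. The Markov property applied to \eqref{SDE-2}--\eqref{BSDE-2} shows that $(Y_t^{x_0},Z_t^{x_0})$ on $[0,\vartheta]$ solves the BSDE with terminal value $u(X_\vartheta^{x_0})$ and driver $g(X_t^{x_0},\cdot,\cdot)$, while It\^{o}'s formula applied to $\phi(X_t^{x_0})$ produces a second BSDE with terminal $\phi(X_\vartheta^{x_0})$ and driver $-(\mathcal{\bar L}\phi)(X_t^{x_0})$. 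Subtracting the two equations, invoking the strict inequality above together with the $\bar\sigma(\cdot)$-Lipschitz dependence of $g$ in $z$, and eliminating the $z$-difference through a Girsanov change of measure exactly as in the proof of \cref{thm:3.4.1}, one arrives at $\phi(x_0)>u(x_0)$, a contradiction. The super-solution property follows by a symmetric argument.

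\textbf{Boundary condition and main obstacle.} For $x_0\in\partial D$ two cases arise. If $x_0\in\Gamma$ then $\tau_{x_0}=0$ a.s., so $u(x_0)=h(x_0)$ and both the max- and the min-conditions in the boundary definition are met trivially. If $x_0\in\partial D\setminus\Gamma$ then $\mathbb{P}(\tau_{x_0}>0)>0$, and a violation of the boundary condition can be propagated to a strict inequality on a neighborhood of $x_0$ in $\bar D$; running the same It\^{o}/Girsanov comparison on the event $\{\tau_{x_0}>0\}$ with a small positive stopping time inside that neighborhood again delivers a contradiction. The genuinely delicate step I expect to wrestle with is the first one, namely establishing $\tau_{x_n}\to\tau_{x_0}$ in probability when $x_0$ lies on the irregular part $\partial D\setminus\Gamma$. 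This is precisely where the $C^1$-regularity of $\partial D$ and the closedness hypothesis \eqref{closed} enter; once it is in hand, the rest of the proof reduces to the already-established continuous dependence \cref{thm:3.2} plus the contradiction-via-Girsanov scheme from \cref{thm:3.4.1}.
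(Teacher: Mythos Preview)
Your proposal is correct and mirrors the paper's proof closely. Two small simplifications the paper makes: the continuity of $x\mapsto\tau_x$ on $\bar D$ that you flag as the main obstacle is simply quoted from Proposition~5.76 of \cite{PardouxandRascanu(2014)} (almost-sure continuity under the standing hypothesis that $\Gamma$ is closed), and for $x_0\in\partial D\setminus\Gamma$ Blumenthal's $0$--$1$ law upgrades $\mathbb{P}(\tau_{x_0}>0)>0$ to $\tau_{x_0}>0$ a.s., so the paper runs the interior contradiction argument there verbatim---with the localizing stopping time $\vartheta:=\inf\{t\geq 0:|X_t^{x_0}-x_0|+\int_0^t\bar\sigma^2(X_s^{x_0})\,{\rm d}s\geq\delta\}\wedge\tau_{x_0}\wedge\delta$---rather than conditioning on $\{\tau_{x_0}>0\}$.
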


\begin{proof}
First of all, it follows from Proposition 5.76 in \cite{PardouxandRascanu(2014)} that under the condition \eqref{closed}, the mapping $x\rightarrow \tau_x$ is almost surely continuous on $\bar {D}$. Then, from \cref{thm:3.2} and the continuity of $X_\cdot^{x}$ with respect to $x$, we can conclude that $u(x):=Y_0^x$ is a continuous function on $\bar {D}$.

We prove only that $u(x)$ is a viscosity sub-solution of  PDE \eqref{PDE*}. The proof of the other statement follows by a similar argument. Take any function $\phi\in C^2(\bar {D};\R)$ such that $u-\phi$ attains a local maximum at $x_0\in \bar {D}$. If $x_0\in \Gamma$, then $\tau_{x_0}=0$, and hence $u(x_0)=h(x_0)$. If $x_0\in D\cup (\partial D\cap \Gamma^c)$, then $\tau_{x_0}>0$. For the latter case, we assume without loss of generality that
$$
u(x_0)=\phi(x_0).\vspace{-0.1cm}
$$
We now suppose that
$$\mathcal{\bar L}\phi(x_0)+g(x_0,u(x_0),(\nabla\phi\sigma)(x_0))<0,$$
and we will find a contradiction. In fact, by continuity let $\delta,\gamma>0$ such that whenever $|y-x_0|\leq \delta,$
\begin{align}\label{eq:3.43}
\left\{\begin{aligned}
&u(y)\leq \phi(y);\\
&\mathcal{\bar L}\phi(y)+g(y,u(y),(\nabla\phi\sigma)(y))\leq -\gamma.
\end{aligned}\right.
\end{align}
Define the following stopping time:
$$\vartheta:=\inf\{t\geq 0:|X_t^{x_0}-x_0|+\int_0^t \bar\sigma^2(X_s^{x_0}){\rm d}s\geq \delta\}\wedge \tau_{x_0}\wedge \delta>0.$$
By the Markov property and BSDE \eqref{BSDE-2}, we know that $(\bar {Y}_t,\bar {Z}_t):=(Y^{x_0}_t, Z^{x_0}_t)$ is an adapted solution of the following BSDE:
\begin{align}\label{BSDE3.38}
\bar Y_t=u(X_{\vartheta}^{x_0})+\int_t^{\vartheta} g(X_s^{x_0},u(X_s^{x_0}),\bar Z_s){\rm d}s-\int_t^{\vartheta} \bar Z_s{\rm d}B_s,\ \ t\in [0,\vartheta].
\end{align}
On the other hand, it follows from It\^{o}'s formula that
$(\widetilde{Y}_t,\widetilde{Z}_t):=(\phi(X_t^{x_0}),
(\nabla\phi\sigma)(X_t^{x_0}))$ is an adapted solution of the following BSDE:
\begin{align}\label{BSDE3.39}
\widetilde{Y}_t=\phi(X_{\vartheta}^{x_0})-
\int_t^{\vartheta} \mathcal{\bar L}\phi(X_{s}^{x_0}){\rm d}s-\int_t^{\vartheta} \widetilde{Z}_s{\rm d}B_s,\ \ t\in [0,\vartheta].
\end{align}
In light of \eqref{eq:3.43}, \eqref{BSDE3.38} and \eqref{BSDE3.39} along with the definition of the stopping time $\vartheta$ and by using the Lipschitz continuity for $g$ in $z$ and Girsanov's theorem, a similar argument as in the proof of \cref{thm:3.4.1} yields that $\widetilde{Y}_0>\bar Y_0$, i.e., $\phi(x_0)>u(x_0) $ that is a contradiction. The proof is complete.
\end{proof}

\begin{rmk}
We would like to emphasize that both generators of \eqref{BSDE-1} and \eqref{BSDE-2} satisfy the stochastic monotonicity condition in the unknown variable $y$ and the stochastic Lipschitz continuity condition in the unknown variable $z$ due to the presence of functions $\bar b$ and $\bar\sigma$ in assumptions \eqref{condition-1} and \eqref{condition-2}. Under the above situation, Feynman-Kac formula for PDEs have been investigated for example in \cite{Bahlali2004} and \cite{PardouxandRascanu(2014)}. However, our assumptions \eqref{condition-1} and \eqref{condition-2} on the function $g$ in PDEs are obviously more general than theirs. In addition, we mention that Theorems \ref{thm:3.4.1} and \ref{thm:3.4.2} can be easily extended to the multidimensional case in the spirit of \cite{PardouxandRascanu(2014)}.
\end{rmk}

\section{The proof of the existence part of \cref{thm:3.1}}
\setcounter{equation}{0}

In this section, we will give the proof of the existence part of \cref{thm:3.1}. We first consider the case that $g$ is independent of $z$, and then the general case. 

\subsection{The case of $g$ independent of $z$}

In this subsection, inspired by \cite{Li2023}, \cite{Xiao2015} and \cite{XiaoandFan2017} and combined with some new ideas, we propose and prove the following \cref{pro:2.1}. This proposition is a special case of \cref{thm:3.1} when the generator $g$ does not depend on $z$.

\begin{pro}\label{pro:2.1}
Assume that $g$ is independent of $z$ and satisfies the following assumptions \ref{A:H1'}-\ref{A:H4'}:
\begin{enumerate}
\renewcommand{\theenumi}{(H1')}
\renewcommand{\labelenumi}{\theenumi}
\item\label{A:H1'} $\Dis \E\left[\left(\int_0^\tau e^{\beta \int_0^s\mu_r{\rm d}r}|g(s,0)|{\rm d}s\right)^2\right]<+\infty;$
\renewcommand{\theenumi}{(H2')}
\renewcommand{\labelenumi}{\theenumi}
\item\label{A:H2'} $\as$, $g(\omega,t,\cdot)$ is continuous;
\renewcommand{\theenumi}{(H3')}
\renewcommand{\labelenumi}{\theenumi}
\item\label{A:H3'} $g$ has a general growth in $y$, i.e., for each $r\in \R_+$, it holds that
$$\E\left[\int_0^\tau e^{\beta \int_0^t\mu_s{\rm d}s}\overline{\psi}_{r}^{\alpha_\cdot}(t)dt\right]<+\infty$$
with
$$\overline{\psi}_{r}^{\alpha_\cdot}(t):=\sup_{|y|\leq r\alpha_t} \left|g(t,y)-g(t,0)\right|, \ t\in[0,\tau],\vspace{0.1cm}$$
where $(\alpha_t)_{t\in[0,\tau]}$ is defined in assumption
    \ref{A:H3};
\renewcommand{\theenumi}{(H4')}
\renewcommand{\labelenumi}{\theenumi}
\item\label{A:H4'} $g$ satisfies a stochastic monotonicity condition in $y$, i.e., for each $y_1, y_2\in\R^k$,
$$\left\langle y_1-y_2,g(\omega,t,y_1)-g(\omega,t,y_2)\right\rangle\leq \mu_t(\omega)|y_1-y_2|^2, \ t\in[0,\tau].$$
\end{enumerate}
Then, for each $\xi\in L_\tau^2(\beta\mu_\cdot;\R^k)$, the following BSDE
\begin{align}\label{BSDE2.1}
  y_t=\xi+\int_t^\tau g(s,y_s){\rm d}s-\int_t^\tau z_s{\rm d}B_s, \ \ t\in[0,\tau]
\end{align}
admits a weighted $L^2$-solution in the space of $H_\tau^2(\beta\mu_\cdot;\R^{k}\times\R^{k\times d})$.
\end{pro}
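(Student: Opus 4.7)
The plan is to obtain the weighted $L^2$-solution of BSDE \eqref{BSDE2.1} as the limit of solutions to approximating BSDEs whose generators are stochastic Lipschitz in $y$, for which existence and uniqueness follows from \cref{cor:3.2} (trivially adapted to the present setting where $g$ does not depend on $z$). First I would construct an approximating sequence $\{g_n\}_{n\geq 1}$ by a combined mollification/projection procedure in the spirit of \cite{Briand2003} and \cite{XiaoandFan2017}: for each $n$, $g_n(t,y)$ is obtained by first radially projecting $y$ onto the closed ball of radius $n\alpha_t$, evaluating $g(t,\cdot)$ at the projected point, and then mollifying the result in $y$ by a smooth $C_c^\infty$ kernel of scale $1/n$. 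The non-increasing, progressively measurable $\alpha_\cdot$ supplied by \ref{A:H3'} keeps $g_n$ progressively measurable; the stochastic monotonicity \ref{A:H4'} transfers to $g_n$ (up to a vanishing error, since mollification is a convex combination and radial projection is $1$-Lipschitz); each $g_n$ is globally Lipschitz in $y$ with a random constant depending on $n$; and by \ref{A:H3'},
\[
|g_n(t,0)|\leq |g(t,0)|+\overline{\psi}_{n+1}^{\alpha_\cdot}(t),
\]
so that \ref{A:H1'} secures the required weighted integrability of $g_n(\cdot,0)$. Hence \cref{cor:3.2} applies to deliver a unique weighted $L^2$-solution $(y^n_\cdot,z^n_\cdot)$ of BSDE$(\xi,\tau,g_n)$.

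The next step is a uniform a priori bound. Even though the Lipschitz constant of $g_n$ depends on $n$, the solution can still be estimated in the $n$-independent space $H_\tau^2(\beta\mu_\cdot;\R^k\times\R^{k\times d})$ via \cref{pro:1.1}: indeed, since $g_n$ remains monotone in $y$ with the original coefficient $\mu_\cdot$, it satisfies assumption \ref{A:A} with $\bar\mu_\cdot=\mu_\cdot$, $\bar\nu_\cdot\equiv 0$ and $f_t=|g_n(t,0)|$, so that the estimate \eqref{2.03} (with $\overline{\rho}=\rho$ and $r=t=0$) yields a bound uniform in $n$ thanks to \ref{A:H1'} and \ref{A:H3'} applied to $r=n+1$, combined with the fact that multiplying by $\alpha_t\in(0,1]$ does not worsen the weighted integrability.

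The principal obstacle is the Cauchy property of $\{(y^n_\cdot,z^n_\cdot)\}$ in $H_\tau^2(\beta\mu_\cdot;\R^k\times\R^{k\times d})$. To handle this, I would apply It\^o's formula to $e^{2\beta\int_0^t\mu_r{\rm d}r}|y^n_t-y^m_t|^2$, use \ref{A:H4'} to absorb the monotone part of the drift exactly as in the proof of \cref{pro:1.1}, and apply the BDG inequality to control the stochastic integral, thereby reducing the problem to estimating
\[
\E\left[\int_0^\tau e^{2\beta\int_0^s\mu_r{\rm d}r}\,|y^n_s-y^m_s|\cdot|g_n(s,y^m_s)-g_m(s,y^m_s)|\,{\rm d}s\right].
\]
Since $y^m_s$ is unbounded in $\omega$, pointwise convergence of $g_n$ to $g$ on $\R^k$ does not directly transfer. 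I would split the domain of integration according to $\{|y^m_s|\leq R\alpha_s\}$ versus $\{|y^m_s|>R\alpha_s\}$ for a large threshold $R$: on the first event, the uniform continuity of $g(t,\cdot)$ on compact sets granted by \ref{A:H2'}, combined with the envelope \ref{A:H3'}, lets dominated convergence force the contribution to vanish as $\min(n,m)\to\infty$; on the complementary event, Markov's inequality together with the uniform bound from the previous step provides an $O(R^{-1})$ remainder, which vanishes once $R\to\infty$. This yields the Cauchy property, and the limit $(y_\cdot,z_\cdot)\in H_\tau^2(\beta\mu_\cdot;\R^k\times\R^{k\times d})$ is then shown to satisfy \eqref{BSDE2.1} by a final dominated convergence argument applied in the integral form of the approximating BSDEs, relying on \ref{A:H1'}, \ref{A:H2'} and \ref{A:H3'}.
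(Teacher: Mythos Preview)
Your one-shot mollification/projection scheme has two genuine gaps that the paper's three-step structure is designed to avoid.

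First, the appeal to \cref{cor:3.2} for the approximating BSDE is not justified. The Lipschitz constant of your $g_n$ in $y$ is a random process, call it $L^{(n)}_t(\omega)$, governed by the size of $g(t,\cdot)$ on the ball $\{|y|\leq n\alpha_t+1/n\}$ together with the mollifier gradient; it has no a priori relation to $\mu_\cdot$. Applying \cref{cor:3.2} to $g_n$ therefore requires $\xi\in L_\tau^2(\beta L^{(n)}_\cdot;\R^k)$ and the analogue of \ref{A:H1} with weight $e^{\beta\int_0^s L^{(n)}_r{\rm d}r}$, neither of which follows from $\xi\in L_\tau^2(\beta\mu_\cdot;\R^k)$ and \ref{A:H1'}. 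The paper circumvents this by first truncating so that $|g(t,y)|\leq Ke^{-t}\alpha_t$ uniformly (Step~1); then the mollified $g_n$ has a \emph{deterministic} Lipschitz constant $K'_n e^{-t}$, and the classical result of \cite{ZChenBWang2000JAMS} applies in the usual (unweighted) space, after which one checks separately that the solution lies in $H_\tau^2(\beta\mu_\cdot;\R^k\times\R^{k\times d})$.

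Second, your Cauchy argument via the splitting $\{|y^m_s|\leq R\alpha_s\}$ versus its complement does not close. On the complementary event the only available envelope for $|g_n(s,y^m_s)-g_m(s,y^m_s)|$ is of the form $\overline\psi^{\alpha_\cdot}_{n+1}(s)+\overline\psi^{\alpha_\cdot}_{m+1}(s)+2|g(s,0)|$, which \emph{grows with $n,m$}; a uniform $S_\tau^2(\beta\mu_\cdot)$ bound on $y^m$ gives no pointwise control of the form $|y^m_s|\leq R\alpha_s$ (note $e^{\beta\int_0^s\mu_r{\rm d}r}\alpha_s$ can be arbitrarily small), so Markov's inequality does not produce a remainder that is $O(R^{-1})$ uniformly in $n,m$. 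The paper's key device is precisely to force the \emph{pointwise} bound $|y^n_t|\leq r\alpha_t$: this is achieved by the preliminary truncations $|\xi|\leq K\alpha_\tau^2$, $|g(t,0)|\leq Ke^{-t}\alpha_t^2$ together with the tailor-made cutoff $\theta_r^{\alpha_\cdot}$ and the conditional estimate \eqref{2.03} of \cref{pro:1.1}, exploiting that $\alpha_\cdot$ is non-increasing (see \eqref{22.101}). Once $|y^n_t|\leq r\alpha_t$ is in hand, \ref{A:H3'} applies directly and the Cauchy property follows from dominated convergence. The general $\xi$ and $g(\cdot,0)$ are recovered only afterwards (Step~3) by a further truncation. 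Without this layered truncation, your single-stage argument does not go through.
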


\begin{rmk}\label{rmk:4.2}
Assume that $g$ is independent of $z$ and satisfies \ref{A:H1'}-\ref{A:H4'}. It is not hard to verify that $(y_t,z_t)_{t\in[0,\tau]}$ is a weighted $L^2$-solution of BSDE \eqref{BSDE2.1} in the space of $H_\tau^2(\beta\mu_\cdot;\R^{k}\times\R^{k\times d})$ if and only if
$$
(\bar y_t,\bar z_t)_{t\in[0,\tau]}:=(e^{\beta \int_0^t\mu_r{\rm d}r}y_t,e^{\beta \int_0^t\mu_r{\rm d}r}z_t)_{t\in[0,\tau]}
$$
is a weighted-$L^2$ solution of BSDE$(e^{\beta \int_0^t\mu_r{\rm d}r}\xi,\tau,\bar g)$ in the space of $H_\tau^2(0;\R^{k}\times\R^{k\times d})$, where
$$
\bar g(t,y):=e^{\beta \int_0^t\mu_r{\rm d}r}g(t,e^{-\beta \int_0^t\mu_r{\rm d}r}y)-\beta \mu_t y,\ \ (\omega,t,y)\in \Omega\times [0,\tau]\times \R^k,
$$
and that, in view of $\beta\geq 1$, the generator $\bar g$ satisfies \ref{A:H1'}, \ref{A:H2'} and \ref{A:H4'} with $0$ instead of $\mu_\cdot$ Note that for each $r\in \R_+$,
$$
\begin{array}{l}
\Dis \sup_{|y|\leq r}|\bar g(t,y)-\bar g(t,0)|= \sup_{|y|\leq r}
\left|e^{\beta \int_0^t\mu_r{\rm d}r}g(t,e^{-\beta \int_0^t\mu_r{\rm d}r}y)-e^{\beta \int_0^t\mu_r{\rm d}r}g(t,0)-\beta \mu_t y \right|\vspace{0.1cm}\\
\hspace{3.3cm} \Dis \leq e^{\beta \int_0^t\mu_r{\rm d}r} \sup_{|y|\leq r}
\left|g(t,e^{-\beta \int_0^t\mu_r{\rm d}r}y)-g(t,0)\right|+\beta \mu_t  \sup_{|y|\leq r}|y|\vspace{0.1cm}\\
\hspace{3.3cm} \Dis =e^{\beta \int_0^t\mu_r{\rm d}r} \sup_{|y|\leq r e^{-\beta \int_0^t\mu_r{\rm d}r}}
\left|g(t,y)-g(t,0)\right|+\beta \mu_t r.
\end{array}
$$
In view of \ref{A:H3'} of $g$, in order to guarantee the following condition holds:
$$
\RE r\in \R_+,\ \ \E\left[\int_0^\tau \sup_{|y|\leq r}|\bar g(t,y)-\bar g(t,0)| {\rm d}t\right]<+\infty,
$$
which is necessary in \cite{Briand2003} and \cite{Xiao2015}, we need to further assume that
$$\E\left[\int_0^\tau \mu_t{\rm d}t\right]<+\infty,$$
and the process $\alpha_\cdot$ in assumption \ref{A:H3'} for $g$ needs to satisfy that $$\alpha_t\geq e^{-\beta \int_0^t\mu_s{\rm d}s},\ \ t\in [0,\tau].$$
Therefore, \cref{pro:2.1} can not be proved by using the method of exponential translation transformation via the corresponding results obtained in \cite{Briand2003} and \cite{Xiao2015}.
\end{rmk}

\begin{proof}[\bf Proof of \cref{pro:2.1}.]
Note that if assumption \ref{A:H3'} holds for the process $\alpha_\cdot$, then it is also true for $\overline{\alpha}_t:=\alpha_t\wedge (e^{-\beta \int_0^t\mu_s{\rm d}s}),\ t\in [0,\tau]$. Therefore, without loss of generality, we can assume that the process $\alpha_\cdot$ in assumption \ref{A:H3'} satisfies that
\begin{align}\label{*}
\alpha_t\leq e^{-\beta \int_0^t\mu_s{\rm d}s}\leq 1, \ t\in[0,\tau].
\end{align}
The following proof will be divided into three steps.\vspace{0.2cm}

{\bf First Step:} We first prove that BSDE \eqref{BSDE2.1} has a weighted $L^2$-solution in $H_\tau^2(\beta\mu_\cdot;\R^{k}\times\R^{k\times d})$ under assumptions \ref{A:H2'} and \ref{A:H4'}, provided that there exists a nonnegative constant $K$ such that
\begin{align}\label{2.6}
\begin{split}
|\xi|\leq K\alpha_\tau, \ |g(t,y)|\leq Ke^{-t}\alpha_t, \ t\in[0,\tau].\vspace{0.2cm}
\end{split}
\end{align}

For each $n\geq1$, let $\phi_n(x):=n^k\phi(nx)$, where $\phi(\cdot):\R^k\mapsto\R_+$ is a nonnegative $\mathcal{C}^\infty$ function with the unit ball as compact support and satisfying $\int_{\R^k}\phi(x){\rm d}x=1$. We define for each $n\geq1$ and $y\in \R^k$,
\begin{align}\label{2.100}
\begin{split}
g_n(t,y):&=\left(\phi_n(\cdot)\ast g(t,\cdot)\right)(y)\\
&=\int_{\R^k}\phi_n(x)g(t,y-x){\rm d}x=\int_{\R^k}\phi_n(y-x)g(t,x){\rm d}x\\
&=\int_{\R^k}\phi(x)g\left(t,y-\frac{x}{n}\right){\rm d}x=\int_{x:|x|\leq1}\phi(x)g\left(t,y-\frac{x}{n}\right){\rm d}x, \ t\in[0,\tau].
\end{split}
\end{align}
Then, $g_n(t,y)$ is a $(\F_t)$-progressively measurable process for each $y\in \R^k$ and it is straightforward to prove that $g_n$ satisfies assumptions \ref{A:H2'} and \ref{A:H4'} and tends locally uniformly to the generator $g$ as $n\rightarrow\infty$. Additionally, it follows from
\eqref{2.6} that for each $n\geq1$ and $y\in\R^k$,
$$|\nabla g_n(t,y)|\leq Ke^{-t}\alpha_t\int_{\R^k}|\nabla \phi_n(y-x)|{\rm d}x=Ke^{-t}\alpha_t\int_{z:|z|\leq{\frac{1}{n}}}|\nabla\phi_n(z)|{\rm d}z\leq K_n'e^{-t}, \ t\in[0,\tau],$$
where $K_n'>0$ is a constant depending only on $n$ and $K$. Thus, we have for each $n\geq1$ and $y_1, y_2\in\R^k$,
$$|g_n(t,y_1)-g_n(t,y_2)|\leq K_n'e^{-t}|y_1-y_2|, \ t\in[0,\tau].$$
Furthermore, according to \eqref{2.6} and \eqref{2.100}, for each $n\geq1$ and $y\in\R^k$ we have
\begin{align}\label{2.10}
|g_n(t,y)|\leq Ke^{-t}\alpha_t\leq Ke^{-t}, \ t\in[0,\tau],
\end{align}
and then
$$
\E\left[\left(\int_0^\tau|g_n(s,y)|{\rm d}s\right)^2\right]<+\infty.\vspace{0.1cm}
$$
Thus, for each $n\geq1$, the following BSDE
\begin{align}\label{BSDE2.13}
  y_t^n=\xi+\int_t^\tau g_n(s,y_s^n){\rm d}s-\int_t^\tau z_s^n{\rm d}B_s, \ \ t\in[0,\tau],
\end{align}
admits a unique solution $(y_t^n,z_t^n)_{t\in[0,\tau]}$ in the space of  $S_\tau^2(0;\R^k)\times M_\tau^2(0;\R^{k\times d})$. In fact, define $\overline{g}_{n,\tau}(t,y):=g_n(t,y){\bf 1}_{t\leq\tau}$ for $(\omega,t,y,z)\in \Omega\times[0,+\infty)\times \R^k\times \R^{k\times d}$. According to Theorem 1.2 in \cite{ZChenBWang2000JAMS}, we know that for each $n\geq1$, the following BSDE
\begin{align}\label{BSDE*2.13}
  \overline{y}_t^n=\xi+\int_t^{+\infty} \overline{g}_{n,\tau}(s,\overline{y}_s^n){\rm d}s-\int_t^{+\infty} \overline{z}_s^n{\rm d}B_s, \ \ t\geq 0,
\end{align}
admits a unique solution $(\overline{y}_t^n,\overline{z}_t^n)_{t\geq 0}$ in
$S_{+\infty}^2(0;\R^k)\times M_{+\infty}^2(0;\R^{k\times d})$. Then,
\begin{align*}
\overline{y}_\tau^n&=\xi+\int_\tau^{+\infty} g_n(s,\overline{y}_s^n){\bf 1}_{s\leq\tau}{\rm d}s-\int_\tau^{+\infty} \overline{z}_s^n{\rm d}B_s=\xi-\int_\tau^{+\infty} \overline{z}_s^n{\rm d}B_s.
\end{align*}
Taking conditional expectation with respect to $\F_\tau$ in the last identity yields that $\overline{y}_\tau^n=\E[\xi|\F_\tau]=\xi$ and
$$\int_\tau^{+\infty} \overline{z}_s^n{\rm d}B_s=\int_0^{+\infty} \overline{z}_s^n{\bf 1}_{s>\tau}{\rm d}B_s=0,$$
which means that $\E[\int_0^{+\infty} |\overline{z}_s^n{\bf 1}_{s>\tau}|^2{\rm d}s]=0$. Thus, $\overline{z}_t^n{\bf 1}_{t>\tau}=0$ and then $\overline{y}_t^n{\bf 1}_{t>\tau}=\xi$ for each $n\geq1$. Furthermore, by \eqref{BSDE*2.13} we have
\begin{align}
  \overline{y}_t^n=\xi+\int_t^{\tau} g_n(s,\overline{y}_s^n){\rm d}s-\int_t^{\tau} \overline{z}_s^n{\rm d}B_s, \ \ t\in[0,\tau].
\end{align}
Consequently, for each $n\geq1$, $(y_t^n,z_t^n)_{t\in[0,\tau]}:=(\overline{y}_t^n,\overline{z}_t^n)_{t\in[0,\tau]}$
is the desired unique solution of \eqref{BSDE2.13}.

In the sequel, according to \eqref{*}, \eqref{2.6}, \eqref{2.10} and \eqref{BSDE2.13}, we deduce that for each $n\geq1$ and $t\geq0$,
\begin{align}\label{4.16**}
\begin{split}
|y_{t\wedge\tau}^n|e^{\beta \int_0^{t\wedge\tau}\mu_s{\rm d}s}&=|\E\left[y_{t\wedge\tau}^n|\F_{t\wedge\tau}\right]|e^{\beta \int_0^{t\wedge\tau}\mu_s{\rm d}s}\leq\E\left[|\xi|+\int_{t\wedge\tau}^\tau |g_n(s,y_s^n)|{\rm d}s\bigg|\F_{t\wedge\tau}\right]e^{\beta \int_0^{t\wedge\tau}\mu_s{\rm d}s}\\
&\leq \E\left[e^{\beta \int_0^\tau \mu_s{\rm d}s}|\xi|\bigg|\F_{t\wedge\tau}\right]+\E\left[\int_{t\wedge\tau}^\tau e^{\beta \int_0^s\mu_r{\rm d}r}|g_n(s,y_s^n)|{\rm d}s\bigg|\F_{t\wedge\tau}\right]\\
&\leq K+\E\left[\int_0^\tau Ke^{-s}{\rm d}s\bigg|\F_{t\wedge\tau}\right]\leq2K.
\end{split}
\end{align}
Hence, $y_\cdot^n\in S_\tau^2(\beta\mu_\cdot;\R^k)$. Furthermore, it follows from \eqref{2.10} that for each $n\geq1$ and $y\in\R^k$, $$\left<\hat{y},g_n(t,y)\right>\leq Ke^{-t}\alpha_t\leq \mu_t|y|+Ke^{-t}\alpha_t, \ t\in[0,\tau].$$
This implies that assumption (A) is satisfied by $g_n(t,y)$ with $\bar\mu_t=\mu_t$, $\bar\nu_t=0$, $f_t=Ke^{-t}\alpha_t$. Then, in view of \eqref{4.16**}, by \cref{pro:1.1} we can conclude that for each $n\geq1$, $z_\cdot^n\in M_\tau^2(\beta\mu_\cdot;\R^k)$, and then $(y_\cdot^n,z_\cdot^n)\in H_\tau^2(\beta\mu_\cdot;\R^{k}\times\R^{k\times d})$.\vspace{0.2cm}

Now, we show that $\{(y_\cdot^n,z_\cdot^n)\}^{+\infty}_{n=1}$ is a Cauchy sequence in $H_\tau^2(\beta\mu_\cdot;\R^{k}\times\R^{k\times d})$. For each $n,m\geq1$, let $\hat{y}_\cdot^{n,m}:=y_\cdot^n-y_\cdot^m$, $\hat{z}_\cdot^{n,m}:=z_\cdot^n-z_\cdot^m$. Then,
\begin{align}\label{BSDE:3.5}
\hat y_t^{n,m}=\int_t^\tau \hat{g}^{n,m}(s,\hat y_s^{n,m}){\rm d}s-\int_t^\tau\hat z_s^{n,m}{\rm d}B_s, \ t\in[0,\tau],
\end{align}
where for each $y\in \R^k$, $$\hat{g}^{n,m}(s,y):=g_{n}(s,y+y_s^m)-g_m(s,y_s^m), \ s\in[0,\tau].$$
It follows from the assumption \ref{A:H4'} of $g_n$ that
\begin{align*}
&\left<\hat{y},\hat{g}^{n,m}(t,y)\right>=\left<\hat{y},g_{n}(t,y+y_t^m)
-g_m(t,y_t^m)\right>\\
&\ \ = \left<\hat{y},g_{n}(t,y+y_t^m)-g_n(t,y_t^m)+g_n(t,y_t^m)-g_m(t,y_t^m)\right>\\
&\ \ \leq  \mu_t|y|+|g_n(t,y_t^m)-g_m(t,y_t^m)|, \ t\in[0,\tau].
\end{align*}
Hence, in view of \eqref{2.10}, the generator $\hat{g}^{n,m}$ satisfies assumption (A) with $\bar\mu_t=\mu_t$, $\bar\nu_t=0$ and $f_t=|g_n(t,y_t^m)-g_m(t,y_t^m)|$. Then, by \eqref{2.03} of \cref{pro:1.1} with $r=t=0$ we deduce that there exists a uniform constant $C>0$ such that
\begin{align}\label{2.101}
\begin{split}
&\E\left[\sup_{s\in[0,\tau]}\left(e^{2\beta \int_0^s\mu_r{\rm d}r}|\hat{y}_s^{n,m}|^2\right)\right]+\E\left[\int_{0}^{\tau}e^{2\beta \int_0^s\mu_r{\rm d}r}|\hat{z}_s^{n,m}|^2{\rm d}s\right]\\
&\ \ \leq C\E\left[\left(
\int_{0}^{\tau}e^{\beta \int_0^s\mu_r{\rm d}r}|g_n(s,y_s^m)-g_m(s,y_s^m)|{\rm d}s\right)^2\right].
\end{split}
\end{align}
Furthermore, from \eqref{2.100} we know that for each $n, m\geq1$ and $s\in[0,\tau]$,
$$|g_n(s,y_s^m)-g_m(s,y_s^m)|\leq\int_{x:|x|\leq1}\phi(x)\bigg|g\left(s,y_s^m-\frac{x}{n}\right)-g\left(s,y_s^m-\frac{x}{m}\right)\bigg|{\rm d}x.$$
In view of \ref{A:H2'}, \eqref{4.16**} and the fact that a function which is continuous on a compact set is uniformly continuous on the set, we deduce that for each $x\in \R^k$ as $n, m\rightarrow+\infty$, $$\bigg|g\left(s,y_s^m-\frac{x}{n}\right)-g\left(s,y_s^m-\frac{x}{m}\right)\bigg|\rightarrow0, \ s\in[0,\tau].$$
Additionally, \eqref{2.6} indicates that for each $n, m\geq1$ $$\bigg|g\left(s,y_s^m-\frac{x}{n}\right)-g\left(s,y_s^m-\frac{x}{m}\right)\bigg|\leq 2Ke^{-s}\alpha_s, \ s\in[0,\tau].$$
Consequently, by using Lebesgue's dominated convergence theorem twice, it can be shown that the right-hand side of inequality \eqref{2.101} tends to 0 as $n, m\rightarrow+\infty$, and then
\begin{align*}
\begin{split}
\lim\limits_{n,m\rightarrow\infty}\left\{\E\left[\sup_{s\in[0,\tau]}\left(e^{2\beta \int_0^s\mu_r{\rm d}r}|\hat{y}_s^{n,m}|^2\right)\right]+\E\left[\int_{0}^{\tau}e^{2\beta \int_0^s\mu_r{\rm d}r}|\hat{z}_s^{n,m}|^2{\rm d}s\right]\right\}=0.
\end{split}
\end{align*}
This means that $\{(y_\cdot^n,z_\cdot^n)\}^{+\infty}_{n=1}$ is a Cauchy sequence in $H_\tau^2(\beta\mu_\cdot;\R^{k}\times\R^{k\times d})$.\vspace{0.2cm}

Finally, we denote by $(y_t,z_t)_{t\in[0,\tau]}$ the limit of the Cauchy sequence $\{(y_t^n,z_t^n)_{t\in[0,\tau]}\}_{n=1}^\infty$ in the space of $H_\tau^2(\beta\mu_\cdot;\R^{k}\times\R^{k\times d})$, and pass to the limit under the uniform convergence in probability (ucp for short) for BSDE \eqref{BSDE2.13}, by combining \eqref{2.100}, \eqref{2.10}, \eqref{4.16**}, \ref{A:H2'} and Lebesgue's dominated convergence theorem, to see that $(y_t,z_t)_{t\in[0,\tau]}\in H_\tau^2(\beta\mu_\cdot;\R^{k}\times\R^{k\times d})$ is a weighted $L^2$-solution of BSDE \eqref{BSDE2.1}.\vspace{0.2cm}

{\bf Second Step:} Under assumptions \ref{A:H2'}-\ref{A:H4'}, we prove that BSDE \eqref{BSDE2.1} admits a weighted $L^2$-solution in $H_\tau^2(\beta\mu_\cdot;\R^{k}\times\R^{k\times d})$, provided that there exists a nonnegative constant $K$ such that
\begin{align}\label{22.6}
\begin{split}
|\xi|\leq K \alpha_\tau^2, \ |g(t,0)|\leq Ke^{-t}\alpha_t^2, \ t\in[0,\tau],
\end{split}
\end{align}
where $\alpha_t$ is defined in \ref{A:H3'} and satisfies \eqref{*}.\vspace{0.2cm}

Assume now that \ref{A:H2'}-\ref{A:H4'} and \eqref{22.6} hold. For some fixed positive real $r>0$ assigned later, we define the following function: for each $u\geq0$ and $t\in[0,\tau]$,
\begin{align}\label{Matrix}
\theta_{r}^{\alpha_\cdot}(t,u):=\left\{\begin{aligned}
&\alpha_t, \quad \quad \quad \quad \quad \quad \quad 0\leq u\leq r\alpha_t;\\
&-u+(r+1)\alpha_t, \quad r\alpha_t< u\leq(r+1)\alpha_t;\\
&0, \quad \quad \quad \quad \quad \quad \quad \quad u>(r+1)\alpha_t.
\end{aligned}\right.
\end{align}
It is clear that for each $r>0$, $u,u_1,u_2\geq0$ and $t\in[0,\tau]$,
\begin{align}\label{662.25}
0\leq\theta_{r}^{\alpha_\cdot}(t,u)\leq \alpha_t,
\end{align}
and
\begin{align}\label{2.25}
|\theta_{r}^{\alpha_\cdot}(t,u_1)-\theta_{r}^{\alpha_\cdot}(t,u_2)| \leq|u_1-u_2|.
\end{align}
Thus we define for each $n\geq1$ and $y\in \R^k$,
$$g^n(t,y):=\theta_{r}^{\alpha_\cdot}(t,|y|)\left(g(t,y)-g(t,0)\right) \frac{n{e^{-t}}}{{\overline{\psi}_{r+1}^{\alpha_\cdot}(t)}\vee \left(ne^{-t}\alpha_t\right)}+g(t,0), \ t\in[0,\tau].$$
Then by the definitions of $\overline{\psi}_{r+1}^{\alpha_\cdot}(\cdot)$ and $\theta_{r}^{\alpha_\cdot}$ togher with \eqref{662.25} and \eqref{22.6}, we can conclude that for each $n\geq1$ and $y\in \R^k$,
$$|g^n(t,y)|\leq ne^{-t}\alpha_t+Ke^{-t}\alpha_t^2\leq(n+K)e^{-t}\alpha_t, \ t\in[0,\tau].$$
Clearly, $g^n$ satisfies \ref{A:H2'} for each $n\geq1$. Moreover, we can prove that $g^n(t,y)$ satisfies \ref{A:H4'} with a process $\tilde{\mu}_\cdot$ instead of $\mu_\cdot$ as follows. In fact, for each pair of $y_1, y_2\in \R^k$ and fixed $t\in[0,\tau]$, if $|y_1|>(r+1)\alpha_t$ and $|y_2|>(r+1)\alpha_t$, \ref{A:H4'} is trivially satisfied and thus we reduce to the case where $|y_2|\leq (r+1)\alpha_t$. Then we can deduce that for each $n\geq1$,
\begin{align}\label{4.35*}
\begin{split}
&\left\langle y_1-y_2,g^n(t,y_1)-g^n(t,y_2)\right\rangle\\
&\ \ = \frac{ne^{-t}}{{\overline{\psi}_{r+1}^{\alpha_\cdot}(t)}\vee \left(ne^{-t}\alpha_t\right)}\theta_{r}^{\alpha_\cdot}(t,|y_1|)\left\langle y_1-y_2,g(t,y_1)-g(t,y_2)\right\rangle\\
& \hspace{0.6cm}+\frac{ne^{-t}}{{\overline{\psi}_{r+1}^{\alpha_\cdot}(t)}\vee \left(ne^{-t}\alpha_t\right)}\left(\theta_{r}^{\alpha_\cdot}(t,|y_1|) -\theta_{r}^{\alpha_\cdot}(t,|y_2|)\right)
\cdot \left\langle y_1-y_2,g(t,y_2)-g(t,0)\right\rangle, \ t\in[0,\tau].
\end{split}
\end{align}
By virtue of \eqref{662.25} and \ref{A:H4'} for $g$ we know that the first term on the right-hand side of the last inequality is equal or lesser than $\mu_t|y_1-y_2|^2$. For the second term on the right-hand side, it follows from \eqref{2.25}, triangle inequality and the definition of $\overline{\psi}_{r+1}^{\alpha_\cdot}(\cdot)$ that
$$|\theta_{r}^{\alpha_\cdot}(t,|y_1|) -\theta_{r}^{\alpha_\cdot}(t,|y_2|)|\leq||y_1|-|y_2||\leq|y_1-y_2|$$
and
$$\left\langle y_1-y_2,g(t,y_2)-g(t,0)\right\rangle\leq|y_1-y_2||g(t,y_2)-g(t,0)| \leq|y_1-y_2|\overline{\psi}_{r+1}^{\alpha_\cdot}(t).$$
This implies that the second term on the right-hand side of \eqref{4.35*} is equal or lesser than $ne^{-t}|y_1-y_2|^2$. Thus, we have that for each $n\geq1$ and $y_1,y_2\in \R^k$,
$$\left\langle y_1-y_2,g^n(t,y_1)-g^n(t,y_2)\right\rangle\leq \tilde{\mu}_t|y_1-y_2|^2, \ t\in[0,\tau],$$
where $\tilde{\mu}_t:=\mu_t+ne^{-t}$. Hence, $g^n$ also satisfies \ref{A:H4'} with $\tilde{\mu}_\cdot$ instead of $\mu_\cdot$ for each $n\geq1$. Since the difference between $\tilde{\mu}_t$ and $\mu_t$ is a deterministic function $ne^{-t}$, the space $H_\tau^2(\beta\mu_\cdot;\R^{k}\times\R^{k\times d})$ does not change when the process $\mu_t$ is replaced with $\tilde{\mu}_t$, We can conclude from the conclusion of the first step that for each $n\geq1$, BSDE$(\xi,\tau,g^n)$ admits a weighted $L^2$-solution $(y_t^n,z_t^n)_{t\in[0,\tau]}$ in $H_\tau^2(\beta\mu_\cdot;\R^{k}\times\R^{k\times d})$.\vspace{0.2cm}

Furthermore, \ref{A:H4'} for $g$ together with \eqref{662.25} and \eqref{22.6} indicates that for $n\geq1$ and $y\in \R^k$,
\begin{align*}
\begin{split}
\left\langle \hat{y},g^n(t,y)\right\rangle=&\left\langle \hat{y},g(t,y)-g(t,0)\right\rangle\theta_{r}^{\alpha_\cdot}(t,|y|) \frac{ne^{-t}}{{\overline{\psi}_{r+1}^{\alpha_\cdot}(t)}\vee \left(ne^{-t}\alpha_t\right)}+\left\langle \hat{y},g(t,0)\right\rangle\\
\leq&\mu_t|y|+Ke^{-t}\alpha_t^2, \ t\in[0,\tau].
\end{split}
\end{align*}
In view of the definition of $\alpha_\cdot$, \eqref{*} and \eqref{22.6}, by \eqref{2.03} of \cref{pro:1.1} with $\bar\mu_t=\mu_t$, $\bar\nu_t=0$  and $f_t=Ke^{-t}\alpha_t^2$ we get that there exists a constant $C>0$ such that for each $n\geq1$ and $t\geq0$,
\begin{align}\label{22.101}
\begin{split}
&|y_{t\wedge\tau}^{n}|^2\leq e^{2\beta \int_0^{t\wedge\tau}\mu_s{\rm d}s}|y_{t\wedge\tau}^{n}|^2\\
&\ \ \leq  C\left(\E\left[e^{2\beta \int_0^\tau\mu_s{\rm d}s}|\xi|^2\bigg|\F_{t\wedge\tau}\right]+\E\left[\left(
\int_{t\wedge\tau}^{\tau}e^{\beta \int_0^s\mu_r{\rm d}r}Ke^{-s}\alpha_s^2{\rm d}s\right)^2\bigg|\F_{t\wedge\tau}\right]\right)\\
&\ \ \leq C\left(\E\left[e^{2\beta \int_0^\tau\mu_s{\rm d}s}K^2 \alpha_{\tau}^2\alpha_{t\wedge\tau}^2\bigg|\F_{t\wedge\tau}\right] +\E\left[\left(\int_{t\wedge\tau}^{\tau}e^{\beta \int_0^s\mu_r{\rm d}r}Ke^{-s}\alpha_s\alpha_{t\wedge\tau}{\rm d}s\right)^2\bigg|\F_{t\wedge\tau}\right]\right)\\
&\ \ \leq C\left(\E\left[e^{2\beta \int_0^\tau\mu_s{\rm d}s}K^2 \alpha_{\tau}^2\bigg|\F_{t\wedge\tau}\right] +\E\left[\left(\int_{t\wedge\tau}^{\tau}e^{\beta \int_0^s\mu_r{\rm d}r}Ke^{-s}\alpha_s{\rm d}s\right)^2\bigg|\F_{t\wedge\tau}\right]\right)\alpha_{t\wedge\tau}^2\\
&\ \ \leq r^2\alpha_{t\wedge\tau}^2,
\end{split}
\end{align}
where $r^2:=2K^2C$.
Then for each $n\geq1$ and $t\in[0,\tau]$, we have
\begin{align}\label{2.32}
|y_t^n|\leq r\alpha_t.
\end{align}
Thus, in light of the definition of $\theta_{r}^{\alpha_\cdot}$, $g^n$ can be replaced with
$$\tilde{g}^n(t,y):=\left(g(t,y)-g(t,0)\right) \frac{ne^{-t}\alpha_t}{{\overline{\psi}_{r+1}^{\alpha_\cdot}(t)}\vee \left(ne^{-t}\alpha_t\right)}+g(t,0), \ t\in[0,\tau],$$
and $(y_t^n,z_t^n)_{t\in[0,\tau]}\in H_\tau^2(\beta\mu_\cdot;\R^{k}\times\R^{k\times d})$ is a weighted $L^2$-solution of BSDE$(\xi,\tau,\tilde{g}^n)$.

Next, we prove that  $\{(y_\cdot^n,z_\cdot^n)\}^{+\infty}_{n=1}$ is a Cauchy sequence in $H_\tau^2(\beta\mu_\cdot;\R^{k}\times\R^{k\times d})$. For each $n,i\geq1$, let $\hat{y}_\cdot^{n,i}:=y_\cdot^{n+i}-y_\cdot^n$, $\hat{z}_\cdot^{n,i}:=z_\cdot^{n+i}-z_\cdot^n$. Then we have
\begin{align}\label{BSDE:3.5}
\hat y_t^{n,i}=\int_t^\tau \tilde{g}^{n,i}(s,\hat y_s^{n,i}){\rm d}s-\int_t^\tau\hat z_s^{n,i}{\rm d}B_s, \ t\in[0,\tau],
\end{align}
where for each $y\in \R^k$, $$\tilde{g}^{n,i}(s,y):=\tilde{g}^{n+i}(s,y+y_s^n)-\tilde{g}^{n}(s,y_s^n).$$
From the definition of $\tilde{g}^n$, we can deduce that for each $n, i\geq1$, $y\in \R^k$ and $t\in[0,\tau]$,
\begin{align}\label{21010}
\begin{split}
\left\langle \hat{y},\tilde{g}^{n,i}(t,y)\right\rangle=&\left\langle \hat{y},\tilde{g}^{n+i}(t,y+y_t^n)-\tilde{g}^{n+i}(t,y_t^n)+\tilde{g}^{n+i}(t,y_t^n)-\tilde{g}^{n}(t,y_t^n)\right\rangle\\
=&\left\langle \hat{y},g(t,y+y_t^n)-g(t,y_t^n)\right\rangle\frac{(n+i)e^{-t}\alpha_t}{{\overline{\psi}_{r+1}^{\alpha_\cdot}(t)}\vee \left((n+i)e^{-t}\alpha_t\right)}\\
&+\left\langle \hat{y},g(t,y_t^n)-g(t,0)\right\rangle\Psi_{r,n,i}^{\alpha_\cdot}(t),
\end{split}
\end{align}
where
$$
\Psi_{r,n,i}^{\alpha_\cdot}(t)=\frac{(n+i)e^{-t}\alpha_t}{{\overline{\psi}_{r+1}^{\alpha_\cdot}(t)}\vee \left((n+i)e^{-t}\alpha_t\right)}-\frac{ne^{-t}\alpha_t}{{\overline{\psi}_{r+1}^{\alpha_\cdot}(t)}\vee \left(ne^{-t}\alpha_t\right)}.\vspace{0.1cm}
$$
It follows from assumption \ref{A:H4'} that the first term on the right-hand side of \eqref{21010} is equal or lesser than $\mu_t|y|$. It is clear that if $\overline{\psi}_{r+1}^{\alpha_\cdot}(t)\leq ne^{-t}\alpha_t$, then  $\Psi_{r,n,i}^{\alpha_\cdot}(t)=0$; if ${n}e^{-t}\alpha_t<\overline{\psi}_{r+1}^{\alpha_\cdot}(t)\leq (n+i)e^{-t}\alpha_t$, then $0<\Psi_{r,n,i}^{\alpha_\cdot}(t)<1$; if $\overline{\psi}_{r+1}^{\alpha_\cdot}(t)> (n+i)e^{-t}\alpha_t$, then $0<\Psi_{r,n,i}^{\alpha_\cdot}(t)<1$. In addition, since \eqref{2.32} holds, we have that $|g(t,y_t^n)-g(t,0)|\leq\overline{\psi}_{r}^{\alpha_\cdot}(t)\leq\overline{\psi}_{r+1}^{\alpha_\cdot}(t)$. From the above discussions, we can conclude that for each $n, i\geq1$ and $y\in \R^k$,
$$\left\langle \hat{y},\tilde{g}^{n,i}(t,y)\right\rangle\leq \mu_t|y|+\overline{\psi}_{r+1}^{\alpha_\cdot}(t){\bf 1}_{\overline{\psi}_{r+1}^{\alpha_\cdot}(t)>{{n}e^{-t}\alpha_t}}, \ t\in[0,\tau].$$
So $\tilde{g}^{n,i}(t,y)$ satisfies assumption (A) with $\bar\mu_t=\mu_t$, $\bar\nu_t=0$ and  $f_t=\overline{\psi}_{r+1}^{\alpha_\cdot}(t){\bf 1}_{\overline{\psi}_{r+1}^{\alpha_\cdot}(t)>{{n}e^{-t}\alpha_t}}$.
On the other hand, in view of \eqref{22.101}, we have that for each $t\in[0,\tau]$,
\begin{align}\label{662101}
\begin{split}
e^{2\beta \int_0^t\mu_s{\rm d}s}|\hat{y}_t^{n,i}|^2\leq 2r^2,
\end{split}
\end{align}
thus $\hat{y}_\cdot^{n,i}\in H_\tau^2(\beta\mu_\cdot;\R^{k}\times\R^{k\times d})$.
On the basis of \eqref{662101} and \eqref{2.04} in \cref{pro:1.1} with $r=t=0$, we deduce that for each $n, i\geq1$, there exists a uniform constant $C>0$ such that
\begin{align}\label{2101.1}
\begin{split}
&\E\left[\sup_{s\in[0,\tau]}\left(e^{2\beta \int_0^s\mu_r{\rm d}r}|\hat{y}_s^{n,i}|^2\right)\right]+\E\left[\int_{0}^{\tau}e^{2\beta \int_0^s\mu_r{\rm d}r}|\hat{z}_s^{n,i}|^2{\rm d}s\right]\\
&\ \ \leq C\E\left[
\int_{0}^{\tau}e^{2\beta \int_0^s\mu_r{\rm d}r}|\hat{y}_s^{n,i}|\overline{\psi}_{r+1}^{\alpha_\cdot}(s){\bf 1}_{\overline{\psi}_{r+1}^{\alpha_\cdot}(s)>{{n}e^{-s}\alpha_s}}{\rm d}s\right]\\
&\ \ \leq \sqrt{2}rC\E\left[
\int_{0}^{\tau}e^{\beta \int_0^s\mu_r{\rm d}r}\overline{\psi}_{r+1}^{\alpha_\cdot}(s){\bf 1}_{\overline{\psi}_{r+1}^{\alpha_\cdot}(s)>{{n}e^{-s}\alpha_s}}{\rm d}s\right].
\end{split}
\end{align}
According to \ref{A:H3'} and Lebesgue's dominated convergence theorem we deduce that the right-hand side of the last inequality tends to 0 as $n\rightarrow\infty$. Consequently, by taking first the supremum with respect to $i$ and then taking the limit with respect to $n$ on both sides of \eqref{2101.1}, we can conclude that $\{(y_\cdot^n,z_\cdot^n)\}^{+\infty}_{n=1}$ is a Cauchy sequence in the space of $H_\tau^2(\beta\mu_\cdot;\R^{k}\times\R^{k\times d})$.\vspace{0.2cm}

Finally, by taking the limit of BSDE$(\xi,\tau,\tilde{g}^n)$ under ucp, it can be concluded that under the assumptions of this step, BSDE \eqref{BSDE2.1} admits a weighted $L^2$-solution in the space of $H_\tau^2(\beta\mu_\cdot;\R^{k}\times\R^{k\times d})$.\vspace{0.2cm}

{\bf Third Step:} We remove the additional condition \eqref{22.6} and prove the existence of a weighted $L^2$-solution in $H_\tau^2(\beta\mu_\cdot;\R^{k}\times\R^{k\times d})$ to BSDE \eqref{BSDE2.1} under assumptions \ref{A:H1'}-\ref{A:H4'}.\vspace{0.2cm}

For each $x\in\R^k$, $r>0$ and $n\geq1$, let $q_r(x):=\frac{xr}{|x|\vee r}$, $\xi_n:=q_{n\alpha_{\tau}^2} (\xi)$ and
\begin{align}\label{3.1}
\overline{g}_n(t,y):=g(t,y)-g(t,0)+q_{ne^{-t} \alpha_{t}^2}(g(t,0)), \ t\in[0,\tau],
\end{align}
where $\alpha_\cdot$ is defined in assumption \ref{A:H3'}.
Then, for each $n\geq1$ we have
\begin{align}\label{1.13}
|\xi_n|\leq n\alpha_{\tau}^2,   \ |\overline{g}_n(t,0)|\leq ne^{-t}\alpha_{t}^2, \ t\in[0,\tau].
\end{align}
The definitions of $\xi_n$ and $g_n$ indicate that
\begin{align}\label{1.14}
\begin{split}
|\xi_{n+i}-\xi_{n}|\leq |\xi|{\bf 1}_{|\xi|>n\alpha_{\tau}^2}, \ \left|q_{(n+i)e^{-s}\alpha_{s}^2}(g(s,0))-q_{ne^{-s}\alpha_{s}^2}(g(s,0))\right|\leq |g(s,0)|{\bf 1}_{|g(s,0)|>ne^{-s}\alpha_{s}^2}.
\end{split}
\end{align}
Furthermore, by combining \eqref{1.13} and \ref{A:H1'}-\ref{A:H4'} of $g$, we know that $\overline{g}_n$ satisfies \ref{A:H2'}-\ref{A:H4'} for each $n\geq1$, and then  $\xi_n$ and $\overline{g}_n$ satisfy all assumptions in the second step. Then, BSDE$(\xi_n,\tau,\overline{g}_n)$ admits a weighted $L^2$-solution in $H_\tau^2(\beta\mu_\cdot;\R^{k}\times\R^{k\times d})$ for each $n\geq1$, denoted by $(y_t^n,z_t^n)_{t\in[0,\tau]}$.\vspace{0.2cm}

In the sequel, for each pair of integers $n, i\geq1$, let
$$\hat{\xi}^{n,i}:=\xi_{n+i}-\xi_n, \  \hat y_.^{n,i}:=y_.^{n+i}-y_.^n, \ \hat z_.^{n,i}:=z_.^{n+i}-z_.^n.$$
Then
\begin{align}\label{3.5}
\hat y_t^{n,i}=\hat{\xi}^{n,i}+\int_t^\tau\hat{g}^{n,i}(s,\hat y_s^{n,i}){\rm d}s-\int_t^\tau\hat z_s^{n,i}{\rm d}B_s, \ t\in[0,\tau],
\end{align}
where for each $y\in \R^k$, $$\hat{g}^{n,i}(s,y):=\overline{g}_{n+i}(s,y+y_s^n)-\overline{g}_n(s,y_s^n).$$
It follows from \ref{A:H4'} that for each $y\in \R^k$,
\begin{align*}
\begin{split}
\left<\hat{y},\hat{g}^{n,i}(t,y)\right>&=\left<\hat{y}, \overline{g}_{n+i}(t,y+y_t^n)-\overline{g}_{n+i}(t,y_t^n)+ \overline{g}_{n+i}(t,y_t^n)-\overline{g}_n(t,y_t^n)\right>\\
&\leq \mu_t|y| +\bigg|q_{(n+i)e^{-t}\alpha_{t}^2}(g(t,0)) -q_{ne^{-t}\alpha_{t}^2}(g(t,0))\bigg|, \ t\in[0,\tau].
\end{split}
\end{align*}
This implies that $\hat{g}^{n,i}$ satisfies assumption \ref{A:A} with $\bar\mu_t={\mu}_t$, $\bar\nu_t=0$ and $f_t=|g(t,0)|{\bf 1}_{|g(t,0)|>ne^{-t}\alpha_{t}^2}.$
By virtue of \eqref{2.03} in \cref{pro:1.1} with $r=t=0$ and \eqref{1.14}, we deduce that for each $n, \ i\geq1$, there exists a uniform constant $C>0$ such that
\begin{align}\label{3.6}
\begin{split}
&\E\left[\sup_{s\in[0,\tau]}\left(e^{2\beta \int_0^s\mu_r{\rm d}r}|\hat{y}_s^{n,i}|^2\right)\right]+\E\left[\int_{0}^{\tau}e^{2\beta \int_0^s\mu_r{\rm d}r}|\hat{z}_s^{n,i}|^2{\rm d}s\right]\\
&\ \ \leq  C\E\left[e^{2\beta \int_0^\tau \mu_r{\rm d}r}|\xi|^2{\bf 1}_{|\xi|>n\alpha_{\tau}^2}+\left(\int_0^\tau e^{\beta \int_0^s\mu_r{\rm d}r}|g(s,0)|{\bf 1}_{|g(s,0)|>ne^{-s}\alpha_{s}^2}{\rm d}s\right)^2\right].
\end{split}
\end{align}
Furthermore, by taking first the supremum with respect to $i$ and then taking the upper limit with respect to $n$ in both sides of \eqref{3.6} and by using Lebesgue's dominated convergence theorem, we know that $\{(y_\cdot^n,z_\cdot^n)\}^{+\infty}_{n=1}$ is a Cauchy sequence in $H_\tau^2(\beta\mu_\cdot;\R^{k}\times\R^{k\times d})$. Next, we denote by $(y_t,z_t)_{t\in[0,\tau]}$ the limit of the Cauchy sequence $\{(y_t^n,z_t^n)_{t\in[0,\tau]}\}_{n=1}^\infty$ in $H_\tau^2(\beta\mu_\cdot;\R^{k}\times\R^{k\times d})$, and pass to the limit under ucp for BSDE$(\xi_n,\tau,\overline{g}_n)$, considering the definitions  of $\xi_n$ and $\overline{g}_n$ along with the assumptions of $g$, to see that $(y_t,z_t)_{t\in[0,\tau]}$ solves BSDE \eqref{BSDE2.1}. Thus the proof of \cref{pro:2.1} is complete.
\end{proof}

\begin{rmk}\label{rmk:4.3}
As stated in the introduction, in order to establish the existence of solutions to BSDEs with random terminal time and generator $g$ having a general growth in $y$, the authors in \cite{Pardoux1999}, \cite{BriandCarmona2000}, \cite{Briand2003}, \cite{FanJiang2013}, \cite{Xiao2015} and \cite{XiaoandFan2017} developed successively the truncation technique, the approach proving convergence of the sequence via the established a priori estimate, the convolution approaching technique and the weak convergence method, most of which are systemically used to prove \cref{pro:2.1}. However, some new difficulties arise naturally under the assumptions of \cref{pro:2.1} due to the following facts:
\begin{enumerate}
\renewcommand{\theenumi}{a)}
\renewcommand{\labelenumi}{\theenumi}
\item\label{A:1} we want to find a solution in the space of $H_\tau^2(\beta\mu_\cdot;\R^{k}\times\R^{k\times d})$, not the usual space $H_\tau^2(0;\R^{k}\times\R^{k\times d})$;
\renewcommand{\theenumi}{b)}
\renewcommand{\labelenumi}{\theenumi}
\item\label{A:2} we do not suppose any moment integrability on $\int_0^\tau\mu_t{\rm d}t$;
\renewcommand{\theenumi}{c)}
\renewcommand{\labelenumi}{\theenumi}
\item\label{A:3} assumption \ref{A:H3} is strictly weaker than \ref{A:H3*} used in existing literature due to the presence of $\alpha_\cdot$.
\end{enumerate}
For that, we develop some innovative ideas to overcome these difficulties. Notably, except for the a priori estimate-\cref{pro:1.1}, the construction of process $\theta_r^{\alpha_\cdot}$ defined in \eqref{Matrix} and the truncation way for $\xi$, $g(t,0)$ and $g(t,\cdot)$ used in \eqref{2.6} and \eqref{22.6} are the key points. It ensure that for each $n\geq1$, the solution $(y_\cdot^n,z_\cdot^n)$ of BSDE$(\xi,\tau,g^n)$ constructed in the second step belongs to the same space $H_\tau^2(\beta\mu_\cdot;\R^{k}\times\R^{k\times d})$. And, the computations in \eqref{4.16**} and \eqref{22.101} also play crucial roles in the proof of \cref{pro:2.1} in order to obtain the uniform bound of the sequence of processes $y_\cdot^ne^{\beta\int_0^\cdot\mu_s{\rm d}s}$ so that $(y_\cdot^n,z_\cdot^n)$ is also the unique solution of BSDE$(\xi,\tau,\tilde{g}^n)$. To the best of our knowledge, these arguments mentioned above are totally new, compared to those in existing results.
\end{rmk}

\subsection{The general case}

In this subsection, with \cref{pro:2.1} in hand we can prove the general case of \cref{thm:3.1}.

\begin{proof}[\bf Proof of the existence part of Theorem \ref{thm:3.1}.]
The proof is divided into two steps.\vspace{0.2cm}

{\bf First Step:} Assume that $g$ satisfies assumptions \ref{A:H1}-\ref{A:H5} with processes $\mu_\cdot$ and $\nu_\cdot$. Recalling that  $a_t:=\beta\mu_t+\frac{\rho}{2}\nu_t^2$ satisfies $\int_0^\tau a_t{\rm d}t<+\infty$. In this step, we will use the conclusion of \cref{pro:2.1} to prove that for each $\xi\in L_\tau^2(a_\cdot;\R^k)$ and $V_\cdot\in M_\tau^2(a_\cdot;\R^{k\times d})$, the following BSDE admits a unique weighted $L^2$-solution in $H_\tau^2(a_\cdot;\R^{k}\times\R^{k\times d})$:
\begin{align}\label{BSDE3.1}
  y_t=\xi+\int_t^\tau g(s,y_s,V_s){\rm d}s-\int_t^\tau z_s{\rm d}B_s, \ \ t\in[0,\tau].
\end{align}
We first prove that  $g(t,y,V_t)$ satisfies assumptions \ref{A:H1'}-\ref{A:H4'} appearing in \cref{pro:2.1}. In fact, it is clear that $g(t,y,V_t)$ satisfies \ref{A:H2'} and \ref{A:H4'}. Moreover, by \ref{A:H1} and \ref{A:H5} of $g$ and H${\rm \ddot{o}}$lder's inequality we derive that
\begin{align}\label{0.35}
&\E\left[\left(\int_0^\tau e^{\beta \int_0^s{\mu}_r{\rm d}r}|g(s,0,V_s)|{\rm d}s\right)^2\right]\leq\E\left[\left(\int_0^\tau e^{\beta \int_0^s{\mu}_r{\rm d}r}\left(|g(s,0,0)|+{\nu}_s|V_s|\right){\rm d}s\right)^2\right]\nonumber\\
&\ \ \leq 2\E\left[\left(\int_0^\tau e^{\beta \int_0^s{\mu}_r{\rm d}r}|g(s,0,0)|{\rm d}s\right)^2\right]+2\E\left[\left(\int_0^\tau e^{ \int_0^sa_r{\rm d}r}|V_s|e^{- \int_0^s\frac{\rho}{2} {\nu}_r^2{\rm d}r}{\nu}_s{\rm d}s\right)^2\right]\nonumber\\
&\ \ \leq 2\E\left[\left(\int_0^\tau e^{\beta\int_0^s{\mu}_r{\rm d}r}|g(s,0,0)|{\rm d}s\right)^2\right]+2\E\left[\int_0^\tau e^{2 \int_0^sa_r{\rm d}r}|V_s|^2{\rm d}s\right]\E\left[\int_0^\tau e^{-\int_0^s\rho {\nu}_r^2{\rm d}r}{\nu}_s^2{\rm d}s\right]\nonumber\\
&\ \ \leq 2\E\left[\left(\int_0^\tau e^{\beta\int_0^s{\mu}_r{\rm d}r}|g(s,0,0)|{\rm d}s\right)^2\right]+\frac{2}{\rho} \E\left[\int_0^\tau e^{2 \int_0^sa_r{\rm d}r}|V_s|^2{\rm d}s\right]<+\infty.
\end{align}
Hence \ref{A:H1'} is true for $g(t,y,V_t)$. It follows from \ref{A:H5} that for each $n\geq1$, $r\in \R_+$ and $t\in[0,\tau]$,
\begin{align}\label{334}
\begin{split}
\overline{\psi}_{r}^{\alpha_\cdot}(t):&=\sup_{|y|\leq r\alpha_t}\left\{ \left|g(t,y,V_t)-g(t,0,V_t)\right|\right\}\\
&=\sup_{|y|\leq r\alpha_t}\left\{ \left|g(t,y,V_t)-g(t,y,0)+g(t,y,0)-g(t,0,0)+g(t,0,0)-g(t,0,V_t)\right|\right\}\\
&\leq2\nu_t|V_t|+\sup_{|y|\leq r\alpha_t}\left\{ \left|g(t,y,0)-g(t,0,0)\right|\right\}=2\nu_t|V_t|+{\psi}_{r}^{\alpha_\cdot}(t).
\end{split}
\end{align}
Thus combining \eqref{334}, \eqref{0.35} and \ref{A:H3} of $g$, we deduce that \ref{A:H3'} is also true for $g(t,y,V_t)$. According to \cref{pro:2.1}, BSDE \eqref{BSDE3.1} has a unique weighted $L^2$-solution $(y_t,z_t)_{t\in[0,\tau]}$ in the space of $H_\tau^2(\beta\mu_\cdot;\R^{k}\times\R^{k\times d})$, and blow we prove that $(y_t,z_t)_{t\in[0,\tau]}$ belongs also to the space $H_\tau^2(a_\cdot;\R^{k}\times\R^{k\times d})$.\vspace{0.2cm}

Applying It\^{o}'s formula to $e^{\beta\int_{0}^{t}\mu_s{\rm d}s}|y_t|$ yields that
\begin{align}\label{0.37}
\begin{split}
e^{\beta\int_{0}^{t\wedge\tau}\mu_s{\rm d}s}|y_{t\wedge\tau}|
\leq& e^{\beta\int_{0}^{\tau}\mu_s{\rm d}s}|\xi|+\int_{t\wedge\tau}^{\tau} e^{\beta\int_{0}^{s}\mu_r{\rm d}r}\left(\left<\hat{y}_s,g(s,y_s,V_s)\right>-\beta \mu_s|y_s|\right){\rm d}s\\
&-\int_{t\wedge\tau}^{\tau} e^{\beta \int_{0}^{s}\mu_r{\rm d}r}\langle \hat{y}_s,z_s{\rm d}B_s\rangle, \  \ t\geq0.
\end{split}
\end{align}
Using assumptions \ref{A:H4} and \ref{A:H5} we deduce that
\begin{align}\label{0.38}
\begin{split}
\left<\hat{y}_t,g(t,y_t,V_t)\right>-\beta \mu_t|y_t|\leq \nu_t|V_t|+|g(t,0,0)|, \ t\in[0,\tau].
\end{split}
\end{align}
By virtue of $(y_t,z_t)_{t\in[0,\tau]}\in H_\tau^2(\beta\mu_\cdot;\R^{k}\times\R^{k\times d})$ and the BDG inequality, we can derive that
$$\left(\int_0^{t\wedge\tau} e^{\beta \int_{0}^{s}\mu_r{\rm d}r}\langle \hat{y}_s,z_s{\rm d}B_s\rangle\right)_{t\geq0}$$
is a uniformly integrable martingale. In fact, in view of Theorem 1 in \cite{Ren2008BDG}, we deduce that
\begin{align}\label{2.06*}
\begin{split}
&\E\left[\sup_{t\in[0,\tau]}\left|\int_0^{t\wedge\tau} e^{\beta \int_{0}^{s}\mu_r{\rm d}r}\langle \hat{y}_s,z_s{\rm d}B_s\rangle\right|\right]
\leq 2\sqrt{2}\E\left[\left(\int_0^\tau e^{2\beta \int_{0}^{s}\mu_r{\rm d}r}|z_s|^2{\rm d}s\right)^{\frac{1}{2}}\right]<+\infty.
\end{split}
\end{align}
It follows from \eqref{0.37}, \eqref{0.38} and \eqref{2.06*} that for each $t\geq0$,
\begin{align}\label{0.39}
\begin{split}
e^{\beta\int_{0}^{t\wedge\tau}\mu_s{\rm d}s}|y_{t\wedge\tau}|\leq\E\left[e^{\beta\int_{0}^{\tau}\mu_s{\rm d}s}|\xi|\bigg|\F_{t\wedge\tau}\right] +\E\left[\int_{t\wedge\tau}^{\tau}e^{\beta\int_{0}^{s}\mu_r{\rm d}r}\left(\nu_s|V_s|+|g(s,0,0)|\right){\rm d}s\bigg|\F_{t\wedge\tau}\right].
\end{split}
\end{align}
Multiplying  $e^{\int_{0}^{t\wedge\tau}\frac{\rho}{2}\nu_s^2{\rm d}s}$ at both sides of the above equation and using H${\rm \ddot{o}}$lder's inequality, we have for each $t\geq0$,
\begin{align}\label{0.40}
\begin{split}
&e^{\int_{0}^{t\wedge\tau}a_s{\rm d}s}|y_{t\wedge\tau}|=e^{\int_{0}^{t\wedge\tau}(\beta{\mu}_s+\frac{\rho}{2}\nu_s^2){\rm d}s}|y_{t\wedge\tau}|
\\
&\ \ \leq \E\left[e^{\int_{0}^{\tau} a_s{\rm d}s}|\xi|\bigg|\F_{t\wedge\tau}\right]
+\E\left[\int_{t\wedge\tau}^{\tau} e^{\beta\int_{0}^{s}\mu_r{\rm d}r}\left(\nu_s|V_s|+|g(s,0,0)|\right){\rm d}s\cdot e^{\int_{0}^{t\wedge\tau}\frac{\rho}{2}\nu_s^2{\rm d}s}\bigg|\F_{t\wedge\tau}\right]\\
&\ \ \leq \E\left[e^{\int_{0}^{\tau} a_s{\rm d}s}|\xi|\bigg|\F_{t\wedge\tau}\right]+\E\left[\int_0^\tau e^{\int_{0}^{s} a_r{\rm d}r}|g(s,0,0)|{\rm d}s\bigg|\F_{t\wedge\tau}\right]\\
&\hspace{0.6cm}+\E\left[\sqrt{\int_{t\wedge\tau}^{\tau}e^{2\int_{0}^{s} a_r{\rm d}r}|V_s|^2{\rm d}s\int_{t\wedge\tau}^{\tau}e^{-\int_{0}^{s} \rho \nu_r^2{\rm d}r}\nu_s^2{\rm d}s\cdot e^{\int_{0}^{t\wedge\tau} \rho\nu_s^2{\rm d}s}}\bigg|\F_{t\wedge\tau}\right]\\
&\ \ \leq \E\left[e^{\int_{0}^{\tau} a_s{\rm d}s}|\xi|\bigg|\F_{t\wedge\tau}\right]+\E\left[\int_0^\tau e^{\int_{0}^{s} a_r{\rm d}r}|g(s,0,0)|{\rm d}s\bigg|\F_{t\wedge\tau}\right]\\
&\hspace{0.6cm}+\E\left[\sqrt{\frac{1}{\rho} \int_{0}^{\tau}e^{2\int_{0}^{s} a_r{\rm d}r}|V_s|^2{\rm d}s}\bigg|\F_{t\wedge\tau}\right].
\end{split}
\end{align}
Then, using the last inequality, the assumptions of $\xi$, $V_\cdot$, and $g(t,0,0)$, and Doob's martingale inequality, we can deduce that
\begin{align*}
\begin{split}
&\E\left[\sup_{s\in[0,\tau]}\left(e^{2\int_0^s a_r{\rm d}r}|y_s|^2\right)\right]\\
&\ \ \leq3\E\left[e^{2\int_{0}^{\tau} a_s{\rm d}s}|\xi|^2+\left(\int_0^\tau e^{\int_{0}^{s} a_r{\rm d}r}|g(s,0,0)|{\rm d}s\right)^2+\frac{1}{\rho}\int_{0}^{\tau}e^{2\int_{0}^{s} a_r{\rm d}r}|V_s|^2{\rm d}s\right]<+\infty.
\end{split}
\end{align*}
Thus, $y_\cdot\in S^2_\tau(a_\cdot;\R^{k})$. Furthermore, in view of \eqref{0.38} together with $y_\cdot\in S^2_\tau(a_\cdot;\R^{k})$ and the assumptions of $V_\cdot$ and $g(t,0,0)$, similar to the proof of \eqref{2.2} and \eqref{0032.2} in \cref{pro:1.1} with $r=t=0$, we can obtain that for each $n\geq1$,
\begin{align}\label{32.82}
\E\left[\int_{0}^{\tau_n}e^{2 \int_{0}^{s}a_r{\rm d}r}|z_s|^2{\rm d}s\right]
\leq&
\E\left[\sup_{s\in[0,\tau_n]}\left(e^{2 \int_{0}^{s}a_r{\rm d}r}|y_s|^2\right)\right] +\frac{1}{\rho} \E\left[\int_{0}^{\tau_n}e^{2\int_0^s a_r{\rm d}r}|V_s|^2{\rm d}s\right]\nonumber\\
&+2\E\left[\int_{0}^{\tau_n}e^{2\int_0^s a_r{\rm d}r}|y_s||g(s,0,0)|{\rm d}s\right]\nonumber\\
\leq&
2\E\left[\sup_{s\in[0,\tau]}\left(e^{2\int_0^s a_r{\rm d}r}|y_s|^2\right)\right]+\frac{1}{\rho} \E\left[\int_{0}^{\tau}e^{2\int_0^s a_r{\rm d}r}|V_s|^2{\rm d}s\right]\nonumber\\
&+\E\left[\left(\int_{0}^{\tau}e^{\int_0^s a_r{\rm d}r}|g(s,0,0)|{\rm d}s\right)^2\right]<+\infty.
\end{align}
Finally, by taking the limit with respect to $n$ in the above inequality, the use of Levi's lemma yields $z_\cdot\in M_\tau^2(a_\cdot;\R^{k\times d})$. Hence, $(y_\cdot,z_\cdot)\in H_\tau^2(a_\cdot;\R^{k}\times\R^{k\times d})$.\vspace{0.2cm}

{\bf Second Step:} By the fixed point theorem, we will prove that under assumptions \ref{A:H1}-\ref{A:H5} of $g$, for each $\xi\in L_\tau^2(a_\cdot;\R^k)$, BSDE \eqref{BSDE1.1} admits a unique weighted $L^2$-solution $(y_\cdot,z_\cdot)\in H_\tau^2(a_\cdot;\R^{k}\times\R^{k\times d})$. In fact, based on the proof of the first step, we know that for any given $V_\cdot\in M_\tau^2(a_\cdot;\R^{k\times d})$, there exists a weighted $L^2$-solution $(y_t,z_t)_{t\in[0,\tau]}\in S^2_\tau(a_\cdot;\R^{k})\times M_\tau^2(a_\cdot;\R^{k\times d})$ of BSDE \eqref{BSDE3.1}. Thus, we choose the $z_\cdot$ in the solution to be the image of $V_\cdot$ and  construct a mapping:
\begin{align*}
\begin{split}
\Phi: M_\tau^2(a_\cdot;\R^{k\times d})&\rightarrow M_\tau^2(a_\cdot;\R^{k\times d})\\
V_\cdot&\rightarrow z_\cdot.
\end{split}
\end{align*}
Now, suppose that for each $i=1,2$, $V_\cdot^i\in M_\tau^2(a_\cdot;\R^{k\times d})$, and $z_\cdot^i:=\Phi(V_\cdot^i)$. Denote
\begin{align*}
\begin{split}
&\overline{V_t}:=V_t^1-V_t^2, \ \overline{y}_t:=y_t^1-y_t^2, \ \overline{z}_t:=z_t^1-z_t^2, \ t\in [0,\tau].
\end{split}
\end{align*}
Applying It\^{o}'s formula to $e^{2 \int_{0}^{t}a_s{\rm d}s}|\overline{y}_t|^2$ and using \ref{A:H4} and \ref{A:H5}, we deduce that
\begin{align}\label{33.9}
\begin{split}
&|\overline{y}_0|^2+\int_0^{\tau} e^{2\int_0^s a_r{\rm d}r}|\overline{z}_s|^2{\rm d}s+2\int_0^{\tau} e^{2 \int_0^s a_r{\rm d}r}a_s|\overline{y}_s|^2{\rm d}s\\
&\ \ = 2\int_0^{\tau} e^{2\int_0^s a_r{\rm d}r}\langle \overline{y}_s,g(s,y_s^1,V_s^1)-g(s,y_s^2,V_s^2)\rangle{\rm d}s-2\int_0^{\tau} e^{2\int_0^s a_r{\rm d}r}\langle \overline{y}_s,\overline{z}_s{\rm d}B_s\rangle\\
&\ \ \leq \int_0^{\tau} e^{2\int_0^s a_r{\rm d}r}\left(2\mu_s|\overline{y}_s|^2+2|\overline{y}_s|\nu_s|\overline{V_s}|\right){\rm d}s-2\int_0^{\tau} e^{2\int_0^s a_r{\rm d}r}\langle \overline{y}_s,\overline{z}_s{\rm d}B_s\rangle.
\end{split}
\end{align}
Considering $2ab\leq \rho a^2+\frac{1}{\rho} b^2$, we know that for each $s\in[0,\tau]$,
\begin{align}\label{3.9}
2|\overline{y}_s|\nu_s|\overline{V_s}|\leq\rho \nu^2_s|\overline{y}_s|^2+\frac{1}{\rho}|\overline{V_s}|^2.
\end{align}
Moreover, it then follows from the BDG inequality that $$\left(\int_0^{t\wedge\tau} e^{2\int_0^s a_r{\rm d}r}\langle \overline{y}_s,\overline{z}_s{\rm d}B_s\rangle\right)_{t\geq0}$$ is a uniformly integrable martingale. Indeed, in view of Theorem 1 in \cite{Ren2008BDG}, we have
\begin{align}\label{2.06}
\begin{split}
&2\E\left[\sup_{t\in[0,\tau]}\left|\int_0^{t\wedge\tau} e^{2\int_0^s a_r{\rm d}r}\langle \overline{y}_s,\overline{z}_s{\rm d}B_s\rangle\right|\right]
\leq 4\sqrt{2}\E\left[\left(\int_0^\tau e^{4 \int_{0}^{s}a_r{\rm d}r}|\overline{y}_s|^2|\overline{z}_s|^2{\rm d}s\right)^{\frac{1}{2}}\right]\\
&\ \ \leq  \frac{1}{2}\E\left[\sup_{s\in[0,\tau]}\left(e^{2 \int_{0}^{s}a_r{\rm d}r}|\overline{y}_s|^2\right)\right]+64\E\left[\int_0^\tau e^{2\int_0^s a_r{\rm d}r}|\overline{z}_s|^2{\rm d}s\right]<+\infty.
\end{split}
\end{align}
According to \eqref{3.9} and \eqref{2.06}, by taking the mathematical expectation in both sides of \eqref{33.9} we get
\begin{align*}\label{339}
\begin{split}
\E\left[\int_0^{\tau} e^{2\int_0^s a_r{\rm d}r}|\overline{z}_s|^2{\rm d}s\right]+(2\beta-2) \E\left[\int_0^{\tau} e^{2\int_0^s a_r{\rm d}r}\mu_s|\overline{y}_s|^2{\rm d}s\right]
\leq  \frac{1}{\rho}\E\left[ \int_0^{\tau} e^{2\int_0^s a_r{\rm d}r}|\overline{V_s}|^2{\rm d}s\right].
\end{split}
\end{align*}
Hence, $\Phi$ is a strict contraction in $M_\tau^2(a_\cdot;\R^{k\times d})$, and then has a unique fixed point $z_\cdot\in M_\tau^2(a_\cdot;\R^{k\times d})$, i.e., $\Phi(z_\cdot)=z_\cdot$. It implies that according to the definition of mapping $\Phi$ and the assertion in first step, there must exist $y_\cdot\in S^2_\tau(a_\cdot;\R^{k})$ such that $(y_\cdot,z_\cdot)\in S^2_\tau(a_\cdot;\R^{k})\times M_\tau^2(a_\cdot;\R^{k\times d})$ is the unique weighted $L^2$-solution of BSDE \eqref{BSDE1.1}. The existence part of Theorem \ref{thm:3.1} is then proved.
\end{proof}

\begin{rmk}\label{rmk:4.4}
In \cref{thm:3.1}, we want to find a solution of BSDE \eqref{BSDE1.1} in the weighted space of $H_\tau^2(a_\cdot;\R^{k}\times\R^{k\times d})$, but based on \cref{pro:2.1} we can only conclude that BSDE \eqref{BSDE3.1} admits a solution $(y_\cdot,z_\cdot)$ in the weighted space of $H_\tau^2(\beta\mu_\cdot;\R^{k}\times\R^{k\times d})$ when $\xi\in L^2_\tau(a_\cdot;\R^{k})$ and $V_\cdot\in M_\tau^2(a_\cdot;\R^{k\times d})$. Consequently, how to prove that $(y_\cdot,z_\cdot)$ also belongs to $H_\tau^2(a_\cdot;\R^{k}\times\R^{k\times d})$ is the key point in first step of the proof. This is quite different from that in usual case of $\int_0^\tau a_t{\rm d}t\leq M$ for some constant $M>0$, in which both spaces $H_\tau^2(a_\cdot;\R^{k}\times\R^{k\times d})$ and  $H_\tau^2(\beta\mu_\cdot;\R^{k}\times\R^{k\times d})$ are identical to the space $H_\tau^2(0;\R^{k}\times\R^{k\times d})$. Consequently, the arguments from \eqref{0.37} and \eqref{32.82} is necessary and innovative. In the second step of the proof, by the assertion in the first step, we successfully construct a contract mapping $\Phi$ from $M_\tau^2(a_\cdot;\R^{k\times d})$ to itself such that $\Phi(V_\cdot)=z_\cdot$, rather than a contract mapping $\Psi$ from $H_\tau^2(a_\cdot;\R^{k}\times\R^{k\times d})$ to itself such that $\Psi(U_\cdot,V_\cdot)=(y_\cdot,z_\cdot)$ as usually done in existing literature. This difference leads directly to enlargement of rang of $\beta$, i.e., $\beta\geq1$ in our framework rather than $\beta>\frac{1+\sqrt{5}}{2}$ in \cite{Li2023}.
\end{rmk}
\setlength{\bibsep}{2pt}
\bibliographystyle{model5-names}

\begin{thebibliography}{99}
\bibliographystyle{plainnat}
\bibitem[Aman et al.(2012)]{Aman2012}
Aman, A., Elouaflin, A. and N'zi, M., Reflected generalized BSDEs with random time and applications, Afr. Diaspora J. Math., 2012, 14(1): 83-105.


\bibitem[Bahlali et al.(2004)]{Bahlali2004}
Bahlali, K., Elouaflin, A. and N'zi, M., Backward stochastic differential equations with stochastic monotone coefficients, J. Appl. Math. Stoch. Anal., 2004, 4: 317-335.


\bibitem[Bahlali et al.(2015)]{Bahlali2015}
Bahlali, K., Essaky, E. and Hassani, M., Existence and uniqueness of multidimensional BSDEs and of systems of degenerate PDEs with superlinear growth generator, SIAM J. Math. Anal., 2015, 47(6): 4251-4288.

\bibitem[Bender and Kohlmann(2000)]{BenderKohlmann2000}
Bender, C. and Kohlmann, M., BSDEs with stochastic Lipschitz condition, http://hdl.handle.net/10419/85163, 2000.

\bibitem[Bismut(1973)]{Bismut1973}
Bismut, J., Conjugate convex functions in optimal stochastic control. J. Math. Anal. Appl., 1973, 44: 384-404.

\bibitem[Briand and Carmona(2000)]{BriandCarmona2000}
Briand, Ph. and Carmona, R., BSDE with polynimial growth generators. J. Appl. Math. Stochastic Anal., 2000, 13(3): 207-238.


\bibitem[Briand and Confortola(2008)]{BriandandConfortola2008}
Briand, Ph. and Confortola, F., BSDEs with stochastic Lipschitz condition and quadratic PDEs in Hilbert spaces, Stochastic Process. Appl., 2008, 118(5): 818-838.


\bibitem[Briand et al.(2003)]{Briand2003}
Briand, Ph., Delyon, B., Hu, Y., Pardoux, E. and Stoica, L., $L^p$ solutions of backward stochastic differential equations, Stochastic Process. Appl., 2003, 108: 109-129.


\bibitem[Briand and Hu(1998)]{BriandandHu1998}
Briand, Ph. and Hu, Y., Stability of BSDEs with random terminal time and homogenization of semilinear elliptic PDEs, J. Funct. Anal., 1998, 155(2): 455-494.


\bibitem[Chen(1998)]{Chen1998}
Chen, Z., Existence and uniqueness for BSDE with stopping time(English summary),
Chinese Sci. Bull., 1998, 43(2), 96-99.


\bibitem[Chen and Wang(2000)]{ZChenBWang2000JAMS}
Chen, Z. and Wang, B., Infinite time interval BSDEs and the convergence of $g$-martingales, J. Austral. Math. Soc. Ser. A., 2000, 69(2): 187-211.

\bibitem[{Crandall et~al.(1992)}]{CrandallIshiiLions1992BAMS}
Crandall, M.G., Ishii, H., Lions, P.L., 1992. User's guide to viscosity solutions of second order partial differential equations, Bull. Am. Math. Soc., 27:1--67.

\bibitem[Darling and Pardoux(1997)]{DarlingandPardoux1997}
Darling, R. and Pardoux, E., Backwards SDE with random terminal time and applications to semilinear elliptic PDE, Ann. Probab., 1997, 25(3): 1135-1159.


\bibitem[Delbaen and Tang(2010)]{DelbaenTang2010}
Delbaen, F. and Tang, S., Harmonic analysis of stochastic equations and backward stochastic differential equations, Probab. Theory Related Fields, 2010, 146(1-2): 291-336.

\bibitem[El Karoui and Huang(1997)]{KarouiHuang1997}
El Karoui, N. and Huang, S., A general result of existence and uniqueness of backward stochastic differential equations, Backward Stochastic Differential Equations (Paris, 1995-1996), 27-36, Pitman Research Notes
in Mathematics Series, 364, Longman, Harlow, London, UK, 1997.

\bibitem[El Karoui et al.(1997)]{KarouiPengQuenez1997}
El Karoui, N. Peng, S. and Quenez, M., Backward stochastic differential equations in finance, Math. Finance, 1997, 7(1): 1-71.

\bibitem[Fan(2015)]{Fan2015JMAA}
Fan, S., $L^p$ solutions of multidimensional BSDEs with weak monotonicity and general growth generators, J. Math. Anal. Appl., 2015, 432(1): 156-178.



\bibitem[Fan and Hu(2021)]{FanHu2021SPA}
Fan, S. and Hu, Y., Well-posedness of scalar BSDEs with sub-quadratic generators and related PDEs, Stochastic Process. Appl., 2021, 131: 21-50.

\bibitem[Fan and Jiang(2013)]{FanJiang2013}
Fan, S. and Jiang, L., Multidimensional BSDEs with weak monotonicity and general growth generators, Acta Math. Sin. (Engl. Ser.), 2013, 29(10): 1885-1906.



\bibitem[Hu(2012)]{HuLanYing(2012)}
 Hu, L., Reflected backward doubly stochastic differential equations driven by a L\'{e}vy process with stochastic Lipschitz condition, Appl. Math. Comput., 2012, 219(3): 1153-1157.


 \bibitem[Hu and Ren(2011)]{HuandRen2011}
 Hu, L. and Ren, Y., A note on the reflected backward stochastic differential equations driven by a L\'{e}vy process with stochastic
Lipschitz condition, Appl. Math. Comput., 2011, 218: 4325-4332.


\bibitem[Hu and Imkeller(2005)]{HuYing2005}
 Hu, Y., Imkeller, P. and M\"{u}ller, M., Utility maximization in incomplete markets, Ann. Appl. Probab., 2005, 15(3): 1691-1712.

\bibitem[Hu and Peng(1997)]{HuPeng1997}
Hu, Y. and Peng, S., A stability theorem of backward stochastic differential equations and its application, C. R. Acad. Sci. Paris, ser.I, 1997, 324(9): 1059-1064.


\bibitem[Li et al.(2021)]{LiXuFan2021PUQR}
Li, T., Xu, Z. and Fan, S., General time interval multidimensional BSDEs with generators satisfying a weak stochastic-monotonicity condition, Probab. Uncertain. Quant. Risk, 2021, 6(4): 301-318.


\bibitem[Li and Fan(2023)]{LiFan2023CSTM}
Li, X. and Fan, S., $L^p$ solutions of general time interval BSDEs with generators satisfying a  $p$-order weak stochastic-monotonicity condition, Comm. Statist. Theory Methods, 2023, 52(16): 5650-5676.


\bibitem[Li et al.(2023)]{Li2023}
Li, X., Lai, Y. and Fan, S., BSDEs with stochastic Lipschitz condition: A general result, Probab. Uncertain. Quant. Risk, 2023, 8(2): 267-280.


\bibitem[Lin et al.(2020)]{Lin2020}
Lin, Y., Ren, Z., Touzi, N. and Yang, J., Second order backward SDE with random terminal time, Electron. J. Probab, 2020, 25(99): 1-43.


\bibitem[Liu et al.(2020)]{Liu2020}
Liu, Y., Li, D. and Fan, S., $L^p(p>1)$ solutions of BSDEs with generators satisfying some non-uniform conditions in $t$ and $\omega$, Chin. Ann. Math. Ser. B., 2020, 41(3): 479-494.


\bibitem[Marzougue and El Otmani(2017)]{MarzougueandEl Otmaniv2017}
Marzougue, M. and El Otmani, M., Double barrier reflected BSDEs with stochastic Lipschitz coefficient, Mod. Stoch. Theory Appl., 2017, 4(4): 353-379.


\bibitem[O et al.(2020)]{O2020}
O, H., Kim, M.-C. and Pak, C.-K., A framework of BSDEs with stochastic Lipschitz coeffidients, ESAIM Probab. Stat., 2020, 24:739-769.

\bibitem[{\O}ksendal(2005)]{Oksendal2005}
{\O}ksendal, B., Stochastic Different Equations, Springer, 2005.


\bibitem[Owo(2015)]{Owo(2015)}
Owo, J., Backward doubly stochastic differential equations with stochastic Lipschitz condition, Statist. Probab. Lett., 2015, 96: 75-84.


\bibitem[Owo(2017)]{Owo(2017)}
Owo, J., $L^p$-solutions of backward doubly stochastic differential equations with stochastic Lipschitz condition and $p \in (1,2)$, ESAIM Probab. Stat., 2017, 21: 168-182.


\bibitem[Pardoux(1999)]{Pardoux1999}
Pardoux, E., BSDEs, weak convergence and homogenization of semilinear PDEs, in Nonlinear Analysis, Differential Equations and Control, eds. F. Clarke and R. Stern
(Kluwer, 1999), pp. 503-549.


\bibitem[Pardoux and Peng(1990)]{PardouxPeng1990SCL}
Pardoux, E. and Peng, S., Adapted solution of a backward stochastic differential equation, Syst. Control Lett., 1990, 14(1): 55-61.


\bibitem[Pardoux and R\u{a}\c{s}canu(2014)]{PardouxandRascanu(2014)}
Pardoux, E. and R\u{a}\c{s}canu, A., Stochastic differential equations, Backward SDEs, Partial differential equations, Springer, Cham, 2014.


\bibitem[Perninge(2023)]{Perninge2023}
Perninge, M., A Note on Reflected BSDEs in Infinite Horizon with Stochastic Lipschitz Coefficients, Stoch. Anal. Appl., 2024, 42(5): 945-962.


\bibitem[Peng(1991)]{Peng1991SSR}
Peng, S., Probabilistic interpretation for systems of quasilinear parabolic partial differential equations, Stochastics Stochastics Rep., 1991, 37(1-2): 61-74.



\bibitem[Ren(2008)]{Ren2008BDG}
Ren, Y., On the Burkholder-Davis-Gundy inequalities for continuous martingales, Statist. Probab. Lett., 2008, 78(17): 3034-3039.


\bibitem[Royer(2004)]{Royer2004}
Royer, M., BSDEs with a random terminal time driven by a monotone generator and their links with PDEs, Stoch. Stoch. Rep., 2004, 76(4): 281-307.


\bibitem[Tang and Li(1994)]{Tang1994}
Tang, S. and Li, X., Necessary conditions for optimal control of stochastic systems with random jumps, SIAM J. Control Optim., 1994, 32(5): 1447-1475.

\bibitem[Tian(2023)]{Tian2023SIAM}
Tian, D., Pricing principle via Tsallis relative entropy in incomplete markets, SIAM J. Financial Math., 2023, 14(1): 250-278.

\bibitem[Wang et al.(2007)]{WangRanChen2007}
Wang, J., Ran, Q. and Chen, Q., $L^p$ solutions of BSDEs with stochastic Lipschitz condition, J. Appl. Math. Stoch. Anal., 2007, article ID: 78196.


\bibitem[Wen(2011)]{Wen(2011)}
Wen, L., Reflected BSDEs with a stochastic Lipschitz coefficient, J. Shandong Univ. Nat. Sci., 2011, 46(6): 79-83.


\bibitem[Xiao and Fan(2017)]{XiaoandFan2017}
Xiao, L. and Fan, S., General time interval BSDEs under the weak monotonicity condition and nonlinear decomposition for general g-supermartingales, Stochastics, 2017, 89(5): 786-816.



\bibitem[Xiao et al.(2015)]{Xiao2015}
Xiao, L., Fan, S. and Xu, N., $L^p(p>1)$  solutions of multidimensional BSDEs with monotone generators in general time intervals, Stoch. Dyn., 2015, 15(1): 1-34.

\bibitem[Yong(2006)]{Yong2006}
Yong, J., Completeness of security markets and solvability of linear backward stochastic differential equations, J. Math. Anal. Appl., 2006, 319(1): 333-356.



\end{thebibliography}
\biboptions{authoryear}

\end{document}